\crefname{thm}{theorem}{theorems}
\crefname{prop}{proposition}{propositions}
\newcommand{\id}{\mathds{1}}
\newcommand{\Rmnum}[1]{\expandafter\@slowromancap\romannumeral #1@}
\DeclareMathOperator\ord{ord} 
\DeclareMathOperator\pole{pole}
\newcommand{\scrE}{\ensuremath{\mathcal{E}}}
\newcommand{\scrG}{\ensuremath{\mathcal{G}}}
\newcommand{\cc}{\ensuremath{\mathbb{C}}}
\newcommand{\GG}{\ensuremath{\mathbb{G}}}
\newcommand{\pp}{\ensuremath{\mathbb{P}}}
\newcommand{\qq}{\ensuremath{\mathbb{Q}}}
\newcommand{\zz}{\ensuremath{\mathbb{Z}}}
\newcommand{\torus}{\ensuremath{\mathbb{T}}}
\newcommand{\ntorus}{(\cc^*)^n}
\newcommand{\WP}{\ensuremath{{\bf W}\pp}}
\newcommand{\im}{\ensuremath{\Rightarrow}}
\newtheorem{thm}{Theorem}[section]
\newtheorem*{thm*}{Theorem}
\newtheorem{lemma}[thm]{Lemma}
\newtheorem*{lemma*}{Lemma}
\newtheorem{prop}[thm]{Proposition}
\newtheorem*{prop*}{Proposition}
\newtheorem{cor}[thm]{Corollary}
\newtheorem{claim}[thm]{Claim}
\newtheorem*{claim*}{Claim}
\newtheorem*{conjecture*}{Conjecture}
\theoremstyle{definition}
\newtheorem*{constrinition*}{Construction-Definition}
\newtheorem*{convention*}{Convention}
\newtheorem{defn}[thm]{Definition}
\newtheorem*{defn*}{Definition}
\newtheorem*{definotation*}{Definition-Notation}
\newtheorem*{example*}{Example}
\newtheorem*{fact*}{Fact}
\newtheorem*{facts*}{Facts}
\newtheorem*{bold-note*}{Note}
\newtheorem{problem}[thm]{Problem}
\newtheorem*{problem*}{Problem}
\newtheorem{bold-question}[thm]{Question}
\newtheorem*{bold-question*}{Question}
\newtheorem{rem}[thm]{Remark}
\newtheorem*{reminition*}{Remark-Definition}
\newtheorem{remexample*}{Remark-Example}
\newtheorem*{remtation*}{Remark-Notation}
\newtheorem*{remuestion*}{Remark-Question}
\newtheorem*{remvention*}{Remark-Convention}
\theoremstyle{remark}
\newtheorem*{rem*}{Remark}
\newtheorem*{note*}{Note}
\newtheorem*{notation*}{Notation}
\newtheorem*{question*}{Question}
\newtheorem*{questions*}{Questions}
\newcounter{UnorderedProofTempCtr}
\newcommand{\tempcommand}{}
\author{Pinaki Mondal}
\title{Normal equivariant compactifications of $\mathbb{G}^2_a$ with Picard number one}
\DeclareMathOperator{\aut}{Aut}
\renewcommand{\torus}{\cc^*}
\newcommand{\gtwoa}{\GG^2_a}
\newcommand{\kbarX}{k_{\bar X}}
\newcommand{\baromegastar}{\bar \omega^*}
\newcommand{\xomegatheta}{\bar X_{\vec \omega,\vec \theta}}
\newcommand{\xomegathetaprime}{\bar X_{\vec \omega,\vec \theta'}}
\newcommand{\momega}{m_{\vec \omega}}
\newlist{prooflist}{enumerate}{3}
\setlist[prooflist,1]{label=(\roman*)}
\setlist[prooflist,2]{label=(\arabic*), ref=(\roman{prooflisti}.\arabic*)}
\setlist[prooflist,3]{label=(\alph*),  ref=(\roman{prooflisti}.\arabic{prooflistii}.\alph*)}
\newlist{defnlist}{enumerate}{3}
\setlist[defnlist,1]{label=(\alph*)}
\setlist[defnlist,2]{label=(\arabic*), ref=(\alph{defnlisti}.\arabic*)}
\setlist[defnlist,3]{label=(\roman*), ref=(\alph{defnlisti}.\arabic{defnlistii}.\roman*)}
\begin{document}

\begin{abstract} 
We classify all normal $\GG^2_a$-surfaces with Picard number one, and characterize which of these surfaces have at worst log canonical, and which have at worst log terminal singularities, answering a question of Hassett and Tschinkel \cite{hassett-tschinkel}. We also find all $\gtwoa$-structures on these surfaces and show that these surfaces and their minimal desingularizations have the same $\gtwoa$-structures (modulo equivalence of $\gtwoa$-actions). In particular, we show that some of these surfaces admit one dimensional moduli of $\gtwoa$-structures, answering another question of Hassett and Tschinkel \cite{hassett-tschinkel}. 
\end{abstract}

\maketitle

\section{Introduction} \label{sec-intro}

Hassett and Tchinkel \cite{hassett-tschinkel} started the study of {\em $\GG^n_a$-varieties}; these are  equivariant compactifications of $\GG^n_a$, i.e.\ $\cc^n$ with the additive group structure. They classified $\GG^n_a$-structures on projective spaces and Hirzebruch surfaces, and showed that in dimension $\geq 6$ projective spaces admit moduli of $\GG^n_a$-structures. In particular, they asked the following questions regarding the $n = 2$ case:

\begin{problem}[{\cite[Section 5.2] {hassett-tschinkel}}]
\mbox{}
\begin{enumerate}
\item \label{moduli-question} Can the $\gtwoa$-structures on a given (smooth) surface have moduli?
\item \label{log-terminal-question} Classify $\gtwoa$-structures on projective surfaces with log terminal singularities and Picard number one. 
\end{enumerate}
\end{problem}

Motivated by these questions, in this article we undertake a study of normal $\gtwoa$-surfaces with Picard rank one. In particular, we answer both these questions. \\

Indeed, every $\gtwoa$-surface of Picard rank one is trivially a {\em primitive compactification} of $\cc^2$, i.e.\ a compact complex analytic surface containing $\cc^2$ such that the curve at infinity is irreducible. In \cite{sub2-2} we gave an explicit description of automorphisms of normal primitive compactifications of $\cc^2$. Using that description, in this article we classify all normal surfaces with Picard rank one which have $\gtwoa$-structures (\cref{g2a-thm}). Moreover, we give an explicit description of all $\gtwoa$-structures on these surfaces (\cref{g2a-thm}) and of the space of $\gtwoa$-structures modulo equivalence (\cref{structure}). In particular, it turns out that some of these spaces admit one dimensional moduli of $\gtwoa$-structures. On the other hand, we show that normal primitive compactifications of $\cc^2$ have the special property that all of their automorphisms lift to automorphisms of their minimal desingularizations (\cref{min-aut}), which implies that the spaces of $\gtwoa$-structures modulo equivalence on normal $\gtwoa$-surfaces of Picard rank one and on their minimal desingularizations are isomorphic (\cref{isomorphic-structure}). In particular, it follows that some of these minimal desingularizations also admit one dimensional moduli of $\gtwoa$-structures, thereby answering question \eqref{moduli-question}. \\

In \cite{sub2-1} we gave an explicit description of minimal desingularizations of normal primitive compactifications of $\cc^2$. Combining this with Kawamata's \cite{kawamata} classification of log canonical surface singularities (we follow the description of Alexeev \cite{alexeev}) and our classification of $\gtwoa$-structures on normal surfaces of Picard rank one (\cref{g2a-thm,structure}), we immediately obtain a classification of $\gtwoa$-structures on projective surfaces with log terminal or log canonical singularities and Picard number one (\cref{tc-thm}), which answers question \eqref{log-terminal-question}. \\

Some (more precisely, four, up to isomorphism,) of the $\gtwoa$-surfaces of Picard rank one are also {\em singular del Pezzo surfaces} in the sense of Derenthal and Loughran \cite{derenthal-loughran} corresponding to dual graphs of type $A_1, A_2 + A_1, A_4$ and $D_5$ (\cref{del-Pezzo-cor}). In particular, the first two are respectively weighted projective spaces $\pp^2(1,1,2)$ and $\pp^2(1,3,2)$, and have precisely two $\gtwoa$-structures modulo equivalence. The third one admits a one dimensional moduli of $\gtwoa$-structures (modulo equivalence) - it is described in \cref{simple-section}. The other one admits only one $\gtwoa$-structure modulo equivalence. 

\section{A simple non-singular surface with one dimensional moduli of $\gtwoa$-structures.} \label{simple-section}
Let $\bar X:= \pp^2$ and $L$ be a line on $\bar X$. Blow up a point $P$ on $L$, then blow up the point where the strict transform of $L$ intersects the exceptional curve, and then blow up again the point of intersection of the strict transform of $L$ and the new exceptional curve. Finally blow up a point on the newest exceptional curve which is not on the strict transform of either $L$ or any of the older exceptional curves. Let $\bar X'$ be the resulting surface. Identifying $X := \bar X\setminus L$ with $\cc^2$, we see that $\bar X'$ is a non-singular compactification of $\cc^2$ and the `weighted dual graph' of the curve at infinity on $\bar X'$ is as in \cref{fig:simple}. 

\begin{figure}[htp]
\begin{center}
\begin{tikzpicture}
 	\pgfmathsetmacro\edge{1.5}
  	\pgfmathsetmacro\dashededge{1.75}	
 	\pgfmathsetmacro\vedge{0.75}
 	\pgfmathsetmacro\dashedvedge{1}
	\pgfmathsetmacro\vr{0.1}
 	
 	\draw (0,0) -- (2*\edge,0);
 	\fill[black] (0, 0) circle (\vr);
 	\draw (0,0)  node [below] {$L$};
 	\draw (0,0)  node [above] {$-2$};
 	
 	\fill[black] (\edge, 0) circle (\vr);
 	\draw (\edge,0)  node [below right] {$E_3$};
 	\draw (\edge,0)  node [above] {$-2$};
 	
 	\fill[black] (2*\edge, 0) circle (\vr);
 	\draw (2*\edge,0)  node [below] {$E_4$};
 	\draw (2*\edge,0)  node [above] {$-1$};
 	
 	\draw (\edge,0) -- (\edge,-2*\vedge);
	\fill[black] (\edge, -\vedge) circle (\vr);
 	\draw (\edge,-\vedge)  node [right] {$E_2$}; 	
 	\draw (\edge,-\vedge)  node [left] {$-2$};

	\fill[black] (\edge, -2*\vedge) circle (\vr);
 	\draw (\edge,-2*\vedge)  node [right] {$E_1$}; 	
 	\draw (\edge,-2*\vedge)  node [left] {$-2$};
 	 	 	
\end{tikzpicture}
\caption{Weighted dual graph of the curve at infinity on $\bar X'$}\label{fig:simple}
\end{center}
\end{figure}
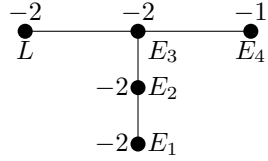
Let $\bar X''$ be the surface formed by contracting (the strict transforms of) $L$, $E_1$, $E_2$ and $E_3$. In the notation introduced in \cref{presection}, $\bar X''$ is the normal primitive compactification of $\cc^2$ corresponding to {\em key sequence} $\vec \omega := (3, 2, 5)$, and $\bar X'$ is the minimal desingularization of $\bar X''$. Choose homogeneous coordinates $[u:v:w]$ on $X$ such that $L = \{w = 0\}$ and $P$ has coordinates $[1:0:0]$. Then $(x,y) := (u/w, v/w)$ are coordinates on $X$. It follows from \cref{structure,isomorphic-structure} that the moduli (up to equivalence) of $\gtwoa$-structures on $\bar X'$ and $\bar X''$ consists of $\gtwoa$-actions $\tau_\lambda$, $\lambda \in \cc$ defined as follows:
\begin{align}
(t_1,t_2) \cdot_{\tau_\lambda} (x,y) 
	= \left(x + \lambda\left( \frac{(t_1)^2}{2} + t_1y\right)+ t_2 , y + t_1 \right) \label{tau-simple}
\end{align}
\section{Preliminaries on normal primitive compactifications of $\cc^2$} \label{presection}

A {\em primitive compactification} of $\cc^2$ is an analytic surface containing $\cc^2$ such that the curve at infinity is irreducible. In this section we recall some properties of normal primitive compactifications of $\cc^2$ from \cite{sub2-2}.

\begin{defn}[Key sequences]
\label{key-seqn}
A sequence $\vec\omega := (\omega_0, \ldots, \omega_{n+1})$, $n \in \zz_{\geq 0}$, of integers is called a {\em key sequence} if it has the following properties: 
\begin{enumerate}
\item \label{positive-1-property} $\omega_0 \geq 1$.
\item \label{gcd-1-property} Let $e_k := \gcd(|\omega_0|, \ldots, |\omega_k|)$, $0 \leq k \leq n+1$ and $\alpha_k := e_{k-1}/e_k$, $1 \leq k \leq n+1$. Then $e_{n+1} = 1$, and
\item \label{smaller-property}  $\omega_{k+1} < \alpha_k\omega_k$, $1 \leq k \leq n$.
\end{enumerate} 
Moreover, $\vec \omega$ is called {\em primitive} if $\omega_{n+1} > 0$ (or equivalently, $\omega_k > 0$ for all $k$, $0 \leq k \leq n+1$), and it is called {\em algebraic} if 
\begin{enumerate}
\setcounter{enumi}{3}
\item \label{algebraic-semigroup-property} $\alpha_k\omega_k \in \zz_{\geq 0} \langle \omega_0, \ldots, \omega_{k-1} \rangle$, $1 \leq k \leq n$.
\end{enumerate}
Finally, $\vec \omega$ is called {\em essential} if $\alpha_k \geq 2$ for $1 \leq k \leq n$. Note that
\begin{enumerate}[label=(\alph{enumi})]
\item Given an arbitrary key sequence $(\omega_0, \ldots, \omega_{n+1})$, it has an associated {\em essential subsequence} $(\omega_0, \omega_{i_1}, \ldots, \omega_{i_l}, \omega_{n+1})$ where $\{i_j\}$ is the collection of all $k$, $1 \leq k \leq n$, such that $\alpha_k \geq 2$.
\item If $\vec \omega$ is an algebraic key sequence, then its essential subsequence is also algebraic.
\end{enumerate} 
\end{defn}

\begin{rem}\label{unique-remark}
Let $\vec\omega := (\omega_0, \ldots, \omega_{n+1})$ be a key sequence. It is straightforward to see that property \ref{smaller-property} implies the following: for each $k$, $1 \leq k \leq n$, $\alpha_k\omega_k$ can be {\em uniquely} expressed in the form $\alpha_k\omega_k = \beta_{k,0}\omega_0 + \beta_{k,1} \omega_1 + \cdots + \beta_{k,k-1}\omega_{k-1}$, where $\beta_{k,j}$'s are integers such that $0 \leq \beta_{k,j} < \alpha_j$ for all $j \geq 1$. If $\vec \omega$ is in additional {\em algebraic}, then $\beta_{k,0}$'s of the preceding sentence are {\em non-negative}.
\end{rem}

\begin{defn} \label{omega-theta-defn}
Let $\vec\omega:= (\omega_0, \ldots, \omega_{n+1})$ be a key sequence and $\vec \theta \in \ntorus$. Let $\WP$ be the weighted projective space $\pp^{n+2}(1, \omega_0, \omega_1, \ldots, \omega_{n+1})$ with (weighted) homogeneous coordinates $[w:y_0: \cdots :y_{n+1}]$. We write $\xomegatheta$ for the subvariety of $\WP$ defined by weighted homogeneous polynomials $G_k$, $1 \leq k \leq n$, given by
\begin{gather} \label{projective-definition}
G_k := w^{\alpha_k\omega_k - \omega_{k+1}}y_{k+1} - \left( y_k^{\alpha_k} - \theta_k \prod_{j=0}^{k-1}y_j^{\beta_{k,j}}\right)
\end{gather}
where $\alpha_k$'s and $\beta_{k,j}$'s are as in Remark \ref{unique-remark}. 
\end{defn}

\begin{prop}[{\cite[Proposition 3.4]{sub2-2}}]\label{projective-embedding}
If $\vec \omega$ is primitive and algebraic, then $\xomegatheta$ is a normal primitive algebraic compactification of $\cc^2$. Conversely, every normal primitive algebraic compactification of $\cc^2$ is isomorphic to $\xomegatheta$ for some primitive algebraic key sequence $\vec\omega:= (\omega_0, \ldots, \omega_{n+1})$ and $\vec \theta \in \ntorus$ for some $n \geq 0$. 
\end{prop}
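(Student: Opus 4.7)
The plan is to prove the two implications separately. For the forward direction, the first task is to identify $\cc^2$ with the affine chart $\xomegatheta \cap \{w \neq 0\}$. Setting $w = 1$ in \eqref{projective-definition} and using property \ref{smaller-property} to ensure $\alpha_k\omega_k - \omega_{k+1} \geq 1$, each equation $G_k = 0$ becomes
\begin{equation*}
y_{k+1} = y_k^{\alpha_k} - \theta_k \prod_{j=0}^{k-1} y_j^{\beta_{k,j}}, \qquad 1 \leq k \leq n,
\end{equation*}
which inductively expresses $y_2, \ldots, y_{n+1}$ as polynomials in $y_0, y_1$, giving the desired isomorphism with $\cc^2$ via $(y_0, y_1)$. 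Next I would verify that $D_\infty := \xomegatheta \cap \{w = 0\}$ is irreducible. At $w = 0$ each $G_k$ reduces to the binomial $y_k^{\alpha_k} = \theta_k \prod_{j<k} y_j^{\beta_{k,j}}$, whose two sides share the weighted-homogeneous degree $\alpha_k\omega_k$ by Remark~\ref{unique-remark} and property \ref{algebraic-semigroup-property}. Since $\gcd(\omega_0, \ldots, \omega_{n+1}) = e_{n+1} = 1$, a standard toric/binomial argument identifies $D_\infty$ with a one-dimensional toric subvariety of $\pp^{n+1}(\omega_0, \ldots, \omega_{n+1})$, hence irreducible. Normality is automatic on the smooth chart $\cc^2$; along $D_\infty$ it follows from Serre's criterion, using that $G_1, \ldots, G_n$ form a regular sequence transverse to $\{w = 0\}$ away from the finitely many ``toric'' points where some $y_i$ vanish, which need a separate local check.

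For the converse, given a normal primitive algebraic compactification $\bar X$ of $\cc^2$ with irreducible curve at infinity $C_\infty$, let $\nu$ denote the divisorial valuation on $\cc(x,y)$ sending each non-constant polynomial $f$ to its pole order along $C_\infty$. The plan is to inductively construct key polynomials $f_0, f_1, \ldots, f_{n+1} \in \cc[x,y]$ with $\omega_k := \nu(f_k)$ forming a key sequence: pick $f_0$, $f_1$ so that $\omega_0$, $\omega_1$ are the first two generators of the semigroup $\nu(\cc[x,y] \setminus \{0\})$; given $f_0, \ldots, f_k$, Remark~\ref{unique-remark} supplies nonnegative integers $\beta_{k,j}$ with $\alpha_k\omega_k = \sum_{j<k}\beta_{k,j}\omega_j$, and one sets
\begin{equation*}
f_{k+1} := f_k^{\alpha_k} - \theta_k \prod_{j=0}^{k-1} f_j^{\beta_{k,j}},
\end{equation*}
choosing $\theta_k \in \cc^*$ so that the leading polar parts cancel, forcing $\nu(f_{k+1}) < \alpha_k\omega_k$. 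Divisoriality of $\nu$ on $\cc^2$ guarantees that the value semigroup is finitely generated, so the induction terminates with $e_{n+1} = 1$; primitivity holds because $\nu$ is nonnegative on $\cc[x,y]$, and algebraicity follows because the $f_k$ are polynomials, forcing each $\beta_{k,0} \geq 0$.

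The $f_k$'s then define a morphism $\cc^2 \to \WP$, $(x,y) \mapsto [1:f_0:\cdots:f_{n+1}]$, which extends across $C_\infty$ by means of the valuation $\nu$ to a morphism $\bar X \to \WP$ landing in $\xomegatheta$ by construction. The induced map $\bar X \to \xomegatheta$ is a birational morphism between normal primitive compactifications of $\cc^2$ agreeing on the open dense $\cc^2$, and Zariski's main theorem together with the irreducibility of both curves at infinity forces it to be an isomorphism. The main obstacle is the inductive construction of the key polynomials: one must verify that the $\omega_k$ genuinely satisfy property \ref{smaller-property}, that each $\theta_k$ is nonzero (so cancellation actually reduces the pole order), and that the process terminates precisely when $e_k = 1$ -- this last point encodes divisoriality of $\nu$ on $\bar X$ and is the combinatorial heart of \cite{sub2-2}.
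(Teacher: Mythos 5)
The paper does not actually prove this proposition: it is quoted wholesale from \cite[Proposition 3.4]{sub2-2}, so your proposal can only be judged against what a complete proof must contain. Your overall strategy is the expected one --- solve the equations $G_k=0$ on the chart $\{w\neq 0\}$ to exhibit $\cc^2$, and, for the converse, build key polynomials inductively from the divisorial valuation $\nu$ given by the pole order along the irreducible curve at infinity --- but at each point where the real content lies the proposal asserts rather than argues, so it does not amount to a proof.

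Concretely: (i) in the inductive step of the converse, \cref{unique-remark} only guarantees $\beta_{k,0}\geq 0$ \emph{assuming} the sequence is algebraic, which is exactly what you are trying to establish, so invoking it there is circular; the later claim that algebraicity ``follows because the $f_k$ are polynomials'' is not an argument. (ii) The existence of $\theta_k\in\cc^*$ with $\nu\bigl(f_k^{\alpha_k}-\theta_k\prod_j f_j^{\beta_{k,j}}\bigr)<\alpha_k\omega_k$ requires the images of $f_k^{\alpha_k}$ and $\prod_j f_j^{\beta_{k,j}}$ in the associated graded ring $\gr_\nu \cc[x,y]$ to be proportional over $\cc$; since the residue field of a divisorial valuation on a surface has transcendence degree one, the homogeneous pieces of this graded ring are not one-dimensional for free, and this proportionality --- together with the termination claim $e_{n+1}=1$ and positivity of all $\omega_k$ --- is the actual substance of the cited proposition. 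You label these ``the combinatorial heart of \cite{sub2-2}'', which is accurate, but it means the heart of the proof is missing. (iii) In the forward direction, irreducibility of $\xomegatheta\cap\{w=0\}$ and normality are each deferred to ``a standard toric argument'' and ``a separate local check''; the binomial scheme cut out at $w=0$ can a priori acquire components inside the coordinate hyperplanes, so irreducibility genuinely uses the triangular shape of the equations together with $e_{n+1}=1$, and the normality verification at the finitely many boundary points outside the torus is not optional. In short, the architecture is right, but every load-bearing step is flagged rather than carried out.
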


Let $\vec \omega := (\omega_0, \ldots, \omega_{n+1})$ be a key sequence, and $\vec \omega_e := (\omega_{i_0}, \ldots, \omega_{i_{l+1}})$, where $0 = i_0 < i_1 < \cdots < i_{l+1} = n+1$, be the essential subsequence of $\vec \omega$. Define

\begin{align}
\chi_j := \frac{1}{\omega_0}(\omega_{i_j} - \sum_{k = 1}^{j-1}(\alpha_{i_k} - 1)\omega_{i_k}),
							\quad 1 \leq j \leq l+1. \label{beta-i-j}
\end{align}
where $\alpha_1, \ldots, \alpha_{n+1}$ are as in \cref{key-seqn}. Let 
\begin{align}
\scrE_{\vec \omega} 
	:=
	\begin{cases}
	 \{k\frac{\omega_1}{\omega_0}-1: k \in \zz,\  \max\{0, (\chi_{l+1}+1)\frac{\omega_0}{\omega_1}\} < k < \frac{\omega_0}{\omega_1} + 1\} \cup\{0\}
		 & \text{if}\ \omega_1 > 0,\\
	 \{k\frac{\omega_1}{\omega_0}-1: k \in \zz,\  0 < k < (\chi_{l+1}+1)\frac{\omega_0}{\omega_1}\}
	 		 & \text{if}\ \omega_1 < 0. 
	\end{cases} \label{exponents-omega}
\end{align}

Let $\beta \in \qq$. Let
\begin{align}
\hat k(\beta) 
	&:= 
	\begin{cases}
		0 &\text{if}\ \beta \geq \chi_1, \\
		\max\{k: 1 \leq k \leq l+1,\ \beta <  \chi_k\} & \text{otherwise}.
	\end{cases} 
	\label{hat-k-beta} \\
\hat \omega_\beta	
	&:= \omega_0 \beta + \sum_{j=1}^{\hat k(\beta)} (\alpha_{i_j}-1)\omega_{i_j} 
	\label{hat-omega-beta} \\
\hat I_\beta 
	&= 
	\begin{cases}
	\{i: i_{\hat k(\beta)} < i < i_{\hat k(\beta) + 1}\} 
		& \text{if}\ \hat k(\beta) \leq l\\
	\emptyset 
		&\text{if}\ \hat  k(\beta) = l+1.
	\end{cases}	
\end{align}
Note that $\hat \omega_{\chi_j} = \omega_{i_j}$, $1 \leq j \leq l+1$. 

\begin{defn} \label{normal-key-defn}
We say that a key sequence $\vec \omega = (\omega_0, \ldots, \omega_{n+1})$ is in the {\em normal form} if it satisfies one of the following (mutually exclusive) conditions: 
\begin{enumerate}[label = (N\arabic{enumi}), ref=N\arabic{enumi}]
\setcounter{enumi}{-1}

\item \label{trivially-normal} 
\begin{enumerate} 
\item\label{trivially-zero-length} $n = 0$.
\item \label{trivially-omega} $\omega_0 \geq \omega_1$. 
\end{enumerate}
\item \label{generally-normal}
\begin{enumerate}
\item \label{generaly->=1} $n \geq 1$.
\item \label{generally-less} $\omega_0 > \omega_1$.
\item \label{generally-alpha} $\frac{\omega_1}{\omega_0} \not\in \{\frac{1}{k}: k \in \zz,\ k \geq 1\} \cup \{0\}$. 
\item \label{generally-omega} For each $\beta \in \scrE_{\vec \omega}$, there does not exist $i \in \hat I_\beta$ such that $\omega_i = \hat \omega_\beta$. 
\end{enumerate}
\end{enumerate}
\end{defn}

\begin{prop}[{\cite[Theorems 4.6 and 6.3]{sub2-2}}] \label{projective-normal-embedding}
Let $\bar X$ be a primitive normal algebraic compactification of $\cc^2$.
\begin{enumerate}
\item There exist a unique $n \geq 0$ and a unique  primitive algebraic key sequence $\vec \omega = (\omega_0, \ldots, \omega_{n+1})$ in normal form such that $\bar X \cong \xomegatheta$ for some $\vec \theta \in \ntorus$. 
\item Let $\alpha_i$'s and $\beta_{i,j}$'s be as in \cref{unique-remark}. Moreover, set $\alpha_0 := 1$. Let $\vec \omega_e := (\omega_{i_0}, \ldots, \omega_{i_{l+1}})$ be the essential subsequence of $\vec \omega$. Define $\mu_1, \ldots, \mu_n \in \zz$ as follows: for each $i$, $1 \leq i \leq n$, pick the unique $k$ such that $i_k \leq i < i_{k+1}$, and set
\begin{align*}
\mu_i := \alpha_{i_0} \cdots \alpha_{i_k} - \sum_{j=1}^k \alpha_{i_0} \cdots \alpha_{i_{j-1}} \beta_{i,i_{j}}
\end{align*}
If $\vec \theta' \in \ntorus$ is such that $\bar X \cong \xomegathetaprime$ as well, then there exist $\lambda_1, \lambda_2 \in \torus$ such that
\begin{align*}
\theta'_i = \lambda_1^{-\beta_{i,0}}\lambda_2^{\mu_i}\theta_i,\ i= 1, \ldots, n.
\end{align*} 
\end{enumerate}
\end{prop}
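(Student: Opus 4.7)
The plan is to prove the two parts separately, and the bulk of the work is in (1). For (1), I start by invoking \cref{projective-embedding} to obtain \emph{some} presentation $\bar X \cong \bar X_{\vec\omega', \vec\theta'}$ with a primitive algebraic key sequence $\vec\omega'$. The goal is then to replace $(\vec\omega', \vec\theta')$ by an equivalent pair whose key sequence is in normal form, and to show the resulting normal form is unique. Existence splits into cases depending on which condition of \cref{normal-key-defn} is violated: \ref{trivially-normal} is purely formal, while failure of any of the four conditions under \ref{generally-normal} suggests a specific reduction.

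Concretely, the reductions I would carry out are: if $\omega_0 < \omega_1$, interchange the two coordinates of $\cc^2$ to swap the roles of $\omega_0$ and $\omega_1$; if $\omega_1/\omega_0 = 1/k$ for some positive integer $k$ (or $\omega_1 = 0$), then $G_1$ is polynomial in $y_1$ and a change of coordinates $y \mapsto y - p(x)$ on $\cc^2$ eliminates $y_1$ and shortens the sequence; and if $\omega_i = \hat\omega_\beta$ for some $\beta \in \scrE_{\vec\omega}$, then $G_i$ can be absorbed into a polynomial automorphism of $\cc^2$ of the type analyzed in \cite{sub2-2}, once again shortening the sequence. These reductions terminate in finitely many steps since each strictly decreases the length $n$ or a secondary invariant such as the tuple $(\omega_0, \omega_1, \ldots)$ in lexicographic order.

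For the uniqueness half of (1), my strategy is to show that the normal-form key sequence is recoverable from intrinsic data of $\bar X$. The essential subsequence $\vec\omega_e$ can be extracted from the weighted dual graph of the minimal desingularization of the curve at infinity, together with the filtration by divisorial valuations centered at infinity; once $\vec\omega_e$ is fixed, the remaining entries $\omega_i$ (those with $\alpha_i = 1$) are uniquely determined by the algebraicity condition together with the clause forbidding coincidences $\omega_i = \hat\omega_\beta$. I expect this uniqueness argument to be the principal obstacle of the proof: one must verify that the polynomial coordinate changes appearing in the existence step are the \emph{only} source of non-uniqueness in the raw presentation of \cref{projective-embedding}, and that the normal-form conditions precisely kill them.

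Part (2) is then essentially a calculation built on part (1). Any isomorphism $\bar X_{\vec\omega, \vec\theta} \to \bar X_{\vec\omega, \vec\theta'}$ restricts to an automorphism of $\cc^2 = \bar X \setminus L$, where $L$ is the curve at infinity; the classification of such automorphisms from \cite{sub2-2}, combined with the rigidity of the normal form, forces this restriction to be a diagonal rescaling $(y_0, y_1) \mapsto (\lambda_1 y_0, \lambda_2 y_1)$ with $\lambda_1, \lambda_2 \in \torus$. Solving \cref{projective-definition} inductively for $y_{k+1}$ then yields $y_k \mapsto \lambda_1^{a_k}\lambda_2^{b_k} y_k$ with exponents $(a_k, b_k)$ governed by a recursion keyed to the essential subsequence, and comparing the induced scalings on the two sides of $G_i$ gives precisely the formula $\theta'_i = \lambda_1^{-\beta_{i,0}} \lambda_2^{\mu_i} \theta_i$, with the alternating combination defining $\mu_i$ emerging by telescoping the recursion through the essential indices below $i$.
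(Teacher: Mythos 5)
The first thing to say is that the paper contains no proof of this proposition: it is imported verbatim from the companion paper (\cite[Theorems 4.6 and 6.3]{sub2-2}) and used as a black box, so there is no in-paper argument to compare yours against. Your sketch --- reduce a raw presentation from \cref{projective-embedding} to normal form by explicit coordinate changes on $\cc^2$, prove uniqueness by exhibiting the normal form as an invariant of $\bar X$, and derive the transformation law for $\vec\theta$ from the automorphism/isomorphism classification --- is a plausible reconstruction of the skeleton of the proof in \cite{sub2-2}, and the way you match the normal-form conditions of \cref{normal-key-defn} to specific reduction moves is sensible.

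As a proof, however, it has genuine gaps rather than merely omitted routine detail. In part (1) each reduction is asserted, not verified: you do not check that swapping coordinates, eliminating $y_1$ when $\omega_1/\omega_0=1/k$, or ``absorbing $G_i$'' when $\omega_i=\hat\omega_\beta$ produces another \emph{primitive algebraic} key sequence presenting the same surface, and the termination argument (``the tuple $(\omega_0,\omega_1,\ldots)$ in lexicographic order'') is not a well-founded measure until you specify how the entries change under each move. You also explicitly defer the uniqueness half, which you correctly identify as the crux; as it stands that half is a statement of intent, not an argument. In part (2), the claim that the restriction of an isomorphism to $\cc^2$ is ``forced to be a diagonal rescaling'' is contradicted by \cref{aut}: the stabilizer of $X$ in the automorphism group of $\xomegatheta$ already contains the non-diagonal maps $(x,y)\mapsto(a^{\bar\omega_0}x+f(y),\,a^{\bar\omega_1}y+c)$. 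What you actually need is that every isomorphism $\xomegatheta\to\xomegathetaprime$ factors as a diagonal rescaling composed with such an automorphism, and that only the diagonal factor moves $\vec\theta$; granting that, your inductive computation of the scaling exponents of the $y_k$ and the telescoping that produces $\mu_i$ is correct, and is the easier half of the argument.
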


\begin{thm}[{\cite[Theorem 5.2]{sub2-2}}] \label{aut}
Fix a system of coordinates $(x,y)$ on $X := \cc^2$. Let $\vec \omega := (\omega_0, \ldots, \omega_{n+1})$ be a primitive key sequence in normal form, $\vec\theta := (\theta_1, \ldots,\theta_n) \in \ntorus$, and $\bar X := \xomegatheta$ be the corresponding primitive compactification of $X$. Let $\scrG$ be the group of automorphisms of $\bar X$. 
\begin{enumerate}	
\item \label{aut-wt} If  \eqref{trivially-normal} holds, then $\bar X \cong \pp^2(1,\omega_0,\omega_1)$. Fix (weighted) homogeneous coordinates $[z:x:y]$ on $\bar X$. 
\begin{enumerate}
\item \label{aut-wt-0} If $\omega_0 = \omega_1 = 1$, then $\bar X \cong \pp^2$ and $\scrG \cong PGL(3,\cc)$. 
\item \label{aut-wt-1} If $\omega_0 > \omega_1 = 1$, then $\scrG = \{[z:x:y] \mapsto [az+by:cx+f(y,z):dz + ey] : a,b,d,e\in \cc,\ ad -be \neq 0,\ c \in \cc^*,\ f$ is a homogeneous polynomial in $(y,z)$ of degree $\omega_0\}$.
\item \label{aut-wt-2} If $\omega_0 > \omega_1 > 1$, then $\scrG = \{[z:x:y] \mapsto [z:ax+ f(y,z): by + cz^{\omega_1}] : a,b\in \cc^*,\ c\in \cc,\ f$ is a weighted homogeneous polynomial in $(y,z)$ of weighted degree $\omega_0\}$.
\end{enumerate}
\item \label{aut-general} If \eqref{generally-normal} holds, define $\bar\omega_k := \omega_{k}/\alpha_{n+1}$, $0 \leq k \leq n$, and $\baromegastar_k := \alpha_1 \bar \omega_1 + \sum_{j=2}^{k-1}(\alpha_j-1)\bar \omega_j - \bar \omega_k$, $2 \leq k \leq n$, where $\alpha_1, \ldots, \alpha_{n+1}$ are as in \cref{key-seqn}. Set $\baromegastar := \gcd(\baromegastar_2, \ldots, \baromegastar_n)$ (note that $\baromegastar$ is defined only if $n \geq 2$) and 
\begin{align}
k_{\bar X} := -\left(\omega_0 + \omega_{n+1} + 1 - \sum_{k=1}^n (\alpha_k -1)\omega_k\right) \label{kbarX}
\end{align}
where $\alpha_1, \ldots, \alpha_{n+1}$ are as in Definition \ref{key-seqn}. Then $\scrG$ consists of all $F:\bar X \to \bar X$ such that 
\begin{align*}
F|_X& : (x,y) \mapsto (a^{\bar \omega_0} x + f(y), a^{\bar \omega_1} y + c),\ \text{where}\\
a &= \begin{cases}
		\text{an arbitrary element of $\cc^*$} 	& \text{if $n = 1$,}\\	
		\text{an $\baromegastar$-th root of	unity}	& \text{if $n \geq 2$.}
	 \end{cases} \\
c &= \begin{cases}
		0 & \text{if}\ \omega_0 + k_{\bar X} \geq 0, \\
		\text{an arbitrary element in \cc} & \text{otherwise.}
	 \end{cases}
\end{align*}
and $f(y) \in \cc[y]$ is a polynomial such that $\deg(f) \leq -(k_{\bar X} + \omega_1 + 1)/\omega_1$. \qed
\end{enumerate}
\end{thm}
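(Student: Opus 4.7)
The plan is as follows. Any automorphism of $\bar X$ necessarily preserves the irreducible curve at infinity $C_\infty := \bar X\setminus X$ setwise, hence restricts to an automorphism of $X \cong \cc^2$. So I identify $\scrG$ with the set of $F \in \aut(\cc^2)$ admitting a biregular extension to $\bar X$, and the task is to describe this set in each case.

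When \eqref{trivially-normal} holds, $\bar X$ is the weighted projective space $\pp^2(1,\omega_0,\omega_1)$ by \cref{projective-embedding}. Here I would use the standard description of automorphisms of a weighted projective space as coming from weight-preserving graded $\cc$-algebra automorphisms of the Cox ring $\cc[z,x,y]$ (with weights $(1,\omega_0,\omega_1)$), modulo overall $\cc^*$-rescaling. A direct enumeration then splits into the three subcases according to whether $\omega_0 = \omega_1 = 1$ (all three generators interchangeable, giving $PGL(3,\cc)$), $\omega_0 > \omega_1 = 1$ (only $z$ and $y$ interchangeable, while $x$ may be translated by a homogeneous polynomial of degree $\omega_0$ in $y,z$), or $\omega_0 > \omega_1 > 1$ (no interchange; $y$ may be translated by $z^{\omega_1}$ and $x$ by weighted homogeneous polynomials in $y,z$ of weighted degree $\omega_0$).

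When \eqref{generally-normal} holds, the argument is substantially more involved. The central tool is the divisorial valuation $\nu$ centered at $C_\infty$, which under the embedding of \cref{omega-theta-defn} satisfies $\nu(y_k) = -\omega_k$ for $0 \leq k \leq n+1$; in particular $\nu(x) = -\omega_0$ and $\nu(y) = -\omega_1$. Any $F \in \scrG$ preserves $\nu$, hence preserves the pole filtration $\scrF_m := \{h \in \cc[x,y] : \nu(h) \geq -m\}$. Combining the explicit description of the key generators $y_k$ coming from the recursion in the equations $G_k$ with the normal-form conditions \ref{generally-alpha} and \ref{generally-omega}, which forbid alternative generators of the same pole order, I would show that $F^*(y) = \lambda y + c$ and $F^*(x) = \mu x + f(y)$, where $\lambda, \mu \in \cc^*$, $f \in \cc[y]$, the constant $c$ may be nonzero iff $\omega_0 + k_{\bar X} < 0$, and $\deg(f) \leq -(k_{\bar X}+\omega_1+1)/\omega_1$. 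The bounds on $c$ and $\deg(f)$ come from the expression \eqref{kbarX} for $k_{\bar X}$, which encodes precisely which elements of $\cc[x,y]$ remain regular (of sufficiently low pole order) on the compactified curve.

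Finally, compatibility with the defining equations $G_k$ forces the scalar relation $\lambda = a^{\bar\omega_1}$, $\mu = a^{\bar\omega_0}$ for a common $a \in \cc^*$. When $n = 1$, no further constraint arises and $a$ is free. When $n \geq 2$, propagating the action of $F$ through the monomials $y_k^{\alpha_k}$ and $\theta_k\prod_{j<k} y_j^{\beta_{k,j}}$ appearing on the right-hand side of $G_k$, and using \cref{unique-remark}, yields $a^{\baromegastar_k} = 1$ for each $2 \leq k \leq n$, equivalently $a^{\baromegastar} = 1$. The converse direction, namely that every map of the asserted form genuinely extends to a biregular automorphism of $\bar X$, is a direct verification against \cref{omega-theta-defn} using the $\beta_{k,j}$. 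The main obstacle is the combinatorial bookkeeping in this last step: tracking how the parameter $a$ propagates through the cascade of relations $G_k$ is where the quantity $\baromegastar$ emerges intrinsically, and this is also where the normal-form conditions \ref{generally-alpha}--\ref{generally-omega} enter decisively to exclude exotic automorphisms arising from alternative identifications of $(x,y)$ with a pair of key generators.
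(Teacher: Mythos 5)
First, a structural remark: the paper does not actually prove \cref{aut} --- it is imported verbatim from \cite[Theorem 5.2]{sub2-2} as a preliminary and closed with an end-of-proof mark, so there is no internal argument to compare yours against. Judged on its own terms, your outline is a plausible reconstruction of the kind of argument that must appear in \cite{sub2-2}: the pole filtration along $C_\infty$, the propagation of a single scalar $a$ through the relations $G_k$ to produce the root-of-unity condition $a^{\baromegastar}=1$, and the use of the normal-form conditions \ref{generally-alpha}--\ref{generally-omega} to rule out exotic identifications are all the right ingredients. But the proposal has a genuine gap at its very first step.

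Your opening claim --- that every automorphism of $\bar X$ preserves $C_\infty$ setwise and hence restricts to an automorphism of $\cc^2$ --- is false in general, and the statement of the theorem itself exhibits the counterexamples: in case \eqref{aut-wt-1} the maps $[z:x:y]\mapsto[az+by:\cdots:dz+ey]$ with $b\neq 0$ move the line at infinity $\{z=0\}$, and for $\pp^2$ the full group $PGL(3,\cc)$ certainly does not fix a line. The paper is careful about precisely this point elsewhere: the proof of \cref{min-aut} invokes \cite[Proposition 5.1]{sub2-2} to establish that every automorphism fixes $X$ \emph{except} when $\bar X\cong\pp^2(1,1,p)$, and the proof of \cref{g2a-thm} exploits the fact that automorphisms can carry one copy of $\cc^2$ to another. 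So in case \eqref{generally-normal} you need an actual argument (for instance, that under the hypotheses of \eqref{generally-normal} the curve $C_\infty$ is the unique irreducible curve with complement isomorphic to $\cc^2$, or that its self-intersection or singularity data distinguish it), not an assertion. A second, smaller gap: preservation of the valuation $\nu$ alone only yields $\deg f\leq \omega_0/\omega_1$, which is strictly weaker than the asserted bound $\deg f\leq -(k_{\bar X}+\omega_1+1)/\omega_1$; for example, for $\vec\omega=(3,2,4)$ the former allows $\deg f=1$ while the latter forces $\deg f=0$. The correct bound requires the finer condition that $F$ extend regularly across $C_\infty$, i.e.\ compatibility with the whole graded section ring generated by the $y_k$, not merely with the top level of the pole filtration. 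You flag this bookkeeping as ``the main obstacle'' but do not carry it out, so the constraints on $c$, on $\deg f$, and the emergence of $\baromegastar$ all remain unverified in your write-up.
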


\section{$\GG^2_a$-structures on normal surfaces with Picard rank one} \label{action-section}
Let $G$ be a connected linear algebraic group. A $G$-variety is a variety $Y$ with a fixed left action of $G$ such that the stabilizer of a generic point is trivial and the orbit of a generic point is dense. A $\gtwoa$-surface is a $G$-variety for $G = \gtwoa$. Two left actions $\sigma_1, \sigma_2$ of $G$ on $Y$ are {\em equivalent} if there is a commutative diagram as follows:
\begin{align*}
\xymatrix{
    G \times Y  \ar[r]^{(\alpha, j)} \ar[d]^{\sigma_1}
		  & G \times Y \ar[d]^{\sigma_2} \\  
	Y  \ar[r]^{j} 	 
			& Y
}
\end{align*}
where $\alpha$ (resp.\ $j$) is an automorphism of $G$ (resp.\ $Y$). \\

\Cref{action-lemma} below studies a class of $\gtwoa$-actions on $\cc^2$ of a very special form. It will be used in the classification of $\gtwoa$-surfaces (\cref{g2a-thm}). 

\begin{lemma} \label{action-lemma}
Let $\Phi$ be the morphism $\gtwoa \times \cc^2 \to \cc^2$ given by 
\begin{align}
(t_1,t_2) \cdot (x,y) = \left(a(t_1, t_2)x + \sum_{i=0}^m b_i(t_1, t_2)y^i, b(t_1, t_2)y + c(t_1, t_2) \right),
\end{align}
where $(t_1,t_2)$ (resp.\ $(x,y)$) are coordinates on $\gtwoa$ (resp.\ $\cc^2$), $m \geq 0$ and $a, b, c, b_1, \ldots, b_m \in \cc[t_1, t_2]$. Then $\Phi$ defines a $\gtwoa$-action on $\cc^2$ iff each of the following conditions holds:
\begin{enumerate}
\item $a(t_1,t_2) = b(t_1,t_2) = 1$ for all $(t_1,t_2) \in \gtwoa$.
\item There are $c_1, c_2 \in \cc$ such that $c(t_1, t_2) = c_1t_1 + c_2t_2$ for all $t_1, t_2 \in \gtwoa$. 
\item If $(c_1, c_2) = (0,0)$, then $b_i$ is linear in $(t_1, t_2)$ for each $i$. 
\item If $(c_1, c_2) \neq (0,0)$, then there exists $\lambda_0, \ldots, \lambda_m, \mu_0 \in \cc$ such that 
\begin{align}
b_i(t_1,t_2) &:= 
	\begin{cases}
	g_0(c_1t_1 + c_2t_2) + \mu_0(\bar c_2t_1 - \bar c_1 t_2) & \text{if}\ i = 0,\\
	g_i(c_1t_1 + c_2t_2)  & \text{if}\ i = 1, \ldots, m,\\
	\end{cases}\ \label{b-i-1}
\end{align}
where for each $ i = 0, \ldots, m$, 
\begin{align}
g_i(r) &:= 	\lambda_i r + \frac{\lambda_{i+1}}{2}\binom{i+1}{1}r^2 + \cdots + \frac{\lambda_{m}}{m-i+1}\binom{m}{m-i}r^{m-i+1} \label{g-i-1}
\end{align}
\end{enumerate}
\end{lemma}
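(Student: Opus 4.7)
My plan is to translate the identity and associativity axioms of a group action into polynomial identities in $(s_1, s_2, t_1, t_2, x, y)$ and extract their consequences component by component. The converse direction of the \textit{iff} amounts to reversing these manipulations and is essentially automatic once the functional equations have been solved, so I will concentrate on the forward direction.

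The identity axiom forces $a(0,0) = b(0,0) = 1$, $c(0,0) = 0$, and $b_i(0,0) = 0$. Comparing coefficients of $y$ and of $1$ in the $y$-component of $s\cdot(t\cdot(x,y)) = (s+t)\cdot(x,y)$ yields the polynomial identities $a(s+t) = a(s)a(t)$, $b(s+t)=b(s)b(t)$, and $c(s+t) = b(s)c(t) + c(s)$. Any polynomial $a \in \cc[t_1,t_2]$ satisfying $a(s+t) = a(s)a(t)$ with $a(0,0)=1$ is identically $1$: the relation $a(s)a(-s) = a(0,0) = 1$ shows $a$ is nowhere zero on $\cc^2$, so $a$ is a non-vanishing polynomial and hence a constant, which must be $1$. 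The same argument gives $b \equiv 1$, and then the equation for $c$ reduces to additivity, yielding $c(t_1,t_2) = c_1 t_1 + c_2 t_2$. This establishes conditions~(1) and~(2).

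Set $B(t,y) := \sum_{i=0}^m b_i(t) y^i$. The $x$-component of associativity is then the cocycle identity
\begin{equation*}
B(s+t, y) = B(t, y) + B(s, y + c(t)) \quad\text{in } \cc[s,t,y].
\end{equation*}
If $c \equiv 0$, this collapses to additivity of $B$ in $t$, so each $b_i$ is additive and hence linear, giving condition~(3). If $c \not\equiv 0$, I change variables by setting $u := c(t) = c_1 t_1 + c_2 t_2$ and choosing $v := \bar c_2 t_1 - \bar c_1 t_2$ to be any linear form on $\gtwoa$ linearly independent from $u$. Then $B(t,y) = G(u,v,y)$ for a unique polynomial $G$, and the cocycle identity becomes
\begin{equation*}
G(u_s + u_t,\, v_s + v_t,\, y) = G(u_t, v_t, y) + G(u_s, v_s, y + u_t),
\end{equation*}
a polynomial identity in the five independent indeterminates $u_s, v_s, u_t, v_t, y$. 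Differentiating it separately in $v_s$ and in $v_t$ and matching the two resulting expressions for $\partial_v G(u_s+u_t, v_s+v_t, y)$, then specializing the remaining variables, I deduce that $\partial_v G(u, v, y) = H(y + u)$ where $H(y) := \partial_v G(0,0,y)$; in particular, $G$ is linear in $v$. Substituting $G(u, v, y) = G_0(u, y) + v\cdot H(y+u)$ back into the cocycle and comparing terms linear in $v$ forces $H$ to be a constant $\mu_0 \in \cc$, while the $v$-free terms give the reduced cocycle $G_0(u_s+u_t, y) = G_0(u_t, y) + G_0(u_s, y + u_t)$.

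Differentiating the reduced cocycle in $u_s$ at $u_s = 0$ gives $\partial_u G_0(u, y) = G_1(y + u)$ with $G_1(y) := \partial_u G(0,0,y) =: \sum_{i=0}^m \lambda_i y^i$; integrating from $u = 0$ yields
\begin{equation*}
G_0(u, y) = \sum_{i=0}^m \lambda_i \cdot \frac{(y+u)^{i+1} - y^{i+1}}{i+1},
\end{equation*}
and extracting the coefficient of $y^j$ (using $\binom{i+1}{j}/(i+1) = \binom{i}{j}/(i-j+1)$) produces exactly the polynomial $g_j$ of \eqref{g-i-1}; the surviving $v$-linear term in $G$ contributes $\mu_0(\bar c_2 t_1 - \bar c_1 t_2)$ solely to $b_0$, matching \eqref{b-i-1}. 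The main obstacle is the case $c \not\equiv 0$: one must recognize and isolate the additional one-parameter family indexed by $\mu_0$ attached to the complementary linear form $v$, without which only the $g_i$'s would be recovered. Once $G$ is shown to be linear in $v$, the remaining content of the lemma reduces to solving a single polynomial $1$-cocycle on $(\cc,+)$ by direct integration.
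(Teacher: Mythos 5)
Your proof is correct and follows essentially the same route as the paper: the same cocycle identities, the same elimination of $a$, $b$ and linearization of $c$, the same adapted coordinates $(u,v)=(c_1t_1+c_2t_2,\ \bar c_2t_1-\bar c_1t_2)$ to isolate the $\mu_0$-term, and the same differentiate-then-integrate solution of the one-variable cocycle, merely packaged via the generating polynomial $G(u,v,y)$ instead of componentwise in the $b_i$. The one genuine (minor) improvement is that your closed form $G_0(u,y)=\int_0^u G_1(y+u')\,du'$ makes the converse implication immediate from additivity of the integral, whereas the paper verifies it by an explicit binomial computation.
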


\begin{proof}
See \cref{acsection}. 
\end{proof}

Let $\vec \omega := (\omega_0, \ldots, \omega_{n+1})$ be a primitive key sequence in the normal form, $\vec \theta := (\theta_1, \ldots, \theta_n) \in \ntorus$ and $\bar X := \xomegatheta$ be the corresponding primitive compactification of $\cc^2$. Define $\kbarX$ as in \eqref{kbarX}. 

\begin{thm} \label{g2a-thm}
\mbox{}
\begin{enumerate}
\item \label{g2a-existence} The following are equivalent:
\begin{enumerate}
\item \label{non-trivial-g2a} $\bar X$ admits the structure of a $\GG^2_a$-surface,
\item \label{q>=0} $\omega_0 + \kbarX < 0$.
\end{enumerate}
\item \label{g2a-equation} Assume $\bar X \not\cong \pp^2$ and that there is a $\GG^2_a$-action $\sigma$ on $\bar X$ which makes it a $\GG^2_a$-surface. Then there is an automorphism $F$ of $\bar X$ such that $F$ such that $X$ is invariant under $\sigma \circ (\id, F)$, where $\id$ is the identity map of $\gtwoa$, and $(\sigma \circ (\id, F))|_X$ is of the form  
\begin{align}
(t_1,t_2) \cdot (x,y) = \left(x + \sum_{i=0}^{\momega} g_i(c_1t_1 + c_2t_2)y^i + \mu(\bar c_2t_1 - \bar c_1t_2) , y + c_1t_1 + c_2t_2 \right), \label{action}
\end{align}
where 
\begin{align}
\momega := \lfloor-(k_{\bar X} + \omega_1 + 1)/\omega_1 \rfloor, \label{m}
\end{align}
$(c_1, c_2) \in \cc^2\setminus\{(0,0)\}$, $\mu \in \cc\setminus \{0\}$, $\bar c_j$ are complex conjugates of $c_j$, and
\begin{align}
g_i(r) &:= 	\lambda_i r + \frac{\lambda_{i+1}}{2}\binom{i+1}{1}r^2 + \cdots + \frac{\lambda_{\momega}}{\momega-i+1}\binom{\momega}{\momega-i}r^{\momega-i+1} \label{g-i}
\end{align}
for some $\lambda_0, \ldots, \lambda_{\momega} \in \cc$.
\item Conversely, for every $g_0, \ldots, g_{\momega}, c_1, c_2$ as above, identity \eqref{action} defines a 
$\gtwoa$-structure on $\bar X$. 
\end{enumerate}
\end{thm}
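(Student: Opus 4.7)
The plan is to combine Theorem~\ref{aut} (which describes $\scrG := \aut(\bar X)$) with Lemma~\ref{action-lemma} (the normal form for actions on $\cc^2$ of the relevant shape), in order to extract both the existence criterion and the explicit formula for any $\gtwoa$-structure on $\bar X$.

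Suppose $\sigma$ makes $\bar X$ a $\gtwoa$-surface. Since $\gtwoa$ is connected unipotent with simply connected underlying variety, the unique open $\sigma$-orbit $U$ has trivial stabilizer and is isomorphic to $\gtwoa\cong\cc^2$; because $\bar X$ is projective and $\gtwoa$ affine, $U$ is a proper open subset whose complement is a $\sigma$-invariant irreducible curve. I would first normalize: in case~\ref{generally-normal} Theorem~\ref{aut}\ref{aut-general} shows every element of $\scrG$ preserves $X$, so $X$ is automatically $\sigma$-invariant; in the weighted projective cases \ref{aut-wt-1}-\ref{aut-wt-2} the explicit form of $\scrG$ acts transitively on the set of $\cc^2$-charts of $\bar X$, so one can choose $F\in\scrG$ moving the closure of $U$ onto $C_\infty$. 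After this normalization, $\rho:\gtwoa\to\scrG$, $\rho(t_1,t_2)=\sigma((t_1,t_2),\cdot)$, factors through the stabilizer of $X$.

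Applying Theorem~\ref{aut}, connectedness and unipotence of $\gtwoa$ force the character $a:\scrG\to\cc^*$ to pull back trivially (either $a$ has finite order when $n\geq 2$, or its target $\cc^*$ admits no nontrivial characters from $\gtwoa$ when $n=1$), so $a\equiv 1$ on $\rho(\gtwoa)$. Hence
\begin{align*}
\rho(t_1,t_2)\cdot(x,y) = \bigl(x + f_{(t_1,t_2)}(y),\; y + c(t_1,t_2)\bigr),\qquad \deg_y f_{(t_1,t_2)}\le m_{\vec\omega}.
\end{align*}
This is precisely the setup of Lemma~\ref{action-lemma} with $a=b=1$, $m=m_{\vec\omega}$: the lemma forces $c(t_1,t_2)=c_1t_1+c_2t_2$ to be $\cc$-linear and prescribes the $b_i$'s as in~\eqref{g-i-1}. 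The $\gtwoa$-surface requirement rules out the degenerate sub-cases: if $(c_1,c_2)=(0,0)$ every orbit lies in $\{y=\textrm{const}\}$, and if $(c_1,c_2)\neq(0,0)$ but $\mu_0=0$ the action factors through the $1$-parameter subgroup $\{c_1t_1+c_2t_2=0\}$, so the stabilizer is positive-dimensional. Hence $(c_1,c_2)\neq(0,0)$ and $\mu\neq 0$, and substituting into the lemma recovers \eqref{action} verbatim.

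The equivalence~\ref{non-trivial-g2a}$\Leftrightarrow$\ref{q>=0} follows from the same analysis: if $\omega_0+k_{\bar X}\ge 0$ then Theorem~\ref{aut}\ref{aut-general} forces $c=0$ throughout $\scrG$, so $\rho(\gtwoa)$ fixes each fiber $\{y=\textrm{const}\}$ and no $2$-dimensional orbit exists (the weighted projective cases always satisfy $\omega_0+k_{\bar X}=-\omega_1-1<0$, hence lie outside this regime); conversely, if $\omega_0+k_{\bar X}<0$ then since $\omega_0>\omega_1$ in normal form with $\bar X\not\cong\pp^2$ one obtains $m_{\vec\omega}\ge 0$, and the translation $(t_1,t_2)\cdot(x,y)=(x+t_2,\,y+t_1)$ (matching $a=1$, $c=t_1$, $f(y)=t_2$ in Theorem~\ref{aut}) lies in $\scrG$ and defines a $\gtwoa$-structure. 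Part~3 of the theorem is then the converse direction of Lemma~\ref{action-lemma} together with the observation that each element of~\eqref{action} satisfies the degree bound $\deg f\le m_{\vec\omega}$ and hence lies in $\scrG$. The main obstacle is the geometric normalization in the weighted projective cases, i.e.\ producing $F\in\scrG$ that carries the open $\sigma$-orbit onto $X$; once that is dispatched using the explicit form of $\scrG$ supplied by Theorem~\ref{aut}\ref{aut-wt}, the remainder is routine algebraic manipulation of the formulas provided by Theorem~\ref{aut} and Lemma~\ref{action-lemma}.
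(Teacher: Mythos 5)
Your proposal is correct and follows essentially the same route as the paper: normalize via \cref{aut} so that the open orbit/invariant copy of $\cc^2$ becomes $X$, observe that \cref{aut} forces the restricted action into the polynomial shape of \cref{action-lemma}, and then read off the normal form \eqref{action} from that lemma. You actually supply more detail than the paper's very terse proof does (in particular, why $(c_1,c_2)\neq(0,0)$ and $\mu\neq 0$ are forced by the dense-orbit and trivial-stabilizer requirements, and how the criterion $\omega_0+k_{\bar X}<0$ falls out of the description of $c$ in \cref{aut}), all of which is consistent with the intended argument.
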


\begin{proof}
\Cref{aut} implies that given any two copies of $\cc^2$ in $\bar X$, there is an automorphism of $\bar X$ that takes one to the other. Let $F$ be any automorphism of $\bar X$ which maps a $\sigma$-invariant copy of $\cc^2$ to $X$; set $\tau := \sigma \circ (\id, F)$. \Cref{aut} implies that $\tau|_X$ is given by
\begin{align}
(t_1,t_2) \cdot (x,y) = \left(a(t_1, t_2)x + \sum_{i=0}^{\momega} b_i(t_1, t_2)y^i, b(t_1, t_2)y + c(t_1, t_2) \right),
\end{align}
where $\momega$ is from \eqref{m}. Now the result follows from \cref{action-lemma}.
\end{proof}

We continue with the notation from \cref{g2a-thm}. If $\omega_0 + \kbarX < 0$, then \cref{g2a-thm} implies that the following equation defines a $\gtwoa$-structure $\tau_\lambda$ on $\xomegatheta$ for each $\lambda \in \cc$.
\begin{align}
(t_1,t_2) \cdot_{\tau_\lambda} (x,y) 
	= \left(x + \lambda \sum_{i=0}^m \frac{1}{m-i+1} \binom{m}{m-i} t_1^{m-i+1} y^i + t_2 , y + t_1 \right) \label{tau}
\end{align}
where $m := \momega$ from \eqref{m}. Note that $\tau_0$ is simply the action $(t_1,t_2) \cdot (x,y) = (x+t_2, y+t_1)$. 

\begin{thm} \label{structure}
Let $\bar \omega_0,\ldots, \bar \omega_n$ be as in assertion \eqref{aut-general} of \cref{aut}. Assume $\bar X \not\cong \pp^2$ and $\omega_0 + \kbarX < 0$.
\begin{enumerate}
\item If $m = 0$, then every $\gtwoa$-structure on $\bar X$ is equivalent to $\tau_0$.
\item If $n = 0$ (or equivalently, $\bar X$ is isomorphic to a weighted projective variety) and $m > 0$, then up to equivalence there are precisely two $\gtwoa$-structures on $\bar X$, namely $\tau_0$ and $\tau_1$.
\item If $n \geq 1$ and $m > 0$, then
\begin{enumerate}
\item every $\gtwoa$-structure on $\bar X$ is equivalent to $\tau_\lambda$ for some $\lambda \in \cc$;
\item if $n = 1$, then $\tau_\lambda$ is equivalent to $\tau_{\lambda'}$ iff $\lambda' = \zeta^{\bar \omega_0}\lambda$ for some $\bar \omega_1$-th root $\zeta$ of identity;
\item if $n >1$, then $\tau_\lambda$ is equivalent to $\tau_{\lambda'}$ iff $\lambda' = \zeta^{\bar \omega_0}\lambda$ for some $\bar \omega'$-th root $\zeta$ of identity, where $\bar \omega' := \gcd(\bar \omega_1, \ldots, \bar \omega_n)$. 
\end{enumerate}

\end{enumerate}
\end{thm}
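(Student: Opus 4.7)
The plan is to start from the explicit form of a $\gtwoa$-action given by \cref{g2a-thm} and reduce it successively by applying elements of $\aut(\gtwoa) = GL(2,\cc)$ and of $\aut(\bar X)$.

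First, I rewrite the action \eqref{action} in a cleaner form. The combinatorial identity $\frac{1}{j-i+1}\binom{j}{j-i} = \frac{1}{j+1}\binom{j+1}{i}$ yields, after rearranging the double sum in \eqref{g-i},
\[
\sum_{i=0}^m g_i(r)\,y^i \;=\; P(y+r) - P(y),\qquad \text{where}\quad P(u) := \sum_{j=0}^m \frac{\lambda_j}{j+1} u^{j+1}.
\]
The linear change of $\gtwoa$-parameters $(t_1,t_2) \mapsto (c_1 t_1 + c_2 t_2,\,\mu(\bar c_2 t_1 - \bar c_1 t_2))$---invertible because $\mu \neq 0$ and $(c_1,c_2) \neq (0,0)$---then puts the action in the normal form
\[
(t_1,t_2)\cdot(x,y) = \bigl(x + P(y+t_1) - P(y) + t_2,\ y + t_1\bigr),
\]
in which $\tau_\lambda$ corresponds exactly to the monomial $P(u) = \frac{\lambda}{m+1} u^{m+1}$.

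For $m = 0$ the polynomial $P$ is linear, so $P(y+t_1)-P(y) = \lambda_0 t_1$ is killed by the $\gtwoa$-shear $(t_1,t_2) \mapsto (t_1,\,t_2 - \lambda_0 t_1)$, leaving $\tau_0$ and proving (i). For $m>0$ and $n\geq 1$, I apply an automorphism $F \in \aut(\bar X)$ from case \eqref{aut-general} of \cref{aut}---namely $F|_X : (x,y) \mapsto (a^{\bar\omega_0} x + f(y),\,a^{\bar\omega_1} y + c)$---together with the unique compensating $\alpha \in \aut(\gtwoa)$ that keeps the conjugated action in the above normal form; one checks that this forces $\alpha$ to be lower triangular with diagonal $(a^{\bar\omega_1},\,a^{\bar\omega_0})$ and a single free off-diagonal entry $C$. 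A direct computation yields the transformation law
\[
a^{\bar\omega_1}\,Q'(a^{\bar\omega_1} y + c) = a^{\bar\omega_0}\,P'(y) + f'(y) - C,
\]
from which the top-degree coefficient of the new polynomial $Q$ equals that of $P$ multiplied by $a^{\bar\omega_0 - (m+1)\bar\omega_1}$, while the lower-degree coefficients of $Q$ can be adjusted freely using $f$ (contributing $m$ parameters via $f'$), the shear $C$, and the translation $c$. A dimension count then shows these parameters suffice to cancel all coefficients of $Q$ of degree $\leq m$, normalizing $Q$ to a pure monomial $\frac{\lambda'}{m+1} u^{m+1}$, i.e.\ to $\tau_{\lambda'}$; this establishes (iii.a). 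Case (ii) is treated similarly, using the richer automorphism groups from \eqref{aut-wt-1}--\eqref{aut-wt-2} of \cref{aut} to scale any nonzero $\lambda$ to $1$.

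Finally, (iii.b) and (iii.c) are obtained by determining precisely which scalars $a$ permitted by \cref{aut} produce self-equivalences among the $\tau_\lambda$. The scaling law $\lambda' = a^{\bar\omega_0 - (m+1)\bar\omega_1}\lambda$ must be combined with the constraints on $a$---arbitrary $\cc^*$ if $n=1$, $\baromegastar$-th root of unity if $n\geq 2$---and with the self-consistency required of $(f, c, C)$ under the normalization, which together force $a^{\bar\omega_1}$ (respectively $a^{\bar\omega'}$) to be a root of unity and produce the stated condition $\lambda' = \zeta^{\bar\omega_0}\lambda$. The main obstacle here is the careful bookkeeping that separates those transformations $F$ which merely shift the lower-order part of $P$ (and hence act trivially on the top monomial after normalization) from those which genuinely rescale the top coefficient, together with the reduction of the exponent $\bar\omega_0 - (m+1)\bar\omega_1$ modulo the orders available for $a$.
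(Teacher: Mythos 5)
Your reformulation of \eqref{action} as $(t_1,t_2)\cdot(x,y)=(x+P(y+t_1)-P(y)+t_2,\,y+t_1)$ is correct and is essentially a clean repackaging of the coefficient bookkeeping in the paper's own proof; the conjugation law you derive, $a^{\bar\omega_1}Q'(a^{\bar\omega_1}y+c)=a^{\bar\omega_0}P'(y)+f'(y)-C$, is also right, and parts (i) and (iii.a) go through along the lines you indicate (the paper performs the same normalization by solving explicitly for the coefficients $h_k$ of $h$ and for $s_2$). The gap is in (iii.b)--(iii.c). Your own equation, applied with $P'=\lambda u^m$ and $Q'=\lambda'u^m$, imposes exactly one condition relating $\lambda$ and $\lambda'$, namely $a^{(m+1)\bar\omega_1}\lambda'=a^{\bar\omega_0}\lambda$; every lower-order coefficient merely determines a coefficient of $f'$ (all of which are available, since $\deg f\le m$), or $C$, or the constant of integration, and imposes no condition whatsoever on $a$, $c$ or $C$. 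So the ``self-consistency of $(f,c,C)$'' that you invoke to force $a^{\bar\omega_1}$ to be a root of unity does not materialize from the ingredients you list, and $a^{\bar\omega_0-(m+1)\bar\omega_1}$ agrees with $a^{\bar\omega_0}$ only \emph{after} one knows $a^{\bar\omega_1}=1$. That is precisely where the paper's proof does its real work: it plays the coefficient of $t_1$ in the $y^m$-component (identity \eqref{lambda_m}) against the coefficient of $t_1^2$ in the $y^{m-1}$-component of \eqref{y^j} to conclude $d=1$, i.e.\ $a^{\bar\omega_1}=1$, and only then reads off $\lambda'=a^{\bar\omega_0}\lambda$ with the constraints on $a$ from \cref{aut}. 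As written, your conclusion for (iii.b)--(iii.c) is asserted rather than proved, and the same criticism applies to the one-sentence treatment of (ii).

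Before attempting to patch this, test the claim on $\vec\omega=(3,2,5)$ (the example of \cref{simple-section}, with $m=1$, $\bar\omega_0=3$, $\bar\omega_1=2$). There \cref{aut} allows $F|_X:(x,y)\mapsto(a^3x,a^2y)$ for every $a\in\cc^*$, and a one-line computation shows that $F$ together with $\alpha(t_1,t_2)=(a^2t_1,a^3t_2)$ intertwines $\tau_1$ with $\tau_{1/a}$, exactly as your scaling exponent $\bar\omega_0-(m+1)\bar\omega_1=-1$ predicts. So the extra scalings are genuinely realized and identify all $\tau_\lambda$ with $\lambda\neq 0$ in that case; the missing step cannot be supplied by more careful bookkeeping of $(f,c,C)$ alone. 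You must either locate a genuine additional constraint on $a$ or reconcile your (correct) transformation law with the stated classification. Be warned that the paper's own derivation of $d=1$ appears to drop the factors $d^{m-i+1}$ arising from $s_1^{m-i+1}=(dt_1)^{m-i+1}$ on the right-hand side of \eqref{F-equivalence-2}, so you should recheck that step independently rather than import it.
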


\begin{proof}
The $m=0$ case follows immediately from \cref{g2a-thm}. So assume $m \geq 1$. Let $X$ be a copy of $\cc^2$ in $\bar X$ with coordinates $(x,y)$. 
Let $\sigma$ be an arbitrary $\gtwoa$-action on $\bar X$. \Cref{aut} implies that replacing $\sigma$ by a $\gtwoa$-equivalent action if necessary, we may assume that $X$ is invariant under $\sigma$. From \eqref{action} it is straightforward to see that after a change of coordinates on $\gtwoa$ we may assume that the action of $\sigma$ on $X$ has the following form :  
\begin{align}
(t_1,t_2) \cdot_\sigma (x,y) = \left(x + \sum_{i=0}^m g_i(t_1)y^i + t_2 , y + t_1 \right), \label{action-1}
\end{align}
where $g_i$'s are defined as in \eqref{g-i}. We would like to understand when there are automorphisms $F$ of $\bar X$ and $\phi$ of $\gtwoa$ which induce an $\gtwoa$-equivalence of $\sigma$ and $\tau_\lambda$. \Cref{orbit-lemma} below implies that $F(X) = X$. Therefore \cref{aut} implies that $F|_X$ is of the form
\begin{align}
(x,y) \mapsto (ax + h(y), dy + e) \label{F}
\end{align}
for some $h \in \cc[y]$ such that $\deg(h) \leq m$. 
%
The $\gtwoa$-equivalence of $\sigma$ and $\tau_\lambda$ is equivalent to the identity
\begin{align}
F((t_1, t_2) \cdot_{\sigma} (x,y)) &= \phi(t_1, t_2) \cdot_{\tau_\lambda} F(x,y) \label{F-equivalence}
\end{align}
Writing $\phi(t_1, t_2) = (s_1, s_2)$ identity \eqref{F-equivalence} becomes
\begin{align*}
F\left(x + \sum_{i=0}^m g_i(t_1)y^i + t_2 , y + t_1 \right)
	&= (s_1, s_2) \cdot_{\tau_\lambda}  (ax + h(y), dy + e) 
\end{align*}
which is equivalent to
\begin{align}
&\left(a\left(x + \sum_{i=0}^m g_i(t_1)y^i + t_2 \right) + h(y+t_1), dy + dt_1 + e \right) \notag \\
&\qquad	\qquad =  \left(ax + h(y) + \lambda \sum_{i=0}^m \frac{1}{m-i+1} \binom{m}{m-i} s_1^{m-i+1} (dy + e)^i + s_2 , dy + e + s_1 \right) \label{F-equivalence-2}
\end{align}
Comparing the $y$-coordinates of both sides of \eqref{F-equivalence-2} gives that $s_1 = dt_1$. Write $h(y) = \sum_{i=0}^m h_iy^i$, $h_i \in \cc$, $i = 0, \ldots, m$. Then a comparison of the $x$-coordinates of both sides of \eqref{F-equivalence-2} implies that 
\begin{align*}
a\sum_{i=0}^m g_i(t_1)y^i + at_2 
	+ \sum_{i=0}^{m} \sum_{j=0}^{i-1}h_i \binom{i}{j}t_1^{i-j}y^j
	=  \lambda \sum_{i=0}^m \frac{1}{m-i+1} \binom{m}{m-i} t_1^{m-i+1} \sum_{j=0}^i \binom{i}{j} e^{i-j}d^jy^j + s_2
\end{align*}
or equivalently,
\begin{align*}
a g_m(t_1)y^m 
	&+ \sum_{j=0}^{m-1}\left(a g_j(t_1) 
	+ \sum_{i=j+1}^{m} h_i \binom{i}{j}t_1^{i-j} \right) y^j + at_2 \\
	&=  \lambda d^my^m + \lambda \sum_{j=0}^{m-1}  \sum_{i=j}^m  \frac{1}{m-i+1} \binom{m}{m-i} \binom{i}{j} e^{i-j}  t_1^{m-i+1} d^jy^j + s_2
\end{align*}
Write
\begin{align}
g_j(t_1) &= 	\sum_{k= j}^m \frac{\lambda_k}{k-j+1}\binom{k}{k-j}t_1^{k-j+1},
\end{align}
Comparing coefficients of $y^m$ gives
\begin{align}
& a\lambda_m = d^m\lambda \label{lambda_m}
\end{align}
For each $j = 1, \ldots, m-1$, comparing coefficients of $y^j$ gives 
\begin{align}
& \frac{a\lambda_m}{m-j+1}\binom{m}{j}t_1^{m-j+1}  +
\sum_{l= 1}^{m-j} \left(\frac{a\lambda_{l+j-1}}{l}\binom{l+j-1}{j} +  h_{l+j} \binom{l+j}{j} \right) t_1^{l} \\
&\qquad \qquad	= 
	     \frac{d^j\lambda}{m-j+1} \binom{m}{j}  t_1^{m-j+1}  + 
	   \sum_{l=1}^{m-j}  \frac{\lambda d^j}{l} \binom{m}{l-1} \binom{m-l+1}{j} e^{m-l+1-j}  t_1^l \label{y^j} 
\end{align}	 
and similarly, for $j = 0$, we have
\begin{align}
& \frac{a\lambda_m}{m+1}t_1^{m+1}  +
\sum_{l= 1}^m \left( \frac{a\lambda_{l-1}}{l} +  h_l \right) t_1^{l}  + at_2
	= 
	     \frac{\lambda}{m+1}  t_1^{m+1}  + 
	   \sum_{l=1}^m \frac{\lambda}{l} \binom{m}{l-1} e^{m-l+1}  t_1^l + s_2 \label{y^0}
\end{align}	 
Identity \eqref{lambda_m} and a comparison of the coefficients of $t_1^2$ from \eqref{y^j} for $j = m-1$ imply that $d = 1$. But then it is straightforward to check that 
\begin{align}
h_k &= - \frac{a\lambda_{k-1}}{k} + \frac{1}{m+1}\binom{m+1}{k}e^{m+1 - k},\quad k=1, \ldots, m \\
s_2 &= at_2
\end{align}
is a solution to equations \eqref{y^j} and \eqref{y^0}. It follows that $\sigma$ is equivalent to $\tau_\lambda$ if and only if it can be arranged that $a\lambda_m = \lambda$ and $d = 1$. The theorem now follows from \cref{aut}. 
\end{proof}

\begin{defn}
A {\em singular del Pezzo surface} is a singular normal projective surface $Y$ with only {\em $ADE$-singularities}, i.e.\ singularities for which the dual graphs of minimal resolutions are Dynkin diagrams of type $A_k$, $D_l$, or $E_m$ for some $k, l$ or $m$, and each of the irreducible exceptional curve are rational curves with self intersection number $-2$. The {\em type} of a singular point on $Y$ is the type (as a Dynkin diagram) of the weighted dual graph of its minimal resolution.
\end{defn}

\begin{cor} \label{del-Pezzo-cor}
Adopt the notation of \cref{g2a-thm}. 
\begin{enumerate}
\item \label{del-Pezzo-assertion} The following are equivalent:
\begin{enumerate}
\item $\xomegatheta$ is a singular del Pezzo surface which admits a $\gtwoa$-structure.
\item \label{del-Pezzo-omega} $\vec \omega$ is either $(2, 1)$, $(3,2)$, $(3,2,5)$ or $(3,2,4)$. 
\end{enumerate}
\item If $\vec \omega = (2,1)$, then $\xomegatheta \cong \pp^2(1,1,2)$; $\xomegatheta$ has only one singular point and it is of type $A_1$. Up to equivalence $\xomegatheta$ has precisely two $\gtwoa$-structures. 
\item If $\vec \omega = (3,2)$, then $\xomegatheta \cong \pp^2(1,3,2)$; $\xomegatheta$ has two singular points - one of type $A_2$ and the other of type $A_1$. Up to equivalence $\xomegatheta$ has precisely two $\gtwoa$-structures. 
\item If $\vec \omega = (3,2,5)$, then $\xomegatheta$ is isomorphic to the hypersurface in $\pp^2(1,3,2,5)$ (with weighted homogeneous coordinates $[w:x:y:z]$) defined by $wz - (y^3 + x^2) = 0$; $\xomegatheta$ has only one singular point and it is of type $A_4$. Up to equivalence $\xomegatheta$ has a one dimensional moduli of $\gtwoa$-structures. 
\item If $\vec \omega = (3,2,4)$, then $\xomegatheta$ is isomorphic to the hypersurface in $\pp^2(1,3,2,4)$ (with weighted homogeneous coordinates $[w:x:y:z]$) defined by $w^2z - (y^3 + x^2) = 0$; $\xomegatheta$ has only one singular point and it is of type $D_5$. Up to equivalence $\xomegatheta$ has a one dimensional moduli of $\gtwoa$-structures. 
\end{enumerate} 
\end{cor}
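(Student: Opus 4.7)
The plan is to combine the minimal-desingularization description from \cite{sub2-1} with \cref{g2a-thm,structure} to reduce to a finite case analysis. A preliminary observation simplifies the del Pezzo condition: since $\xomegatheta$ is rational (it contains $\cc^2$) and of Picard rank one, and since ADE singularities are Gorenstein (Du Val), as soon as the singularities are ADE the canonical class $K_{\xomegatheta}$ is Cartier and proportional to the ample generator; rationality excludes $K \geq 0$, so $-K$ is automatically ample. Thus it suffices to find normal-form key sequences $\vec\omega$ for which $\xomegatheta$ has only ADE singularities and satisfies $\omega_0 + \kbarX < 0$.

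For the forward direction of part \eqref{del-Pezzo-assertion}, I would split on $n$. When $n = 0$, $\xomegatheta \cong \pp^2(1,\omega_0,\omega_1)$ has cyclic quotient singularities whose Hirzebruch-Jung resolutions are chains of $(-2)$-curves iff $\omega_1 \equiv -1 \pmod{\omega_0}$ and (when $\omega_1 > 1$) $\omega_0 \equiv -1 \pmod{\omega_1}$, which together with $\omega_0 \geq \omega_1$ and $\gcd(\omega_0,\omega_1) = 1$ leaves only $(2,1)$ and $(3,2)$. When $n \geq 1$, I would apply \cite{sub2-1} to draw the weighted dual graph (the case $\vec\omega = (3,2,5)$ is \cref{fig:simple}): the exceptional tree has a backbone indexed by the essential subsequence, with side chains coming from the non-essential indices and the continued fraction of $\omega_0/\omega_1$. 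Imposing that every contracted curve be a $(-2)$-curve and using the normal-form restrictions \eqref{generally-less}--\eqref{generally-omega} together with the algebraic condition \eqref{algebraic-semigroup-property}, I would show that the essential subsequence has length exactly three with $\omega_0 = 3$, $\omega_1 = 2$, $\alpha_1 = 3$, $\omega_{n+1} \in \{4,5\}$, and no non-essential indices, giving $(3,2,5)$ and $(3,2,4)$. A direct check of $\omega_0 + \kbarX < 0$ in all four cases completes the equivalence.

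For parts (2)--(5), I would read off the equations from \cref{omega-theta-defn}: weighted projective planes $\pp^2(1,1,2)$ and $\pp^2(1,3,2)$ for $n = 0$; for $n = 1$ with $\alpha_1 = 3$ and $\beta_{1,0} = 2$, hypersurfaces of the form $w^{\alpha_1\omega_1 - \omega_2}z - (y^3 - \theta_1 x^2) = 0$ in $\pp^3(1,3,2,\omega_2)$, with $\theta_1$ absorbed by a rescaling of $(w,x,y,z)$, yielding $wz = y^3 + x^2$ for $\omega_2 = 5$ and $w^2z = y^3 + x^2$ for $\omega_2 = 4$. The Dynkin type follows from the dual graph (the $(-2)$-chain $L-E_3-E_2-E_1$ in \cref{fig:simple} gives $A_4$ for $(3,2,5)$; an analogous construction gives $D_5$ for $(3,2,4)$), and can be double-checked for $(3,2,4)$ by a direct Milnor-number computation at the unique singularity in the chart $z \neq 0$, where the local model is the $\mu_4$-quotient of $\{w^2 = y^3 + x^2\}$ and a monomial count in $\cc\{b,p,c\}/(b, pc, p^2 + 16c^3)$ gives Milnor number $5$ of type $D_5$. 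Finally, the number of $\gtwoa$-structures comes from $\momega = \lfloor -(\kbarX + \omega_1 + 1)/\omega_1 \rfloor$ and \cref{structure}: $m = 2, 1, 1, 0$ in the four cases, giving $2$, $2$, $\cc/\{\pm 1\}$, and $1$ equivalence classes respectively.

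The main obstacle is the dual-graph analysis in the $n \geq 1$ case: unpacking the self-intersection numbers produced by \cite{sub2-1} and converting the Du Val condition back into constraints on $\vec\omega$ requires bounding the length of the essential subsequence and ruling out side chains, using both $\alpha_k \geq 2$ and $\omega_{k+1} < \alpha_k \omega_k$ repeatedly. Once this combinatorial reduction is in place, the remaining verifications are direct substitutions into \cref{omega-theta-defn,g2a-thm,structure}.
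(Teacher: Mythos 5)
Your overall architecture (reduce the del Pezzo condition to ``only ADE singularities,'' classify via the dual graphs of \cite{sub2-1}, then read off equations and moduli from \cref{omega-theta-defn,g2a-thm,structure}) is reasonable, and it is genuinely more self-contained than the paper, which simply cites \cite[Corollary 7.9]{sub2-2} for the list of singular del Pezzo surfaces among the $\xomegatheta$ and then filters by $\omega_0+\kbarX<0$. However, your combinatorial reduction in the $n\geq 1$ case contains a false intermediate claim. The Du Val condition together with normality and algebraicity does \emph{not} force $\omega_{n+1}\in\{4,5\}$ and the absence of non-essential indices: by \cite[Corollary 7.9]{sub2-2} the full list of singular del Pezzo surfaces is $\vec\omega=(2,1)$, $(3,2)$, $(3,2,5,1)$ and $(3,2,6-r)$ for $r=1,\dots,5$. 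Indeed, for $\vec\omega=(3,2,6-r)$ the formal Newton pairs are $(2,3),(-r,1)$, and \cref{primitive-resolution-thm} gives a central $(-2)$-vertex with branches of lengths $1$, $2$ and $r-1$, i.e.\ $A_4,D_5,E_6,E_7,E_8$ for $r=1,\dots,5$ --- all Du Val. The cases $r\geq 3$ and $(3,2,5,1)$ are excluded from the corollary only because they fail the $\gtwoa$-condition: $\omega_0+\kbarX=r-3$ for $(3,2,6-r)$, and $\omega_0+\kbarX=2>0$ for $(3,2,5,1)$. Since your plan uses $\omega_0+\kbarX<0$ only as a final verification ``in all four cases'' rather than as the filter that eliminates the $E_6,E_7,E_8$ and $(3,2,5,1)$ cases, the $(a)\Rightarrow(b)$ direction as you propose it cannot be carried out; the fix is to enumerate \emph{all} Du Val cases first and then apply the $\gtwoa$-criterion.

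Separately, your count for $\vec\omega=(3,2,4)$ deserves comment. You compute $\kbarX=-4$, hence $\momega=\lfloor 1/2\rfloor=0$, and conclude from \cref{structure} that there is a unique $\gtwoa$-structure up to equivalence. This contradicts part (5) of the corollary as stated (``a one dimensional moduli''), but it agrees with \cref{structure} and with the paper's introduction, which says the $D_5$ surface ``admits only one $\gtwoa$-structure modulo equivalence.'' So here the discrepancy appears to lie in the statement of the corollary rather than in your computation. Your remaining verifications (the $n=0$ cyclic-quotient analysis giving $(2,1)$ and $(3,2)$ with singularities $A_1$ and $A_2+A_1$, the equations $wz=y^3+x^2$ and $w^2z=y^3+x^2$ after absorbing $\theta_1$, the $A_4$ chain from \cref{fig:simple}, and the moduli $\cc/\{\pm 1\}$ for $(3,2,5)$ via $\bar\omega_0=3$, $\bar\omega_1=2$) are correct and match what the paper obtains by citation.
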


\begin{proof}
\cite[Corollary 7.9]{sub2-2} implies that $\xomegatheta$ is a singular del Pezzo surface iff $\vec \omega = (2,1)$, $(3,2)$, $(3,2,5,1)$ or $(3,2,6-r)$, $r = 1, \ldots, 5$. Assertion \eqref{del-Pezzo-assertion} then follows from \cref{g2a-thm} and the observation that $\omega_0 + \kbarX < 0$ for precisely those $\vec \omega$ listed in assertion \eqref{del-Pezzo-omega}. The other assertions follow from \cref{structure}, \cite[corollary 7.9]{sub2-2} and the description in \cite[theorem 4.5]{sub2-1} of weighted dual graphs of minimal resolutions of singularities of primitive compactifications of $\cc^2$. 
\end{proof}

\section{$\gtwoa$-structures on minimal desingularizations of $\gtwoa$-surfaces of Picard rank one}

\begin{lemma} \label{orbit-lemma}
Let $Y$ be an irreducible variety, $G$ be a group, $U$ be an open subset of $Y$, and $\sigma_1$, $\sigma_2$ be two actions of $G$ on $Y$ such that $U$ is an orbit of $G$ under both $\sigma_1$ and $\sigma_2$.  Assume there are automorphisms $\alpha$ of $G$ and $j$ of $Y$ such that $\sigma_2(a(g), j(y)) = j(\sigma_1(g,y))$ for all $g \in G$, $y \in Y$. Then $j(U) = U$.  
\end{lemma}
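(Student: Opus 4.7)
The plan is to use the intertwining relation to show that $j$ carries $\sigma_1$-orbits onto $\sigma_2$-orbits, and then to exploit the irreducibility of $Y$ to force $j(U)=U$ by a density argument.

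First, I would observe that the relation $\sigma_2(\alpha(g), j(y)) = j(\sigma_1(g,y))$ says exactly that, for any fixed $y_0 \in Y$, as $g$ ranges over $G$ the right-hand side traces out $j$ applied to the $\sigma_1$-orbit of $y_0$, while the left-hand side traces out the $\sigma_2$-orbit of $j(y_0)$ (here using that $\alpha$ is a bijection of $G$). Applying this to any $y_0\in U$, and using the hypothesis that $U$ is the $\sigma_1$-orbit of $y_0$, we conclude that $j(U)$ is precisely the $\sigma_2$-orbit of $j(y_0)$; in particular, $j(U)$ is a single $\sigma_2$-orbit.

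Next, since $j$ is an automorphism of the variety $Y$, it is a homeomorphism in the Zariski topology, so $j(U)$ is a non-empty open subset of $Y$. The set $U$ is also non-empty and open by hypothesis. Because $Y$ is irreducible, any two non-empty open subsets of $Y$ meet, so $j(U)\cap U\neq\emptyset$. Pick any $y\in j(U)\cap U$: then $y$ lies simultaneously in the $\sigma_2$-orbit $j(U)$ and in the $\sigma_2$-orbit $U$, and since distinct orbits of a group action are disjoint, these two orbits coincide, giving $j(U)=U$.

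There is no substantive obstacle here; the only point worth flagging is that the hypothesis that $j$ is an automorphism of the variety $Y$ (rather than a mere set-theoretic bijection) is used precisely to ensure that $j(U)$ is open, which is what allows the irreducibility of $Y$ to be brought to bear.
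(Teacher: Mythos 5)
Your argument is correct and is essentially the same as the paper's: both use irreducibility of $Y$ to intersect two non-empty open sets ($U$ and $j(U)$, or equivalently $U$ and $j^{-1}(U)$ in the paper), and both use the intertwining relation together with bijectivity of $\alpha$ to identify $j(U)$ with a single $\sigma_2$-orbit, which must then coincide with $U$ since orbits partition. The only difference is cosmetic: the paper picks $y\in U\cap j^{-1}(U)$ and computes the $\sigma_2$-orbit of $j(y)$ directly, whereas you first establish that $j(U)$ is an orbit and then intersect.
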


\begin{proof}
Indeed, since $U$ is open in $Y$, there exist $U \cap j^{-1}(U) \neq \emptyset$. Pick $y \in U \cap j^{-1}(U)$.  Then 
\begin{align*}
U	
	&= \text{$G$-orbit of $j(y)$ under $\sigma_2$} \\
	&= \{\sigma_2(a(g), j(y)): g \in G\}\\
	&= \{ j(\sigma_1(g,y)): g \in G\} \\
	&= j(\text{$G$-orbit of $y$ under $\sigma_1$}) \\
	&= j(U)
\end{align*}
as required.
\end{proof}

Let $\vec \omega$ be a key sequence in normal form, $\bar X := \xomegatheta$ be a $\gtwoa$-surface of Picard rank $1$ containing $X \cong \cc^2$, and $\pi: \bar X^{\min} \to \bar X$ be the minimal desingularization. Let $\sigma_1$ and $\sigma_2$ be actions of $\gtwoa$ on $\bar X$ which fix $X$. Assertion \eqref{lift} of \cref{min-aut} implies that there are group actions $\sigma'_j$, $1 \leq j \leq 2$ of $\gtwoa$ on $\bar X^{\min}$ which are compatible with $\sigma_j$ and $\pi$, i.e.\ $\pi(\sigma'_j(a,x')) = \sigma_j(a,\pi(x'))$ for each $j$ and $a \in \gtwoa$, $x' \in \bar X^{\min}$.

\begin{lemma} \label{going-down-equivalence}
If $\sigma'_1$ is equivalent to $\sigma'_2$, then $\sigma_1$ is equivalent to $\sigma_2$.
\end{lemma}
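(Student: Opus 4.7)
The plan is to descend $j'$ through $\pi$ to obtain an automorphism $j$ of $\bar X$, so that $(\alpha, j)$ serves as the required equivalence of $\sigma_1$ and $\sigma_2$. First, since $\bar X$ is a $\gtwoa$-surface with open orbit $X$, the singular locus $\sing(\bar X)$ is disjoint from $X$, so $\pi|_{U'} \colon U' \to X$ is an isomorphism (where $U' := \pi^{-1}(X)$) and $U'$ is the open orbit of both $\sigma'_1$ and $\sigma'_2$. Applying \cref{orbit-lemma} with this $U'$ gives $j'(U') = U'$, so $j'$ preserves $\pi^{-1}(L) = \bar X^{\min} \setminus U'$, where $L := \bar X \setminus X$.

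By the universal property of the birational contraction $\pi$, to show $j'$ descends to an automorphism $j$ of $\bar X$ satisfying $\pi \circ j' = j \circ \pi$ it suffices to show that $j'$ sends each fibre of $\pi$ into a fibre of $\pi$. Over the smooth locus of $\bar X$ this is automatic, so the question reduces to the exceptional divisor $E := \pi^{-1}(\sing \bar X)$, whose connected components are the non-trivial fibres of $\pi$. Since $j'$ permutes the irreducible components of $\pi^{-1}(L) = \tilde L \cup E$ (with $\tilde L$ the strict transform of $L$), the whole step further reduces to the single identity $j'(\tilde L) = \tilde L$: given this, $j'(E) = E$ and $j'$ permutes the connected components of $E$, yielding the required behaviour on fibres.

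The main obstacle is precisely this identity $j'(\tilde L) = \tilde L$. I would establish it using the normal-form hypothesis on $\vec \omega$ together with the explicit description of the weighted dual graph of $\pi^{-1}(L)$ on $\bar X^{\min}$ coming from \cite{sub2-1}: the normal form rules out the coincidences in self-intersection, valency, and adjacency pattern that would be needed to swap $\tilde L$ with an exceptional component, so every self-automorphism of $\bar X^{\min}$ preserving $\pi^{-1}(L)$ must fix the vertex corresponding to $\tilde L$ setwise.

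Once such a $j$ is produced, the equivariance on all of $\bar X$ is a formal diagram chase: applying $\pi$ to $\sigma'_2(\alpha(g), j'(x')) = j'(\sigma'_1(g, x'))$ and substituting the compatibilities $\pi \circ \sigma'_i = \sigma_i \circ (\id \times \pi)$ together with $\pi \circ j' = j \circ \pi$ yields $\sigma_2(\alpha(g), j(\pi(x'))) = j(\sigma_1(g, \pi(x')))$ for every $x' \in \bar X^{\min}$; by surjectivity of $\pi$ this is the required equivalence of $\sigma_1$ and $\sigma_2$ on $\bar X$, witnessed by the data $(\alpha, j)$.
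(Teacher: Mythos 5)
Your overall architecture is the same as the paper's: use \cref{orbit-lemma} to see that $j'$ preserves $U' = \pi^{-1}(X)$, descend $j'$ through $\pi$ to an automorphism $j$ of $\bar X$, and finish with the diagram chase. The first and last steps are correct (indeed, your final paragraph is more explicit than the paper's ``it is straightforward to see''). The problem is the middle step, which is where all the content lives. The paper does not re-prove the descent inside this lemma: it simply invokes assertion \eqref{descend} of \cref{min-aut} (``every automorphism of $\bar X^{\min}$ that fixes $X$ descends to an automorphism of $\bar X$''), whose hypothesis is exactly what \cref{orbit-lemma} gives you. You instead propose to establish the descent by showing $j'(\tilde L) = \tilde L$ via a purely combinatorial claim about the weighted dual graph of $\pi^{-1}(L)$ --- that the normal form of $\vec\omega$ ``rules out the coincidences in self-intersection, valency, and adjacency pattern'' needed to swap $\tilde L$ with an exceptional component. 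That claim is asserted, not proved, and it is precisely the hard part: nothing in \cref{primitive-resolution-thm} or the tables obviously forbids a weighted-graph symmetry moving $\tilde L$, and you would have to check it case by case against the classification in \cite{sub2-1}. Tellingly, the paper's own proof of \cref{min-aut}\eqref{descend} does not argue combinatorially at all; it uses the divisorial valuations attached to the exceptional curves, the fact that their key sequences are in normal form and pairwise distinct, and cites external results from \cite{sub2-2}. So your proposed route replaces the crux with an unverified assertion whose truth is not evident from the data in this paper.

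The gap is localized, though: once you have $j'(U') = U'$ from \cref{orbit-lemma}, you can delete the dual-graph paragraph entirely and cite \cref{min-aut}\eqref{descend} to obtain $j$ with $\pi \circ j' = j \circ \pi$; your concluding diagram chase then completes the proof exactly as in the paper. (A secondary remark: your reduction to ``$j'$ permutes the connected components of $E$'' would, if the graph claim held, suffice for set-theoretic descent, and normality of $\bar X$ plus $\pi_*\scrO_{\bar X^{\min}} = \scrO_{\bar X}$ upgrades this to a morphism --- that part of your argument is fine; it is only the input $j'(\tilde L) = \tilde L$ that is missing a proof.)
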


\begin{proof}
Assume $\sigma'_1$ is equivalent to $\sigma'_2$ via automorphisms $\alpha$ of $\gtwoa$ and $j$ of $\bar X^{\min}$. Assertion \eqref{descend} of \cref{min-aut} implies that $j$ induces an automorphism $\bar j$ of $\bar X$. It is straightforward to see that $\sigma'_1$ is equivalent to $\sigma'_2$ via $(\alpha, \bar j)$.
\end{proof}

\begin{cor} \label{isomorphic-structure}
The space of $\gtwoa$-structures modulo equivalence on $\bar X^{\min}$ is isomorphic to the space of $\gtwoa$-structures modulo equivalence on $\bar X$.
\end{cor}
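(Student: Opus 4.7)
The plan is to construct mutually inverse maps between the two moduli spaces. For the \emph{lift map} $\Phi$, observe that by \cref{aut} every $\gtwoa$-action $\sigma$ on $\bar X$ is equivalent to one that fixes $X$ (compose with an automorphism of $\bar X$ carrying a $\sigma$-invariant copy of $\cc^2$ onto $X$, as in the proof of \cref{g2a-thm}). Such an action lifts by assertion \eqref{lift} of \cref{min-aut} to a $\gtwoa$-action $\sigma'$ on $\bar X^{\min}$, and we set $\Phi([\sigma]) := [\sigma']$. For the \emph{descent map} $\Psi$, note that each $\sigma'(g,\cdot)$ is an automorphism of $\bar X^{\min}$ and so, by assertion \eqref{descend} of \cref{min-aut}, descends to an automorphism $\sigma(g,\cdot)$ of $\bar X$; these assemble into a $\gtwoa$-action $\sigma$ on $\bar X$, and we set $\Psi([\sigma']) := [\sigma]$.

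The well-definedness of $\Psi$ on equivalence classes is precisely \cref{going-down-equivalence}. For $\Phi$, suppose $\sigma_1,\sigma_2$ are equivalent actions on $\bar X$, both fixing $X$, via a pair $(\alpha, \bar j)$ with $\bar j \in \aut(\bar X)$ and $\alpha \in \aut(\gtwoa)$. Applying \eqref{lift} of \cref{min-aut} to $\bar j$ yields $j \in \aut(\bar X^{\min})$ with $\pi \circ j = \bar j \circ \pi$. Pulling the equivalence identity $\sigma_2(\alpha(g), \bar j(y)) = \bar j(\sigma_1(g,y))$ back through the isomorphism $\pi: \pi^{-1}(X) \to X$ gives $\sigma'_2(\alpha(g), j(x')) = j(\sigma'_1(g, x'))$ for all $x' \in \pi^{-1}(X)$; since both sides are morphisms on the irreducible variety $\gtwoa \times \bar X^{\min}$ and agree on a dense open, the identity extends everywhere, so $\sigma'_1$ and $\sigma'_2$ are equivalent.

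Finally, $\Phi$ and $\Psi$ are mutually inverse: starting from $\sigma'$ on $\bar X^{\min}$, the lift of its descent $\sigma$ agrees with $\sigma'$ on the dense open $\pi^{-1}(X)$ (where $\pi$ is an isomorphism) and hence everywhere; conversely, lifting an action on $\bar X$ that fixes $X$ and then descending returns the original action by the same density argument. The main obstacle is the well-definedness of $\Phi$ on equivalence classes, where one must lift the equivalence automorphism of $\bar X$ to $\bar X^{\min}$ and then promote an equality of morphisms on $\pi^{-1}(X)$ to one on all of $\bar X^{\min}$.
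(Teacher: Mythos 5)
Your overall strategy---mutually inverse lift and descent maps, with \cref{min-aut} supplying the lifts and descents and \cref{going-down-equivalence} (together with a lifted equivalence automorphism) handling well-definedness---is essentially the paper's, which obtains the corollary by combining \cref{structure}, \cref{going-down-equivalence} and \cref{min-aut}. Your treatment of $\Phi$, of its well-definedness via lifting $\bar j$ and extending an equality of morphisms from the dense open set $\pi^{-1}(X)$, and of the mutual inverseness, is fine.

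There is, however, one genuine gap, in the construction of $\Psi$. Assertion \eqref{descend} of \cref{min-aut} applies only to automorphisms of $\bar X^{\min}$ \emph{that fix $X$}, i.e.\ that preserve $\pi^{-1}(X)$, and for an arbitrary $\gtwoa$-action $\sigma'$ on $\bar X^{\min}$ you have not verified this hypothesis for the automorphisms $\sigma'(g,\cdot)$: the dense orbit of $\sigma'$ is a priori some other copy of $\cc^2$ in $\bar X^{\min}$, and nothing you cite produces an automorphism of $\bar X^{\min}$ carrying that orbit onto $\pi^{-1}(X)$ (\cref{aut} gives such automorphisms only for $\bar X$ itself). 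This is exactly the point the paper's proof isolates as the needed observation, namely that every $\gtwoa$-action on $\bar X^{\min}$ is equivalent to one fixing $\pi^{-1}(X)$. One way to supply it: each $\sigma'(g,\cdot)$ lies in the identity component of $\aut(\bar X^{\min})$ and hence acts trivially on numerical classes of divisors; since every exceptional curve $E$ of $\sigma\colon \bar X^{\min}\to\bar X$ has $E^2\leq -2$ and is the unique effective curve in its class, $\sigma'(g,\cdot)$ preserves each $E$ and therefore blows down to an automorphism of $\bar X$, after which \cref{aut} and \cref{orbit-lemma} let you normalize the dense orbit to $X$ and pull the normalization back up. Without some such argument, $\Psi$ is defined only on the image of $\Phi$, and the surjectivity of $\Phi$ is not established.
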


\begin{proof}
It follows immediately from combining \cref{structure,going-down-equivalence} once we observe that every $\gtwoa$-action on $\bar X^{\min}$ is equivalent to an action which fixes $X$. 
\end{proof}

\begin{cor}
If $\omega_0 + \kbarX < 0$ and $m_{\vec \omega} \geq 1$, then the minimal resolution of singularities of $\bar X$ admits a moduli of $\gtwoa$-structures. \qed 
\end{cor}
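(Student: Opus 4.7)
The plan is to chain \cref{g2a-thm,structure,isomorphic-structure} together; essentially no new work is required. The hypothesis $\omega_0+\kbarX<0$ places us in the setting of assertion \eqref{g2a-existence} of \cref{g2a-thm}, so $\bar X$ carries a $\gtwoa$-structure, and the explicit family $\{\tau_\lambda\}_{\lambda\in\cc}$ from \eqref{tau} is defined.

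Next I would invoke \cref{structure} under the additional assumption $\momega\geq 1$ to see that the family $\{\tau_\lambda\}$ produces inequivalent structures. Concretely, if $n=0$ then $\tau_0$ and $\tau_1$ are already non-equivalent, so there are at least two points in the moduli; if $n\geq 1$ then every $\gtwoa$-structure on $\bar X$ is equivalent to some $\tau_\lambda$, with $\tau_\lambda \sim \tau_{\lambda'}$ iff $\lambda' = \zeta^{\bar\omega_0}\lambda$ for $\zeta$ in the finite cyclic group of $\bar\omega_1$-th (resp.\ $\bar\omega'$-th) roots of unity when $n=1$ (resp.\ $n>1$). Since this group is finite while $\cc$ is uncountable, the moduli of $\gtwoa$-structures on $\bar X$ is non-trivial in every case. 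Finally, \cref{isomorphic-structure} transports this non-triviality from $\bar X$ to its minimal desingularisation, concluding the proof.

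The one cosmetic point is that the hypotheses $\omega_0+\kbarX<0$ and $\momega\geq 1$ do not by themselves exclude the degenerate case $\vec\omega=(1,1)$, i.e.\ $\bar X\cong\pp^2$, which is formally outside the scope of \cref{structure}. But $\pp^2$ is its own minimal resolution and by the Hassett--Tschinkel \cite{hassett-tschinkel} classification it already carries a moduli of $\gtwoa$-structures, so the conclusion holds trivially there. There is no genuine obstacle: the substantive content lives in \cref{structure,isomorphic-structure}, and this corollary is purely the book-keeping step that combines them.
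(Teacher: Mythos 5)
Your proposal is correct and is exactly the argument the paper intends: the corollary carries no proof precisely because it is the immediate combination of \cref{structure} (which, for $\momega \geq 1$, yields at least two --- and for $n \geq 1$ a one-parameter family of --- pairwise inequivalent structures $\tau_\lambda$ on $\bar X$) with \cref{isomorphic-structure}. Your extra remark about $\vec\omega = (1,1)$ falling formally outside \cref{structure} is reasonable book-keeping that the paper glosses over, though note that in that case (and for every $n = 0$ case) the moduli is only a two-point set, so the conclusion holds only under the reading that a nontrivial, rather than positive-dimensional, moduli is being claimed.
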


\section{Log terminal and log canonical and $\gtwoa$-surfaces with Picard rank one}
In \cref{tc-section} we recall following Alexeev \cite{alexeev} a part of Kawamata's \cite{kawamata} classification of two dimensional log terminal and log canonical singularities in terms of dual graphs of their resolutions of singularities. In \cref{primitive-dual-section} we recall from \cite{sub2-1} the description of dual graphs of resolution of singularities of primitive normal compactifications of $\cc^2$. In \cref{ptc-section} we combine these results to classify log terminal and log canonical primitive normal compactifications of $\cc^2$, and among these characterize those which admit $\gtwoa$-structures. 

\subsection{Two dimensional log terminal and log canonical singularities} \label{tc-section}

\begin{defn} \label{dual-defn}
Let $E_1, \ldots, E_k$ be non-singular curves on a (non-singular) surface such that for each $i \neq j$, either $E_i \cap E_j = \emptyset$, or $E_i$ and $E_j$ intersect transversally at a single point. Then $E = E_1 \cup \cdots \cup E_k$ is called a {\em simple normal crossing curve}. The {\em (weighted) dual graph} of $E$ is a weighted graph with $k$ vertices $V_1, \ldots, V_k$ such that 
\begin{itemize}
\item there is an edge between $V_i$ and $V_j$ iff $E_i \cap E_j \neq \emptyset$,
\item the weight of $V_i$ is the self intersection number of $E_i$.
\end{itemize}
Usually we will abuse the notation, and label $V_i$'s also by $E_i$. If $\pi: Y' \to Y$ is the resolution of singularities of a surface, then the weighted dual graph of $\pi$ is the weighted dual graph of the union of `exceptional curves' (i.e.\ the curves that contract to points under $\pi$) of $\pi$. 
\end{defn}

\begin{defn}
Let $(Y,P)$ be a germ of a normal surface, and $\pi:Y' \to Y$ be a resolution of the singularity of $(Y,P)$ such that the inverse image of $P$ is a normal crossing curve $E$. If $K_Y, K_{Y'}$ are respectively canonical divisors of $Y$, then 
$$K_{Y'} = \pi^*(K_Y) + \sum_j a_j E_j$$
where the sum is over irreducible components $E_j$ of $E$ and $a_j$ are rational numbers. The singularity $(Y,P)$ is called {\em log canonical} (resp.\ {\em log terminal}) if $a_j \geq -1$ (resp.\ $a_j > -1$) for all $j$. 
\end{defn}

\begin{defn}
Given a simple normal crossing curve $E$ on a surface, we denote by $\Delta(E)$ the absolute value of the determinant of the matrix of intersection numbers of components of $E$. If $\Gamma$ is the weighted dual graph of $E$, then we also write $\Delta(\Gamma)$ for $\Delta(E)$. 
\end{defn}

The result below is a special case of Kawamata's classification of log terminal and log canonical singularities. We follow the notation of Alexeev. The dual graphs of possible resolutions are listed in \cref{figure-t,figure-c}. The notation used in these figures is as follows: 
\begin{itemize}
\item each dot denotes a vertex;
\item a number next to a dot represents the weight of the vertex;
\item each empty oval denotes a {\em chain}, i.e.\ a tree such that every vertex has at most two edges; 
\item an oval with a symbol $\Delta$ in the interior denotes a chain $\Gamma$ with $\Delta(\Gamma) = \Delta$. 
\end{itemize}

\begin{thm}[Kawamata \cite{kawamata}, Alexeev \cite{alexeev}] \label{ct-thm}
Let $\pi: Y' \to Y$ be a resolution of singularities of a germ $(Y,P)$ of normal surface. Assume that the exceptional curve $E$ of $\pi$ satisfies the following properties: $E$ is a simple normal crossing curve, each irreducible component of $E$ is a rational curve, and the dual graph of $E$ is a tree.  
\begin{enumerate}
\item The singularity of $(Y,P)$ is log terminal iff the dual graph $\Gamma$ of $E$ is one of the graphs listed in \cref{figure-t}. 
\begin{figure}[htp]
\begin{center}

\begin{subfigure}{0.2\textwidth}
\begin{tikzpicture}
 	\pgfmathsetmacro\edge{1}
 	\pgfmathsetmacro\vedge{.75}
 	\pgfmathsetmacro\dashedvedge{1}
 	\pgfmathsetmacro\ovalx{.6}
 	\pgfmathsetmacro\ovaly{0.25}
	\pgfmathsetmacro\diffx{1.5}
	\pgfmathsetmacro\vx{0.1}
	
 	\draw (0,0) circle [x radius=\ovalx, y radius=\ovaly]; 

\end{tikzpicture}
\end{subfigure}
\begin{subfigure}{0.4\textwidth}
\begin{tikzpicture}
 	\pgfmathsetmacro\edge{1}
 	\pgfmathsetmacro\vedge{.75}
 	\pgfmathsetmacro\dashedvedge{1}
 	\pgfmathsetmacro\ovalx{.6}
 	\pgfmathsetmacro\ovaly{0.25}
	\pgfmathsetmacro\diffx{1.5}
	\pgfmathsetmacro\vx{0.1}
	
	 \draw (0,0) circle [x radius=\ovalx, y radius=\ovaly]; 
	 \draw (0,0 )  node {$\Delta_1$};
	 \draw (\ovalx, 0) -- (\ovalx + 2*\edge, 0);
	 \fill[black] (\ovalx + \edge, 0) circle (\vx);
	 \draw (2*\ovalx + 2*\edge,0) circle [x radius=\ovalx, y radius=\ovaly]; 
	 \draw (2*\ovalx + 2*\edge ,0 )  node {$\Delta_3$};
	 \draw (\ovalx + \edge, 0) -- (\ovalx + \edge, -\vedge);
	 \draw (\ovalx + \edge,-\vedge-\ovalx) circle [x radius=\ovaly, y radius=\ovalx]; 
	 \draw (\ovalx + \edge,-\vedge-\ovalx )  node {$\Delta_2$};
	 
	 \draw (\ovalx + 2*\edge,-\vedge) node [text width= 3.95cm, below right]  {$(\Delta_1, \Delta_2, \Delta_3) \in \{(2,2,n)$, $(2,3,3)$, $(2,3,4)$, $(2,3,5) \}$};
\end{tikzpicture}
\end{subfigure}
\caption{Dual graphs of resolutions of relevant log terminal singularities}
\label{figure-t}
\end{center}
\end{figure}
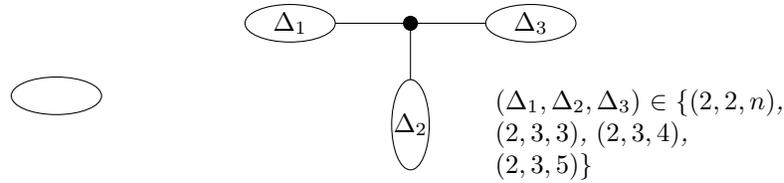
\item The singularity of $(Y,P)$ is log canonical but not log terminal iff the dual graph $\Gamma$ of $E$ is one of the graphs listed in \cref{figure-c}. 

\begin{figure}[htp]
\begin{center}
\newcommand{\drawsedge}[8]{

 	\coordinate (P1) at  ($(#1:{#2} and {#3})$);
 	\coordinate (P11) at ($(P1) + (#4:#5)$);
 	\draw (P1) -- (P11);
 	\fill[black] (P11) circle (#6);  
	\draw (P11) node [#8] {#7}; 
}

\begin{subfigure}{0.2\textwidth}
\begin{tikzpicture}
 	\pgfmathsetmacro\sedge{.75}
 	\pgfmathsetmacro\ovalx{.6}
 	\pgfmathsetmacro\ovaly{0.25}
	\pgfmathsetmacro\vx{0.1}
	\pgfmathsetmacro\vanglepos{30}
	\pgfmathsetmacro\vangledir{45}
	
 	\draw (0,0) circle [x radius=\ovalx, y radius=\ovaly]; 
 	
 	\coordinate (P1) at  ($(\vanglepos:0.6 and 0.25)$);
	\drawsedge{\vanglepos}{\ovalx}{\ovaly}{\vangledir}{\sedge}{\vx}{2}{above};
	\drawsedge{-\vanglepos}{\ovalx}{\ovaly}{-\vangledir}{\sedge}{\vx}{2}{below};
 	\drawsedge{180-\vanglepos}{\ovalx}{\ovaly}{180-\vangledir}{\sedge}{\vx}{2}{above};
 	\drawsedge{180+\vanglepos}{\ovalx}{\ovaly}{180+\vangledir}{\sedge}{\vx}{2}{below};
\end{tikzpicture}
\end{subfigure}
\begin{subfigure}{0.15\textwidth}
\begin{tikzpicture}
 	\pgfmathsetmacro\sedge{.75}
 	\pgfmathsetmacro\ovalx{.6}
 	\pgfmathsetmacro\ovaly{0.25}
	\pgfmathsetmacro\vx{0.1}
	\pgfmathsetmacro\vanglepos{30}
	\pgfmathsetmacro\vangledir{45}
	
 	\fill[black] (0,0) circle (\vx); 
 	
 	\coordinate (P1) at  ($(\vanglepos:0.6 and 0.25)$);
	\drawsedge{\vanglepos}{\vx}{\vx}{\vangledir}{\sedge}{\vx}{2}{above};
	\drawsedge{-\vanglepos}{\vx}{\vx}{-\vangledir}{\sedge}{\vx}{2}{below};
 	\drawsedge{180-\vanglepos}{\vx}{\vx}{180-\vangledir}{\sedge}{\vx}{2}{above};
 	\drawsedge{180+\vanglepos}{\vx}{\vx}{180+\vangledir}{\sedge}{\vx}{2}{below};
\end{tikzpicture}
\end{subfigure}
\begin{subfigure}{0.4\textwidth}
\begin{tikzpicture}
 	\pgfmathsetmacro\edge{1}
 	\pgfmathsetmacro\vedge{.75}
 	\pgfmathsetmacro\dashedvedge{1}
 	\pgfmathsetmacro\ovalx{.6}
 	\pgfmathsetmacro\ovaly{0.25}
	\pgfmathsetmacro\diffx{1.5}
	\pgfmathsetmacro\vx{0.1}
	
	 \draw (0,0) circle [x radius=\ovalx, y radius=\ovaly]; 
	 \draw (0,0 )  node {$\Delta_1$};
	 \draw (\ovalx, 0) -- (\ovalx + 2*\edge, 0);
	 \fill[black] (\ovalx + \edge, 0) circle (\vx);
	 \draw (2*\ovalx + 2*\edge,0) circle [x radius=\ovalx, y radius=\ovaly]; 
	 \draw (2*\ovalx + 2*\edge ,0 )  node {$\Delta_3$};
	 \draw  (\ovalx + \edge, 0) -- (\ovalx + \edge, -\vedge);
	 \draw (\ovalx + \edge,-\vedge-\ovalx) circle [x radius=\ovaly, y radius=\ovalx]; 
	 \draw (\ovalx + \edge,-\vedge-\ovalx )  node {$\Delta_2$};
	 
	 \draw (\ovalx + 2*\edge,-\vedge) node [text width= 3.9cm, below right]  {$(\Delta_1, \Delta_2, \Delta_3) \in \{3,3,3)$, $(2,4,4)$, $(2,3,6) \}$};
\end{tikzpicture}
\end{subfigure}

\caption{Dual graphs of resolutions of relevant log canonical but not log terminal singularities}
\label{figure-c}
\end{center}
\end{figure}

\end{enumerate}
\end{thm}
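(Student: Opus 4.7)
The plan is to turn the qualitative statement into a computation of the discrepancies $a_j$ appearing in $K_{Y'} = \pi^{*}(K_Y) + \sum_j a_j E_j$, and then read off which weighted trees force these rationals to satisfy $a_j \geq -1$ (log canonical) or $a_j > -1$ (log terminal).

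\textbf{Step 1 (Linear system for the discrepancies).} Intersecting the defining equation with each $E_j$ and using that $\pi^{*}(K_Y) \cdot E_j = 0$, together with the adjunction formula $K_{Y'} \cdot E_j = -2 - E_j^2$ (valid because each $E_j$ is smooth rational), one obtains
\[
\sum_{i} a_i \,(E_i \cdot E_j) \;=\; -2 - E_j^2, \qquad j = 1,\ldots,k.
\]
By Mumford's negative-definiteness theorem the intersection matrix is invertible, so the $a_j$ are uniquely determined rationals. After contracting $(-1)$-curves, we reduce to the minimal resolution, in which every $E_j^2 \leq -2$ and the dual graph stays a tree of smooth rational curves.

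\textbf{Step 2 (Chains).} If $\Gamma$ is a chain, the linear system above is tridiagonal, and a direct induction on the length yields $-1 < a_j < 0$ for each $j$. Hence every chain is log terminal (these are the cyclic quotient singularities $\tfrac{1}{n}(1,q)$), accounting for the left panel of \cref{figure-t}.

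\textbf{Step 3 (Trees with a single branch vertex of degree three).} Let $E_0$ be that vertex and let $\Gamma_1,\Gamma_2,\Gamma_3$ be the chains attached to it, with absolute determinants $\Delta_1,\Delta_2,\Delta_3$. Applying Cramer's rule to the system of Step~1, and using the standard continued-fraction identity expressing $\Delta(\Gamma)$ and the sub-determinants of $\Gamma \setminus E_0$ in terms of the $\Delta_i$ and $E_0^2$, one can write $a_0+1$ in the form
\[
a_0 + 1 \;=\; \frac{\Delta_1\Delta_2\Delta_3}{\Delta(\Gamma)}\Bigl(\tfrac{1}{\Delta_1}+\tfrac{1}{\Delta_2}+\tfrac{1}{\Delta_3} - 1\Bigr) \;-\; (E_0^2+2)\,(\text{non-negative correction}).
\]
Since $E_0^2 \leq -2$, this quantity is $\geq 0$ iff $E_0^2=-2$ and $1/\Delta_1+1/\Delta_2+1/\Delta_3\geq 1$. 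A finite enumeration of triples $(\Delta_1,\Delta_2,\Delta_3)$ with $\Delta_i\geq 2$ satisfying strict inequality yields exactly the Platonic triples $(2,2,n),(2,3,3),(2,3,4),(2,3,5)$ (the right panel of \cref{figure-t}, i.e.\ the Dynkin types $D_n,E_6,E_7,E_8$), and the three equality triples $(3,3,3),(2,4,4),(2,3,6)$ (the right panel of \cref{figure-c}, the affine types $\tilde E_6,\tilde E_7,\tilde E_8$). A parallel computation shows that the discrepancies of the non-central vertices are automatically $>-1$ once the central discrepancy is $\geq -1$.

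\textbf{Step 4 (Remaining tree shapes).} Trees with two branch vertices, each of degree three, produce the $\tilde D_n$-type configurations drawn in the first two panels of \cref{figure-c}; the same discrepancy calculation, adapted by iterating the one-branch case on each side, shows they realise exactly $a_0=-1$ at both branch vertices and hence give log canonical but not log terminal singularities. Any graph containing a vertex of degree $\geq 4$ (other than the $\tilde D$ ones), three or more branch vertices, or a vertex with $E_j^2\leq -3$ in the wrong position is ruled out by exhibiting a vertex whose discrepancy is $<-1$, typically by reducing through chain contractions to a forbidden triple $(\Delta_1,\Delta_2,\Delta_3)$ with $\sum 1/\Delta_i < 1$.

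\textbf{The main obstacle} will be Step~3: translating the abstract linear system into the clean arithmetic inequality $\sum 1/\Delta_i \geq 1$ (with equality pinpointing the log-canonical boundary) requires careful bookkeeping with Hirzebruch--Jung continued fractions for the three attached chains, and the subsequent identification of the admissible triples with the finite and affine $ADE$ Dynkin diagrams is the content of the classification. Since the theorem is a classical result of Kawamata (and the arrangement used here follows Alexeev), one would simply cite those references rather than reproduce the full case analysis.
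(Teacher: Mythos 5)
The paper does not actually prove this theorem: it is quoted as a classical result of Kawamata in the formulation of Alexeev, and the surrounding text explicitly calls it ``a special case of Kawamata's classification.'' So your closing remark --- that one would simply cite those references rather than reproduce the case analysis --- is precisely what the paper does, and to that extent your proposal and the paper coincide.

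The sketch you supply along the way is the standard discrepancy computation and is broadly on the right track (adjunction plus negative definiteness in Step 1, the Hirzebruch--Jung chain case in Step 2, the $\sum_i 1/\Delta_i$ trichotomy in Step 3). However, Step 3 as written contains a genuine error: you claim that $a_0+1\geq 0$ forces $E_0^2=-2$ at the branch vertex. This is false, and it is also inconsistent with your own displayed formula, since for $E_0^2\leq -3$ the term $-(E_0^2+2)\,(\text{non-negative correction})$ only \emph{increases} $a_0+1$. Concretely, a central $(-3)$-curve with three $(-2)$-curves attached (triple $(2,2,2)$, a dihedral quotient singularity) satisfies $a_0=-2/3>-1$ and is log terminal. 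In fact a direct computation gives
\begin{align*}
a_0+1=\frac{\Delta_1\Delta_2\Delta_3}{\Delta(\Gamma)}\left(\frac{1}{\Delta_1}+\frac{1}{\Delta_2}+\frac{1}{\Delta_3}-1\right)
\end{align*}
with no further dependence on $E_0^2$, which is exactly why the figures in the statement place no weight restriction on the branch vertex. Step 4 is likewise only a gesture at the remaining shapes (the $\widetilde{D}$-type configurations, vertices of degree $\geq 4$, several branch vertices) rather than an argument. None of this affects the validity of the theorem, which rests on the cited literature, but as a standalone proof the sketch would need these points repaired.
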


\subsection{Dual graphs of resolution of singularities of normal primitive compactifications of $\cc^2$} \label{primitive-dual-section}
Let $n \geq 0$, $\vec \omega = (\omega_0, \ldots, \omega_{n+1})$ be a primitive key sequence in normal form, $\vec \theta \in \ntorus$, and $\bar X \cong \xomegatheta$ be the corresponding primitive normal compactification of $\cc^2$. Let $C_\infty$ be the curve at infinity on $\bar X$. It turns out that one can associate a {\em formal descending Puiseux series}, i.e.\ a formal sum 
\begin{align*}
\tilde \phi(x,\xi) = a_1 x^{\beta_1} + \ldots + a_s x^{\beta_s} + \xi x^{\beta_{s+1}}
\end{align*}
where 
\begin{itemize}
\item $s \geq 0$,
\item $a_1, \ldots, a_s \in \cc^*$,
\item $\beta_1 >  \cdots > \beta_{s+1}$ are rational numbers,
\item $\xi$ is an indeterminate,
\end{itemize}
such that for each $f \in \cc[x,y]$, 
\begin{align}
\pole_{C_\infty}(f) = \omega_0 \deg_x\left(f|_{y = \tilde \phi(x,\xi)} \right)
\end{align}
Note that the normality of $\vec \omega$ implies that either $s = 0$, or $\beta_1$ is a positive rational number between $0$ and $1$ such that neither $\beta_1$ nor $1/\beta_1$ is an integer. \\

Let $d_j$ be the lowest common denominator of (reduced forms of) $\beta_1, \ldots, \beta_j$, $1 \leq j \leq s+1$. The sequence of {\em formal characteristic exponents} of $\tilde \phi$ is the sequence $\beta_{1} = \beta_{j_1} >  \cdots > \beta_{j_{l+1}} = \beta_{s+1}$ of exponents which satisfy the following property:
\begin{itemize}
\item $d_j = d_{j_k}$ for each $j = j_k, \ldots, j_{k +1} - 1$,
\item $d_{j_{k+1}} > d_{j_{k}}$ for each $k = 1, \ldots, l-1$. 
\end{itemize}
The {\em formal Newton pairs} of $\tilde \phi$ are $(q'_1, p_1), \ldots, (q'_{l+1}, p_{l+1})$, where 
\begin{align*}
p_k 
	&= 
	\begin{cases}
	d_{j_1} = d_1 & \text{if}\ k = 1,\\
	d_{j_k}/d_{j_{k-1}} &\text{if}\ k = 2, \ldots, l+1.
	\end{cases}\\
q'_k 
	&= 
	\begin{cases}
		d_1\beta_1 & \text{if}\ k = 1,\\
		d_{j_k}(\beta_{j_k} - \beta_{j_{k-1}}) &\text{if}\ k = 2, \ldots, l+1.
	\end{cases}
\end{align*}

The formal Newton pairs are completely determined by (and also completely determine) the {\em essential subsequence} (\cref{key-seqn}) of the key sequence $\vec \omega$. The relation among them is as follows:

\begin{prop}[{\cite[Propositoin A.1]{sub2-2}}] \label{omega-p}
Let $\vec \omega_e = (\omega_{i_0}, \ldots, \omega_{i_{l'+1}})$ be the essential subsequence of $\vec \omega$ and $\alpha_1, \ldots, \alpha_{n+1}$ be as in \cref{key-seqn}. Then $l' = l$, and for each $j$, $1 \leq j \leq l+1$, 
\begin{align}
p_j &= \alpha_{i_j}, \\
\beta_{i_j} &:= \frac{1}{\omega_0}(\omega_{i_j} - \sum_{k = 1}^{j-1}(\alpha_{i_k} - 1)\omega_{i_k}),
							\quad 1 \leq j \leq l+1.
\end{align}
\end{prop}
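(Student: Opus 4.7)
The plan is to show that both sides of the claimed identities encode the same combinatorial data attached to the branch of $C_\infty$ at infinity: the Puiseux series $\tilde\phi$ gives this data analytically via its characteristic exponents and Newton pairs, while the key sequence gives it algebraically via the gcd-drops $\alpha_k$ and the pole orders $\omega_k$. The bridge between the two is the pole-order formula $\pole_{C_\infty}(f) = \omega_0 \deg_x(f|_{y=\tilde\phi(x,\xi)})$, which translates degree computations against $\tilde\phi$ into computations against $\vec\omega$.

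First, I would establish the base case. Applying the pole-order formula to $f = x$ and $f = y$ yields $\pole_{C_\infty}(x) = \omega_0$ and $\pole_{C_\infty}(y) = \omega_0 \beta_1$, so $\beta_1 = \omega_1/\omega_0$. Under normality condition \eqref{generally-alpha}, the reduced denominator of $\omega_1/\omega_0$ equals $\omega_0/\gcd(\omega_0,|\omega_1|) = \alpha_1 \geq 2$, which forces $i_1 = 1$; simultaneously this is the first index $j_1$ where the denominator $d_{j_1}$ jumps above $1$. So $d_1 = \alpha_1$, giving $p_1 = \alpha_{i_1}$ and $\beta_{i_1} = \omega_{i_1}/\omega_0$ as required.

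Second, I would run an induction on $j$. Assume the identifications $\beta_{i_k} = \frac{1}{\omega_0}\bigl(\omega_{i_k} - \sum_{r=1}^{k-1}(\alpha_{i_r}-1)\omega_{i_r}\bigr)$ and $p_k = \alpha_{i_k}$ have been verified for $k < j$. The recursive construction of approximate roots from $\tilde\phi$ produces, at stage $j-1$, a polynomial $g_{j-1}(x,y)$ whose pole order equals $\omega_{i_{j-1}}$ and whose substitution into $\tilde\phi$ controls the next characteristic exponent. By the pole-order formula applied to $g_{j-1}^{\alpha_{i_{j-1}}}$ and the uniqueness of the representation $\alpha_k\omega_k = \sum_{r<k} \beta_{k,r}\omega_r$ from \cref{unique-remark}, the next characteristic exponent $\beta_{j_j}$ is forced to equal $\frac{1}{\omega_0}\bigl(\omega_{i_j} - \sum_{r<j}(\alpha_{i_r}-1)\omega_{i_r}\bigr)$; the telescoping comes from the fact that each previous stage contributes exactly $(\alpha_{i_r}-1)\omega_{i_r}/\omega_0$ to the $x$-degree during substitution. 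The denominator jump $d_{j_j}/d_{j_{j-1}}$ is then $\omega_0/\gcd(\omega_0, \omega_{i_j}, \ldots)$ divided by its predecessor, which by definition of $e_k$ equals $\alpha_{i_j}$, giving $p_j = \alpha_{i_j}$.

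Finally, the equality $l' = l$ comes out of the induction for free: each characteristic exponent of $\tilde\phi$ corresponds under this bijection to exactly one index $i_k$ where $\alpha_{i_k} \geq 2$, and the terminal index $j_{l+1} = s+1$ matches $i_{l+1} = n+1$ because the indeterminate $\xi$ plays the role of the last generator $\omega_{n+1}$. The main obstacle I anticipate is the bookkeeping around the telescoping sum $\sum(\alpha_{i_k}-1)\omega_{i_k}$: one must carefully track how partial substitutions of $\tilde\phi$ into the approximate roots accumulate pole-order contributions across all previous characteristic pairs, and verify that the non-essential indices (where $\alpha_k = 1$) contribute nothing to either side — this matches the fact that a trivial denominator jump contributes no new Puiseux pair.
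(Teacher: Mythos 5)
The paper does not prove this proposition; it is imported verbatim from \cite[Proposition A.1]{sub2-2}, so there is no in-paper argument to compare yours against. Judged on its own merits, your outline follows the standard (and essentially inevitable) route: treat the essential entries $\omega_{i_j}$ as pole orders of approximate roots of the branch at infinity, push everything through the pole-order formula $\pole_{C_\infty}(f)=\omega_0\deg_x(f|_{y=\tilde\phi(x,\xi)})$, and match the resulting recursion against the one defining characteristic exponents and Newton pairs. The base case, the identification of $p_j$ with the gcd-drop $\alpha_{i_j}$ via the denominator jumps $d_{j_k}=\omega_0/e_{i_k}$, and the matching of non-essential indices with non-characteristic exponents are all correct. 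The telescoped formula is also consistent: unwinding $\omega_{i_j}=\omega_0\beta_{i_j}+\sum_{k<j}(\alpha_{i_k}-1)\omega_{i_k}$ reproduces the contact-order count $\sum_{k<j}(\alpha_{i_k}-1)\alpha_{i_{k+1}}\cdots\alpha_{i_{j-1}}\beta_{i_k}+\beta_{i_j}$ for the $x$-degree of the $j$-th approximate root evaluated along $\tilde\phi$ (both sides satisfy the recursion $S_{j+1}=\alpha_{i_j}S_j+(\alpha_{i_j}-1)\beta_{i_j}$ with $S_1=0$), so the bookkeeping you flag as the main obstacle does close up.

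Two things need repair before this is a proof rather than a plan. First, the inductive step is asserted exactly where the content lives: you should take the approximate roots to be the explicit coordinates $y_{k+1}=y_k^{\alpha_k}-\theta_k\prod_j y_j^{\beta_{k,j}}$ from \cref{omega-theta-defn} restricted to the affine chart (their pole orders are $\omega_{k+1}$ by construction and by \cref{unique-remark}), and then actually carry out the leading-term cancellation showing that the drop from $\alpha_k\omega_k$ to $\omega_{k+1}$ is what produces the next characteristic exponent. The phrase ``each previous stage contributes exactly $(\alpha_{i_r}-1)\omega_{i_r}/\omega_0$'' conceals the fact that the conjugate roots of the $j$-th approximate root split into blocks of sizes $(\alpha_{i_k}-1)\alpha_{i_{k+1}}\cdots\alpha_{i_{j-1}}$ with contact $\beta_{i_k}$, and the equality of that weighted sum with $\sum_{k<j}(\alpha_{i_k}-1)\omega_{i_k}/\omega_0$ is itself the induction hypothesis being reused; this should be made explicit. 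Second, a small misattribution: $\alpha_1\geq 2$ (hence $i_1=1$) does not follow from the condition $\omega_1/\omega_0\notin\{1/k\}$ in the definition of normal form, but from primitivity together with $\omega_0>\omega_1$, since $\alpha_1=1$ would force $\omega_0\mid\omega_1$ with $0<\omega_1<\omega_0$, which is impossible; the excluded values $1/k$ only rule out degenerate shapes of the first Newton pair, not a trivial first gcd-drop.
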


\newcommand{\drawhblock}[2]{

	\begin{scope}[shift={(#1,0)}]
		\draw (0,0) circle [x radius=\ovalx, y radius=\ovaly]; 
		\draw (0, 0 )  node {#2};
		\draw (\ovalx, 0) -- (\ovalx + \edge, 0);
		\fill[black] (\ovalx + \edge, 0) circle (\vx);
	\end{scope}
}

\newcommand{\drawblock}[3]{
	
	\drawhblock{#1}{#2}

	\begin{scope}[shift={(#1,0)}]
		\draw (\ovalx + \edge, 0) -- (\ovalx + \edge, -\vedge);
		\draw (\ovalx + \edge,-\vedge-\ovalx) circle [x radius=\ovaly, y radius=\ovalx]; 
		\draw (\ovalx + \edge,-\vedge-\ovalx )  node {#3};
	\end{scope}
}

\begin{thm}[{\cite[Proposition 4.2]{sub2-1}}] \label{primitive-resolution-thm}
\mbox{}
\begin{enumerate}
\item If $p_{l+1} = 1$, then there is a resolution of singularities of $\bar X$ such that the dual graph is of the form displayed in \cref{primitive-resolution}.
\item  If $p_{l+1} > 1$, then there is a resolution of singularities of $\bar X$ such that the dual graph is a disjoint union of a chain $\Gamma$ with $\Delta(\Gamma) = p_{l+1}$ and a graph of the form displayed in \cref{primitive-resolution}.

\begin{figure}[htp]
\begin{center}

%
%
%
%

\begin{tikzpicture}
	\pgfmathsetmacro\edge{1}
	\pgfmathsetmacro\vedge{.75}
	\pgfmathsetmacro\dashededge{2}
	\pgfmathsetmacro\ovalx{.6}
	\pgfmathsetmacro\ovaly{0.25}
	\pgfmathsetmacro\vx{0.1}
	
	\drawblock{0}{$|q'_1|$}{$p_1$};
	\draw[dashed] (\ovalx + \edge, 0) -- (\ovalx + \edge + \dashededge , 0);
	\drawblock{2*\ovalx + \edge + \dashededge}{$|q'_l|$}{$p_l$};
	\draw (3*\ovalx + 2*\edge + \dashededge, 0) -- (3*\ovalx + 3*\edge + \dashededge , 0);
	\draw (4*\ovalx + 3*\edge + \dashededge,0) circle [x radius=\ovalx, y radius=\ovaly]; 
	\draw (4*\ovalx + 3*\edge + \dashededge,0) node {$|q'_{l+1}|$}; 
\end{tikzpicture}

\caption{Dual graphs in \cref{primitive-resolution-thm}}
\label{primitive-resolution}
\end{center}
\end{figure}
\end{enumerate}

\end{thm}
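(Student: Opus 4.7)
The plan is to construct an explicit resolution of $\bar X$ by a canonical sequence of blow-ups at infinitely near points along the (unique) curve at infinity $C_\infty$, directed by the formal Puiseux series $\tilde\phi$, and then read off the weighted dual graph block-by-block from the formal Newton pairs.

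First, I would embed $\bar X$ via \cref{projective-embedding} in a weighted projective space; the curve $C_\infty$ is irreducible and, by normality of $\vec\omega$, the geometry of $(\bar X, C_\infty)$ is completely captured by the Puiseux data $\tilde\phi$. I would then proceed by induction on $k = 1, \ldots, l+1$ with the following induction hypothesis: after the blow-ups dictated by the first $k$ Newton pairs $(q'_1, p_1), \ldots, (q'_k, p_k)$, the exceptional configuration is the first $k$ blocks of \cref{primitive-resolution}, and the strict transform of $C_\infty$ meets exactly one of the exceptional curves at a single smooth point, which serves as the center of the next round. The inductive step is the classical Hirzebruch--Jung / toric continued-fraction calculation: the $p_k$-fold tangential contact prescribed by the Puiseux exponent $\beta_{j_k}$ forces a sequence of blow-ups whose exceptional divisors form a chain with determinant $p_k$ attached to a distinguished vertex of self-intersection $-|q'_k|$, and the strict transform of the main branch enters the next block through the last vertex of that chain. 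The translation between Newton-pair data and the arithmetic of $\vec\omega$ is supplied by \cref{omega-p}.

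The endgame splits on whether $p_{l+1} = 1$. If $p_{l+1} = 1$, the final Newton pair produces no extra chain, only a single terminal exceptional curve of self-intersection $-|q'_{l+1}|$ attached linearly to the preceding block, completing the graph of \cref{primitive-resolution}. If $p_{l+1} > 1$, the continued-fraction procedure produces an additional chain $\Gamma$ with $\Delta(\Gamma) = p_{l+1}$; however, because the strict transform of $C_\infty$ (the branch that glued consecutive blocks together) is itself contracted during this last stage, $\Gamma$ appears as a disjoint component of the dual graph rather than being joined to the main string, which is precisely the statement in case (2). The main obstacle I anticipate is the careful bookkeeping required to verify, at every stage, that the self-intersection number of the distinguished vertex is exactly $-|q'_k|$ after the intermediate blow-ups, and that the attached chain has invariant $\Delta$ equal to $p_k$; both are combinatorial checks using Hirzebruch--Jung continued fractions together with \cref{omega-p}, but they are where essentially all of the content of the proof resides.
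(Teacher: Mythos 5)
Your overall strategy---realize the resolution as the minimal embedded resolution of a curve germ at the point at infinity, directed by the formal Newton pairs, and compute the dual graph block-by-block via Hirzebruch--Jung continued fractions---is the intended one: the paper does not prove this statement (it is imported from \cite[Proposition 4.2]{sub2-1}), but the appendix recapitulates exactly this construction in observations \ref{resolution-1}--\ref{minimal-observation} and \cref{c-fraction-lemma}. However, two points in your write-up are genuinely wrong. First, and most seriously, your explanation of why the graph disconnects when $p_{l+1}>1$ is backwards. The dual graph in the theorem is that of the exceptional locus of the resolution $\bar X'\to\bar X$, and the irreducible curve in $\bar X'$ realizing the divisorial valuation $\ord_{C_\infty}$ (the last exceptional curve of the embedded resolution over $\pp^2$, the one a generic curvette of $\tilde\psi$ meets transversally) is precisely the component that is \emph{not} contracted: it maps onto $C_\infty\subset\bar X$. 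It is therefore \emph{omitted} from the dual graph, and since it is the node joining the $|q'_{l+1}|$-chain to the $p_{l+1}$-chain, its omission is what disconnects the picture. You instead assert that ``the strict transform of $C_\infty$ \ldots\ is itself contracted,'' which, if true, would make it a vertex of the dual graph and keep the graph connected---so your justification of case (2) does not work as stated.

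Second, two smaller but real inaccuracies. You misread \cref{primitive-resolution}: the ovals labelled $|q'_k|$ denote \emph{chains} $\Gamma$ with $\Delta(\Gamma)=|q'_k|$, not single vertices of self-intersection $-|q'_k|$, so your inductive hypothesis (``a chain with determinant $p_k$ attached to a distinguished vertex of self-intersection $-|q'_k|$'') is not the configuration you need to produce. And you describe blowing up points ``along $C_\infty$'' on $\bar X$ itself; since $\bar X$ is singular along $C_\infty$, the blow-ups must be performed on the smooth model ($\pp^2$ at the point $P$ at infinity, resolving the germ of the union of the line at infinity and a generic curvette of $\tilde\psi$), and one must then separately argue that the resulting surface dominates $\bar X$ and resolves its singularities---this bridge is the actual content of the cited proposition and should not be elided.
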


\subsection{Primitive compactifications with log terminal and log canonical singularities} \label{ptc-section}

In this section we combine the results of preceding two sections to classify primitive normal compactifications of $\cc^2$, including all Picard rank on $\gtwoa$-surfaces, with log terminal and log canonical singularities.\\

We continue to use the notation from \cref{primitive-dual-section}. In particular, $\vec \omega = (\omega_0, \ldots, \omega_{n+1})$ is a primitive key sequence in normal form, $\vec \theta \in \ntorus$, and $\bar X := \xomegatheta$ is the corresponding primitive normal compactification of $\cc^2$. The following are straightforward consequences of the normality of $\vec \omega$ and \cref{omega-p}:
\begin{prooflist}
\item \label{ob1} $\gcd(q'_1, p'_1) = 1$;
\item \label{ob2} if $l \geq 1$, then $p_1 > 2$, $p_1 > q_1$, and neither $q'_1/p_1$ nor $p_1/q'_1$ is an integer.
\end{prooflist}

\begin{thm} \label{tc-thm}
Let $\vec \omega_e$ be the {\em essential subsequence} (\cref{key-seqn}) of $\vec \omega$. 
\begin{enumerate}
\item \label{pt} $\bar X$ has only log terminal singularities iff $\vec \omega_e$ is one of the key sequences in the first column of \cref{t-table}. $\bar X$ is in addition a $\gtwoa$-surface iff it satisfies the conditions from the 4th column of \cref{t-table}. 
\item \label{pc} $\bar X$ has a log canonical singularity which is not log terminal iff $\vec \omega_e$ is the key sequence from \cref{c-table}. $\bar X$ is in addition a $\gtwoa$-surface iff it satisfies the condition from the 4th column of \cref{c-table}. 
\end{enumerate}
\end{thm}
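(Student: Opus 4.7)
The plan is to combine the dual-graph description of resolutions of primitive normal compactifications of $\cc^2$ (\cref{primitive-resolution-thm}), Kawamata--Alexeev's dual-graph classification of log terminal and log canonical surface singularities (\cref{ct-thm}), and the $\gtwoa$-surface criterion $\omega_0 + \kbarX < 0$ from \cref{g2a-thm}.

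By \cref{primitive-resolution-thm} the exceptional locus of a resolution of $\bar X$ is either the connected caterpillar shown in \cref{primitive-resolution} (when $p_{l+1} = 1$), or this caterpillar together with a disjoint chain of determinant $p_{l+1}$ (when $p_{l+1} > 1$). A pure chain is automatically of log-terminal type (first pattern of \cref{figure-t}), so the classification question reduces to matching the caterpillar against the patterns of \cref{figure-t} and \cref{figure-c}. The caterpillar has exactly $l$ valence-three branch vertices, one at the foot of each downward arm labelled $p_j$, $1 \leq j \leq l$. Every pattern in \cref{figure-t} has at most one branch vertex, and the two-branch-vertex pattern of \cref{figure-c} forces each of its four outer arms to be a single $(-2)$-vertex, i.e.\ all four outer arm determinants must equal $2$. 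Since observation \ref{ob2} gives $p_1 > 2$ whenever $l \geq 1$, this two-branch-vertex possibility is ruled out, and hence $l \in \{0,1\}$.

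The case $l = 0$ gives a pure chain, which is always log terminal; here $\vec \omega_e = (\omega_0,\omega_1)$, accounting for the length-two row of \cref{t-table}. The case $l = 1$ gives a three-armed star whose arm determinants form the unordered triple $\{|q'_1|, p_1, |q'_2|\}$, which by \cref{ct-thm} must be one of the seven admissible triples listed there. Using observations \ref{ob1}--\ref{ob2} together with the normality condition \ref{generally-alpha} (which forces $q'_1 \geq 2$), one enumerates all compatible $(p_1, q'_1, q'_2)$ and translates each back to an essential subsequence $\vec \omega_e = (\omega_0, \omega_{i_1}, \omega_{n+1})$ via \cref{omega-p}, using $p_j = \alpha_{i_j}$ and inverting the linear relations expressing $q'_j$ in terms of $\omega_0$ and the $\omega_{i_k}$. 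Finally, for every $\vec \omega_e$ on the resulting list one computes $\omega_0 + \kbarX$ from \eqref{kbarX} (noting that only essential indices contribute to the sum $\sum_k(\alpha_k-1)\omega_k$) and, applying \cref{g2a-thm}, selects those with $\omega_0 + \kbarX < 0$ to fill the $\gtwoa$-surface column of each table.

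The main obstacle is the arithmetic bookkeeping in the $l = 1$ case: after fixing an abstract arm-triple one must simultaneously enforce $\gcd(q'_1, p_1) = 1$, $q'_1 \geq 2$, and the normality conditions of \cref{normal-key-defn} (so that \cref{projective-normal-embedding} gives a unique compactification for each $\vec \omega_e$), which trim the naive list of triples down to the entries actually appearing in \cref{t-table} and \cref{c-table}. Once this enumeration is carried out, the $\gtwoa$ refinement reduces to a direct sign check of $\omega_0 + \kbarX$ for each surviving $\vec \omega_e$.
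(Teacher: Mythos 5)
Your proposal is correct and follows essentially the same route as the paper: the paper likewise disposes of the classification part by citing observations \ref{ob1}, \ref{ob2}, \cref{omega-p}, \cref{ct-thm} and \cref{primitive-resolution-thm} (your explicit reduction to $l\leq 1$ and the arm-triple enumeration is just a fleshed-out version of that one-line citation), and then settles the $\gtwoa$ column via \cref{g2a-thm} by computing $\omega_0+\kbarX$, which for $\vec\omega_e=(p_1p_2,q_1p_2,q_1p_1p_2-r)$ comes out to $r-1-q_1p_2$ — exactly the sign check you describe.
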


\begin{proof}
The first statements of assertions \eqref{pt} and \eqref{pc} follow from observations \ref{ob1}, \ref{ob2}, \cref{omega-p,ct-thm,primitive-resolution-thm}. For the criteria for having $\gtwoa$-surface structures, we use \cref{g2a-thm}. If $n = 0$, then $\kbarX + \omega_0 = -\omega_1 - 1 < 0$, so that $\bar X$ is a $\gtwoa$-surface; this explains the first two rows of \cref{t-table}. The key sequences in \cref{c-table} and the remaining cases of \cref{t-table} are of the form $(p_1p_2, q_1p_2, q_1p_1p_2 - r)$, with $\alpha_1 = p_1$ and $\alpha_2 = p_2$. It follows that
\begin{align}
k_{\bar X} + \omega_0
	&= - q_1p_1p_2 + r - 1 + (p_1 -1)q_1p_2 
	= r - 1 - q_1p_2
\end{align}
Therefore $k_{\bar X} + \omega_0 < 0$ iff $q_1p_2 \geq r$. This explains the criteria for $\gtwoa$-surface structures in the 4th columns of \cref{t-table,c-table}.
\end{proof}

\newcommand\voval[2]{
\begin{scope}[shift={#1}]
	\draw (0,0) circle [x radius=\ovaly, y radius=\ovalx]; 
	\draw (0,0) node {#2}; 
\end{scope}
}

\newcommand\hoval[2]{
\begin{scope}[shift={#1}]
	\draw (0,0) circle [x radius=\ovalx, y radius=\ovaly]; 
	\draw (0,0) node {#2}; 
\end{scope}
}

\newcommand{\drawthreeblock}[4]{
	
	\drawblock{#1}{#2}{#3}

	\begin{scope}[shift={(#1,0)}]
		\draw (\ovalx + \edge, 0) -- (\ovalx + 2*\edge, 0);
		\draw (2*\ovalx + 2*\edge,0) circle [x radius=\ovalx, y radius=\ovaly]; 
		\draw (2*\ovalx + 2*\edge,0)  node {#4};
	\end{scope}
}

\begin{center}

\small
\begin{table}[htp]
\begin{tabulary}{\textwidth}{|p{3.05cm}|p{2.3cm}|b{4.8cm}|b{2.25cm}|}
\hline
$\vec \omega_e$ & Formal Newton pairs & Dual graph of resolution of singularities & $\gtwoa$-surface iff 
	\bigstrut \\ \hline
$(1,1)$ & $(1,1)$ & - & always 
	\bigstrut \\ \hline
\multirow{2}[4]{2cm}{$(p,q)$ \newline $p > q \geq 1$, $\gcd(p,q) = 1$} 
& \multirow{2}[4]{*}{$(q,p)$} 
& 
\begin{tikzpicture}[scale = 0.75]
 	\pgfmathsetmacro\ovalx{.6}
 	\pgfmathsetmacro\ovaly{0.25}
 	\pgfmathsetmacro\dtext{4.35}
 		
 	\voval{ (0,0)}{$p$};
 	\draw ( \dtext,0) node [right] {if $q = 1$};
\end{tikzpicture}
& 
\multirow{2}[4]{*}{always} 
\bigstrut \\ \cline{3-3}
&
& 
\begin{tikzpicture}[scale = 0.75]
 	\pgfmathsetmacro\ovalx{.6}
 	\pgfmathsetmacro\ovaly{0.25}
 	\pgfmathsetmacro\dtext{4}
 	\pgfmathsetmacro\dU{0.15}
 	\pgfmathsetmacro\wU{0.2} 	

 	\hoval{(0,0)}{$q$};
	\draw (\ovalx+\dU,0) node[text width=\wU] {$\bigcup$}; 

 	\voval{(\ovalx+\ovaly+3*\dU+\wU,0)}{$p$};
 	 	
 	\draw (\dtext,0) node [right] {if $q > 1$};
\end{tikzpicture}
&
\bigstrut \\ \hline
\multirow{2}[4]{3.5cm}{$(p_1p_2, q_1p_2, q_1p_1p_2-1)$ \newline $p_1 > q_1 > 1$, $\gcd(p_1,q_1) = 1$, $p_2 \geq 1$} 
&$(q_1, p_1), (- 1,p_2)$
&
\begin{tikzpicture}[scale = 0.75]
	\pgfmathsetmacro\edge{0.5}
	\pgfmathsetmacro\vedge{.35}
	\pgfmathsetmacro\ovalx{.6}
	\pgfmathsetmacro\ovaly{0.25}
	\pgfmathsetmacro\vx{0.1}
	\pgfmathsetmacro\dtext{4}	
	
	\drawblock{0}{$q_1$}{$p_1$};
	\draw ( \dtext,0) node [right] {if $p_2 = 1$};

\end{tikzpicture}
& 
\multirow{2}[4]{*}{always} 
\bigstrut \\ \cline{3-3}
&
& 
\begin{tikzpicture}[scale = 0.75]
	\pgfmathsetmacro\edge{0.5}
	\pgfmathsetmacro\vedge{.35}
	\pgfmathsetmacro\ovalx{.6}
	\pgfmathsetmacro\ovaly{0.25}
	\pgfmathsetmacro\vx{0.1}
	\pgfmathsetmacro\dtext{4}	
 	\pgfmathsetmacro\dU{0.15}
 	\pgfmathsetmacro\wU{0.2} 	
	
	\drawblock{0}{$q_1$}{$p_1$};
	\draw (\ovalx+\edge+\ovaly+\dU,0) node[text width=\wU] {$\bigcup$}; 
	\voval{(\ovalx+\edge+2*\ovaly+3*\dU+\wU,0)}{$p_2$};
	\draw ( \dtext,0) node [right] {if $p_2>1$};

\end{tikzpicture}
& 
\bigstrut \\ \hline
\multirow{2}[4]{3.5cm}{$(p_1p_2, 2p_2, 2p_1p_2-2)$ \newline $p_1, p_2$ odd, $p_1 > 2$, $p_2 \geq 1$} 
&$(2, p_1), (- 2,p_2)$
&
\begin{tikzpicture}[scale = 0.75]
	\pgfmathsetmacro\edge{0.5}
	\pgfmathsetmacro\vedge{.35}
	\pgfmathsetmacro\ovalx{.6}
	\pgfmathsetmacro\ovaly{0.25}
	\pgfmathsetmacro\vx{0.1}
	\pgfmathsetmacro\dtext{4}	
	
	\drawthreeblock{0}{$2$}{$p_1$}{2};
	\draw ( \dtext,0) node [right] {if $p_2 = 1$};

\end{tikzpicture}
& 
\multirow{2}[4]{*}{always} 
\bigstrut \\ \cline{3-3}
&
& 
\begin{tikzpicture}[scale = 0.75]
	\pgfmathsetmacro\edge{0.5}
	\pgfmathsetmacro\vedge{.35}
	\pgfmathsetmacro\ovalx{.6}
	\pgfmathsetmacro\ovaly{0.25}
	\pgfmathsetmacro\vx{0.1}
	\pgfmathsetmacro\dtext{4}	
 	\pgfmathsetmacro\dU{0.15}
 	\pgfmathsetmacro\wU{0.2} 	
	
	\drawthreeblock{0}{$2$}{$p_1$}{2};
	\draw (3*\ovalx+2*\edge+\dU,0) node[text width=\wU] {$\bigcup$}; 
	\voval{(3*\ovalx+2*\edge+\ovaly+3*\dU+\wU,0)}{$p_2$};
	\draw ( \dtext,0) node [right] {if $p_2 >1$};

\end{tikzpicture}
& 
\bigstrut \\ \hline
\multirow{2}[4]{3.5cm}{$(p_1p_2, 2p_2, 2p_1p_2-r)$ \newline $(p_1, r) \in \{(3,3)$, $(3,4)$, $(3,5)$, $(5,3)\}$,
 $p_2 \geq 1$, $\gcd(p_2,r) = 1$} 
&$(2, p_1), (- r,p_2)$
&
\begin{tikzpicture}[scale = 0.75]
	\pgfmathsetmacro\edge{0.5}
	\pgfmathsetmacro\vedge{.35}
	\pgfmathsetmacro\ovalx{.6}
	\pgfmathsetmacro\ovaly{0.25}
	\pgfmathsetmacro\vx{0.1}
	\pgfmathsetmacro\dtext{4}	
	
	\drawthreeblock{0}{$2$}{$p_1$}{$r$};
	\draw ( \dtext,0) node [right] {if $p_2 = 1$};

\end{tikzpicture}
& 
never
\bigstrut \\ \cline{3-4}
&
& 
\begin{tikzpicture}[scale = 0.75]
	\pgfmathsetmacro\edge{0.5}
	\pgfmathsetmacro\vedge{.35}
	\pgfmathsetmacro\ovalx{.6}
	\pgfmathsetmacro\ovaly{0.25}
	\pgfmathsetmacro\vx{0.1}
	\pgfmathsetmacro\dtext{4}	
 	\pgfmathsetmacro\dU{0.15}
 	\pgfmathsetmacro\wU{0.2} 	
	
	\drawthreeblock{0}{$2$}{$p_1$}{$r$};
	\draw (3*\ovalx+2*\edge+\dU,0) node[text width=\wU] {$\bigcup$}; 
	\voval{(3*\ovalx+2*\edge+\ovaly+3*\dU+\wU,0)}{$p_2$};
	\draw ( \dtext,0) node [right] {if $p_2 >1$};

\end{tikzpicture}
& 
$p_2 \geq 3$ if $(p_1, r) $ $=$ $(3,5)$, $p_2 \geq 2$ otherwise
\bigstrut \\ \hline
\multirow{2}[4]{3.5cm}{$(p_1p_2, 3p_2, 3p_1p_2-2)$ \newline $p_1 = 4,5$,
 $p_2 \geq 1$, $\gcd(p,2) = 1$} 
&$(3, p_1), (- 2,p_2)$
&
\begin{tikzpicture}[scale = 0.75]
	\pgfmathsetmacro\edge{0.5}
	\pgfmathsetmacro\vedge{.35}
	\pgfmathsetmacro\ovalx{.6}
	\pgfmathsetmacro\ovaly{0.25}
	\pgfmathsetmacro\vx{0.1}
	\pgfmathsetmacro\dtext{4}	
	
	\drawthreeblock{0}{$3$}{$p_1$}{$2$};
	\draw ( \dtext,0) node [right] {if $p_2 = 1$};

\end{tikzpicture}
& 
\multirow{2}[4]{*}{always} 
\bigstrut \\ \cline{3-3}
&
& 
\begin{tikzpicture}[scale = 0.75]
	\pgfmathsetmacro\edge{0.5}
	\pgfmathsetmacro\vedge{.35}
	\pgfmathsetmacro\ovalx{.6}
	\pgfmathsetmacro\ovaly{0.25}
	\pgfmathsetmacro\vx{0.1}
	\pgfmathsetmacro\dtext{4}	
 	\pgfmathsetmacro\dU{0.15}
 	\pgfmathsetmacro\wU{0.2} 	
	
	\drawthreeblock{0}{$3$}{$p_1$}{$2$};
	\draw (3*\ovalx+2*\edge+\dU,0) node[text width=\wU] {$\bigcup$}; 
	\voval{(3*\ovalx+2*\edge+\ovaly+3*\dU+\wU,0)}{$p_2$};
	\draw ( \dtext,0) node [right] {if $p_2 >1$};

\end{tikzpicture}
& 
\bigstrut \\ \hline
 \end{tabulary}
 \caption{Log terminal primitive compactifications}
\label{t-table}
\end{table}
\end{center}

\begin{center}

\small
\begin{table}[htp]
\begin{tabulary}{\textwidth}{|b{3.05cm}|b{2.3cm}|b{4.8cm}|b{2.25cm}|}
\hline
$\vec \omega_e$ & Formal Newton pairs & Dual graph of resolution of singularities & $\gtwoa$-surface iff 
	\bigstrut \\ \hline
$(3p, 2p, 6p-6)$ \newline
 $p \geq 2$, $\gcd(p,6) = 1$
&$(2, 3), (- 6,p)$
& 
\begin{tikzpicture}[scale = 0.75]
	\pgfmathsetmacro\edge{0.5}
	\pgfmathsetmacro\vedge{.35}
	\pgfmathsetmacro\ovalx{.6}
	\pgfmathsetmacro\ovaly{0.25}
	\pgfmathsetmacro\vx{0.1}
	\pgfmathsetmacro\dtext{4}	
 	\pgfmathsetmacro\dU{0.15}
 	\pgfmathsetmacro\wU{0.2} 	
	
	\drawthreeblock{0}{$2$}{$3$}{$6$};
	\draw (3*\ovalx+2*\edge+\dU,0) node[text width=\wU] {$\bigcup$}; 
	\voval{(3*\ovalx+2*\edge+\ovaly+3*\dU+\wU,0)}{$p$};
\end{tikzpicture}
& 
$p \geq 3$ 
\bigstrut \\ \hline
 \end{tabulary}
 \caption{Primitive compactifications which are log canonical but not log terminal}
\label{c-table}
\end{table}
\end{center}

\appendix

\section{Proof of \cref{action-lemma}} \label{acsection}

At first we prove the $(\im)$ implication. The compatibility of the action implies that
\begin{subequations}
\begin{align}
a(t_1 + t_1',t_2 + t_2') &= a(t_1,t_2)a(t'_1,t'_2)\label{a-eqn} \\
b(t_1 + t_1',t_2 + t_2') &= b(t_1,t_2)b(t'_1,t'_2), \label{b-eqn} \\
c(t_1 + t_1',t_2 + t_2') &= b(t_1,t_2)c(t'_1,t'_2) + c(t_1,t_2), \label{c-eqn} \\
f(t_1 + t_1',t_2 + t_2',y) &= a(t_1,t_2)f(t'_1,t'_2,y) +  f(t_1,t_2,b(t'_1,t'_2)y + c(t'_1,t'_2)),\ \label{b-i-eqn} \text{where}\\
f(t_1,t_2,y) &:= \sum_{i=0}^m b_i(t_1,t_2)y^i
\end{align}
\end{subequations}
Since $a,b$ are non-zero polynomials in $(t_1,t_2)$, identities \eqref{a-eqn} and \eqref{b-eqn} imply that $a(t_1,t_2) = b(t_1,t_2) = 1$ for all $(t_1,t_2) \in \GG^2_a$. Identity \eqref{c-eqn} then implies that $c$ is a linear function in $(t_1,t_2)$, i.e.\ $c(t_1,t_2) = c_1t_1 + c_2 t_2$ for some $c_1,c_2 \in \cc$. Consequently identity \eqref{b-i-eqn} implies that 
\begin{align}
\sum_{i=0}^m (b_i(t_1 + t_1',t_2 + t_2') - b_i(t'_1,t'_2))y^i = \sum_{i=0}^m b_i(t_1,t_2)(y+ c_1t'_1+ c_2 t'_2)^i \label{b-i-eqn'}
\end{align}
If $c_1 = c_2 = 0$, then \eqref{b-i-eqn'} implies that each $b_i$ is linear and the action is given by 
\begin{align}
(t_1,t_2) \cdot (x,y) = \left(x + \sum_{i=0}^m b_i(t_1, t_2)y^i, y \right)
\end{align}
Now assume $(c_1, c_2) \neq (0,0)$. For all $\lambda \in \cc$, plugging in $(t'_1, t'_2) = (\lambda c_2, -\lambda c_1)$ in \eqref{b-i-eqn'} gives that 
\begin{align*}
\sum_{i=0}^m (b_i(t_1 +\lambda c_2, t_2 -\lambda c_1) - b_i(\lambda c_2, -\lambda c_1) - b_i(t_1,t_2))y^i  = 0
\end{align*}
so that 
\begin{align}
b_i(t_1 +\lambda c_2, t_2 -\lambda c_1) - b_i(\lambda c_2, -\lambda c_1) - b_i(t_1,t_2)  = 0 \label{b-i-eqn-2}
\end{align}
for each $i = 0, \ldots, m$. Let $\sigma: \cc^2 \to \cc^2$ be the map defined by 
\begin{align*}
(t_1, t_2) \mapsto (t_1, t_2) 
	\begin{pmatrix}
	\bar c_2 & c_1  \\
	-\bar c_1 & c_2
	\end{pmatrix}
\end{align*}  
where $\bar c_i$ denotes the conjugate ot $c_i$, $i = 1, 2$. Let $\tilde b_i := b_i \circ \sigma^{-1}$. Identity \eqref{b-i-eqn-2} implies that 
\begin{align}
\tilde b_i(s_1 +\lambda |c|^2, s_2) - \tilde b_i(\lambda |c|^2, 0) - \tilde b_i(s_1,s_2)  = 0,\quad i = 0, \ldots, m \label{b-i-eqn-3}
\end{align}
where we wrote $(s_1, s_2)$ for $\sigma(t_1, t_2)$ and $|c|^2$ for $|c_1|^2 + |c_2|^2$. It follows from \eqref{b-i-eqn-3} in a straightforward manner that for each $i = 0,\ldots, m$, 
\begin{align}
\tilde b_i(s_1,s_2) = g_i(s_2) + \mu_is_1
\end{align} 
for some $\mu_i \in \cc$ and $g_i \in \cc[s_2]$ such that $g_i(0) = 0$.  Plugging $b_i(t_1,t_2) = \tilde b_i \circ \sigma(t_1,t_2) = g_i(c_1t_1 + c_2t_2) + \mu_i(\bar c_2t_1 - \bar c_1 t_2)$ into \eqref{b-i-eqn'} gives 
\begin{align}
\sum_{i=0}^m (g_i(r+r') - g_i(r') + \mu_i s)x^i &= \sum_{i=0}^m ( g_i(r) + \mu_i s)(x+ r')^i \label{g-i-r-r'}
\end{align}
where $r := c_1t_1 + c_2t_2$, $r' :=c_1t_1 + c_2t_2$, and $s := \bar c_2t_1 - \bar c_1 t_2$, $i = 0, \ldots, s$. Substituting $r = 0$ in \eqref{g-i-r-r'} and using $g_i(0) = 0$ yields that
\begin{align}
\sum_{i=0}^m  \mu_i x^i &= \sum_{i=0}^m \mu_i (x+ r')^i \label{g-i-r-r'-1}
\end{align} 
which implies $\mu_i = 0$ for $i = 1, \ldots, m$. On the other hand, differentiating \eqref{g-i-r-r'} with respect to $r$
and then substituting $r =0$ yields
\begin{align}
\sum_{i=0}^m g'_i(r')x^i = \sum_{i=0}^m g'_i(0)(x+ r')^i \label{g'-eqn-0}
\end{align}
A comparison of coefficients of $x^i$ from both sides of \eqref{g'-eqn-0} gives
\begin{align}
g'_i(r') = 	g'_i(0) + g'_{i+1}(0)\binom{i+1}{1}r' + \cdots + g'_m(0)\binom{m}{m-i}r'^{m-i}, \quad i = 0, \ldots, m. \label{g'-i}
\end{align}
Since $g_i(0) = 0$ for each $i$, this implies that $g_i$'s are as in \eqref{g-i-1} with $\lambda_i := g'_i(0)$, $i = 0, \ldots, m$, and completes the proof of $(\im)$ direction. \\

Now we prove the $(\Leftarrow)$ implication. It suffices to show that if $(c_1, c_2) \neq (0,0)$, then identity \eqref{b-i-eqn'} holds with $b_i$'s defined in \eqref{b-i-1}. But then with $r, r'$ defined as in \eqref{g-i-r-r'}, identity \eqref{b-i-eqn'} is equivalent to the following identity: 
\begin{align*}
\sum_{i=0}^m (g_i(r+r') - g_i(r'))x^i 
	&=   \sum_{i=0}^m g_i(r) (x+ r')^i, 
\end{align*}
which is in turn equivalent to identities below:
\begin{align}
g_i(r+r') - g_i(r')
	&=   \sum_{j=i}^m  \binom{j}{j-i}(r')^{j-i}g_j(r),\ i = 0, \ldots, m. \label{g-i-diff}
\end{align}
Now \eqref{g-i-1} implies that for each $i = 0, \ldots, m$, 
\begin{align*}
g_i(r+r') - g_i(r')
	&=   \sum_{j=i}^m \frac{\lambda_j}{j-i+1}\binom{j}{j-i}((r+r')^{j-i+1} - r'^{j-i+1})\\
	&= \sum_{j=i}^m \frac{\lambda_j}{j-i+1}\binom{j}{j-i}\sum_{k=0}^{j-i} \binom{j-i+1}{k}r^{j-i+1-k}r'^k\\
	&= \sum_{k=0}^{m-i} r'^k\sum_{j=k+i}^m \frac{\lambda_j}{j-i+1}\binom{j}{j-i} \binom{j-i+1}{k}r^{j-i+1-k} \\
	&= \sum_{k=0}^{m-i} r'^k\sum_{j=k+i}^m \lambda_j\frac{j!}{i!k!(j-k-i+1)!}r^{j-k-i+1} \\
	&= \sum_{k=0}^{m-i}\frac{(k+i)!}{i!k!} r'^k\sum_{j=k+i}^m \frac{\lambda_j}{j-k-i+1}\frac{j!}{(k+i)!(j-k-i)!}r^{j-k-i+1} \\
	&= \sum_{k=0}^{m-i}\binom{k+i}{k} r'^k\sum_{j=k+i}^m \frac{\lambda_j}{j-k-i+1}\binom{j}{j-k-i}r^{j-k-i+1} \\
	&= \sum_{k=0}^{m-i}\binom{k+i}{k} r'^k g_{k+i}(r) \\
	&=   \sum_{j=i}^m  \binom{j}{j-i}(r')^{j-i}g_j(r),
\end{align*}
as required. 
\qed

\section{Automorphisms of minimal desingularizations of primitive compactifications of $\cc^2$}
In this section we show that every automorphism of a primitive compactification $\bar X$ of $X :=\cc^2$ lifts to an automorphism of the minimal desingularization $\bar X^{\min}$ of $\bar X$. Conversely, we also show that every automorphism of $\bar X^{\min}$ which fixes $X$ descends to an automorphism of $\bar X$.\\

Let $(Y,P)$ be a germ of a non-singular analytic surface. Choose analytical coordinates $(u,v)$ on $Y$ such that $P = \{u = v = 0\}$. Let $\tilde p,\tilde q$ be relatively prime positive integers such that $\tilde p > \tilde q \geq 1$, and let $\pi': Y' \to Y$ be the minimal resolution of the singularity of the curve $C :=  \{u(v^{\tilde p} - u^{\tilde q}) = 0\}$ at $P$ i.e.\
\begin{prooflist}
\item \label{resolution-1} $\pi'$ is an isomorphism outside the inverse image of $P$;
\item  \label{resolution-2} the strict transform of $u(v^{\tilde p} - u^{\tilde q}) = 0$ on $Y'$ intersects the union of the exceptional curves of $\pi'$ transversally;
\item  \label{resolution-3} every $\pi'': Y'' \to Y$ satisfying the above two properties factors through $\pi'$. 
\end{prooflist}
The morphism $\pi'$ can be expressed as a sequence of blow ups. Let $E_j$, $j = 1, 2, \ldots,$ be the strict transform of the $j$'th blow up on $Y'$. Denote by $E_0$ the strict transform of $u = 0$ on $Y'$. Given a germ $C$ of a curve at $P$, we say that $C$ is an {\em $E_j$-curvette} if the strict transform of $C$ on $Y'$ intersects $E_j$ transversally. The following lemma follows from standard theory of resolution of curve singularities. 

\begin{lemma} \label{c-fraction-lemma}
Express $\tilde p/\tilde q$ as a continued fraction in the following way:  
\begin{gather} \label{c-fractions}
\frac{\tilde p}{\tilde q} = m_1 + \cfrac{1}{m_2 +\cfrac{1}{\ddots + \cfrac{1}{m_N}}} 
\end{gather}
where $m_j \geq 2$, $j = 1, \ldots, N$. Then 
\begin{enumerate}
\item The dual graph of $E_0 \cup E_1 \cup \cdots$ is as in \cref{monomial-resolution}. 

\begin{figure}[htp]

\newcommand{\hchain}[4]{

	\begin{scope}[shift={#1}]
		
		\draw (0 ,0) -- (\edge,0);
		\draw[dashed] ( \edge,0) -- (\edge + \dashededge,0);
		
		\fill[black] (0, 0) circle (\vr);
		\fill[black] (\edge, 0) circle (\vr);
		\fill[black] (\edge + \dashededge, 0) circle (\vr);
		
		\draw (0,0 )  node [above] {#2};
		\draw (\edge,0 )  node [above] {#3};
		\draw  (\edge + \dashededge, 0)   node  [above] {#4};
 	\end{scope}
 	}
 	
 	\newcommand{\hchaina}[4]{
 	
 		\begin{scope}[shift={#1}]
 			
 			\draw (0 ,0) -- (\edge,0);
 			\draw[dashed] ( \edge,0) -- (\edge + \dashededge,0);
 			
 			\fill[black] (0, 0) circle (\vr);
 			\fill[black] (\edge, 0) circle (\vr);
 			\fill[black] (\edge + \dashededge, 0) circle (\vr);
 			
 			\draw (0,0 )  node [above] {#2};
 			\draw (\edge,0 )  node [below] {#3};
 			\draw  (\edge + \dashededge, 0)   node  [above] {#4};
 	 	\end{scope}
 	 		
 	}
 	
 	\newcommand{\hchainb}[4]{
 	 	
 	 		\begin{scope}[shift={#1}]
 	 			
 	 			\draw (0 ,0) -- (\edge,0);
 	 			\draw[dashed] ( \edge,0) -- (\edge + \dashededge,0);
 	 			
 	 			\fill[black] (0, 0) circle (\vr);
 	 			\fill[black] (\edge, 0) circle (\vr);
 	 			\fill[black] (\edge + \dashededge, 0) circle (\vr);
 	 			
 	 			\draw (0,0 )  node [below] {#2};
 	 			\draw (\edge,0 )  node [above] {#3};
 	 			\draw  (\edge + \dashededge, 0)   node  [below] {#4};
 	 	 	\end{scope}
 	 	 		
 	 	}

\newcommand{\vchain}[4]{

	\begin{scope}[shift={#1}]
		
		\draw (0 ,0) -- (0,\vedge);
		\draw[dashed] (0, \vedge) -- (0, \vedge + \dashedvedge);
		
		\fill[black] (0, 0) circle (\vr);
		\fill[black] (0,\vedge) circle (\vr);
		\fill[black] (0, \vedge + \dashedvedge) circle (\vr);
		
		\draw (0,0 )  node [right] {#2};
		\draw (0,\vedge)  node [right] {#3};
		\draw (0, \vedge + \dashedvedge)  node  [right] {#4};
 	\end{scope}
 		
}

\begin{center}
\begin{tikzpicture}
 	\pgfmathsetmacro\edge{1.5}
  	\pgfmathsetmacro\dashededge{1.75}	
 	\pgfmathsetmacro\vedge{0.75}
 	\pgfmathsetmacro\dashedvedge{1}
	\pgfmathsetmacro\vr{0.1}
 	
 	\draw (0,0) -- (\edge,0);
 	\fill[black] (0, 0) circle (\vr);
 	\draw (0,0)  node [below] {$E_0$};
 	
 	\hchaina{(\edge,0)}{$E_{m_1+1}$}{$E_{m_1+2}$}{$E_{m_1+m_2}$}; 
	\draw (2*\edge + \dashededge,0) -- (3*\edge + \dashededge, 0);
	\hchainb{(3*\edge + \dashededge, 0)}{$E_{m_1+m_2+m_3}$}{$E_{m_1+m_2+m_3+1}$}{$E_{m_1+\cdots + m_4}$}; 
	\draw [dashed] (4*\edge +2* \dashededge,0) -- (4*\edge + 3*\dashededge, 0);	
 	\fill[black] (4*\edge + 3*\dashededge, 0) circle (\vr);
 	\draw (4*\edge + 3*\dashededge, 0)  node [above] {$E_{m_1 + \cdots + m_N}$}; 
 	
	\draw [dashed]  (4*\edge + 3*\dashededge, 0) -- (4*\edge +3* \dashededge,-\dashedvedge);
	\vchain{(4*\edge + 3*\dashededge, -2*\dashedvedge - \vedge )} {$E_{m_1+m_2+1}$}{$E_{m_1+m_2+2}$} {$E_{m_1+m_2 +m_3}$}; 
	\draw  (4*\edge + 3*\dashededge, -2*\dashedvedge - \vedge ) -- (4*\edge +3* \dashededge,  -2*\dashedvedge - 2*\vedge );
	\vchain{(4*\edge + 3*\dashededge, -3*\dashedvedge - 3*\vedge )} {$E_{1}$}{$E_{2}$} {$E_{m_1}$}; 			
\end{tikzpicture}
\caption{Dual graph for the minimal resolution of singularities of monomial curve singularities}\label{monomial-resolution}
\end{center}
\end{figure}
\item \label{E_0} The self intersection number of $E_0$ is $1- \lceil \tilde p/ \tilde q \rceil$. 
\item \label{curvette-assertion} Set $M_j  := \sum_{i=1}^j m_j$, $j = 0, \ldots, N$. For each $j = 0, \ldots, N-1$ and each $k = 1, \ldots, m_{j+1}$, let $\tilde p_{M_j + k}, \tilde q_{M_j + k}$ be the positive relatively prime integers such that 
\begin{gather} \label{c'-fractions}
\frac{\tilde p_{M_j+k}}{\tilde q_{M_j+k}} = m_1 + \cfrac{1}{m_2 +\cfrac{1}{\ddots + \cfrac{1}{m_j + \cfrac{1}{k}}}} 
\end{gather}
Then for generic $\xi' \in \cc$, the germ of $v^{\tilde p_{M_j+k}} - \xi' u^{\tilde q_{M_j+k}} = 0$ is an $E_{M_j+k}$-curvette. \qed
\end{enumerate}
\end{lemma}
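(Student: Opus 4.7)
I would proceed by induction on $N$, the length of the continued fraction expansion, using the well-known correspondence between the blow-ups appearing in $\pi'$ and the steps of the Euclidean algorithm applied to $(\tilde p, \tilde q)$.

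First I would carry out the first ``block'' of $m_1$ blow-ups. After blowing up $P$ and passing to the chart $(s,t)$ defined by $(u, v) = (st, t)$, the exceptional divisor is $E_1 = \{t = 0\}$ and the strict transform of the branch $v^{\tilde p} = u^{\tilde q}$ becomes $t^{\tilde p - \tilde q} = s^{\tilde q}$; the strict transform of $E_0$ is $\{s = 0\}$, so $E_0$, $E_1$, and the strict transform of the branch all pass through the new origin. Iterating this blow-up $m_1$ times (always centered at the triple point, in the chart in which the branch persists), the branch becomes $t^{r_1} = s^{\tilde q}$ with $r_1 := \tilde p - m_1 \tilde q \in [0, \tilde q)$, and a horizontal chain $E_1 - E_2 - \cdots - E_{m_1}$ is created with $E_0$ attached to $E_{m_1}$.

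For the inductive step, if $r_1 = 0$ we are in the base case $N = 1$ and the branch already meets $E_{m_1}$ transversally. Otherwise the two coordinates swap roles: the branch is now $t^{r_1} = s^{\tilde q}$ with $\tilde q > r_1$ and continued fraction $\tilde q/r_1 = m_2 + 1/(m_3 + \cdots + 1/m_N)$ of length $N - 1$; apply the inductive hypothesis with $E_{m_1}$ playing the role of $E_0$. The coordinate swap is precisely what causes the zig-zag dual graph of \cref{monomial-resolution} to appear: the new chain of exceptional divisors is attached perpendicularly to the old one. Assertion (ii) then follows by tracking the self-intersection of $E_0$ across the blow-up process: $E_0$ is affected (and its self-intersection reduced by $1$) exactly by those blow-ups centered on a point of $E_0$, which---as one verifies---are precisely the first $\lceil \tilde p / \tilde q \rceil - 1$ blow-ups, after which $E_0$ becomes invisible in the chart hosting the subsequent blow-ups.

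For assertion (iii), I would apply the same induction to the curvette candidate $v^{\tilde p_{M_j+k}} = \xi' u^{\tilde q_{M_j+k}}$ in place of the original branch: since $\tilde p_{M_j+k}/\tilde q_{M_j+k}$ is the convergent of the continued fraction truncated at position $M_j + k$, this curve is transported through the same first $M_j+k$ blow-ups and charts as the original branch, and then terminates at $E_{M_j+k}$ as a smooth branch transverse to it for generic $\xi'$ (the non-generic $\xi'$ being those for which the curve coincides with a previously constructed component). The main technical obstacle is the coordinate bookkeeping at the transition between consecutive blocks of blow-ups, where one must verify that the swap $s \leftrightarrow t$ does produce the perpendicular attachment drawn in \cref{monomial-resolution}; once this is cleanly pinned down the rest is straightforward arithmetic with the convergents of the continued fraction.
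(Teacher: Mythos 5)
The paper itself gives no proof of \cref{c-fraction-lemma}: it is stated with a closing box and the surrounding text simply asserts that it follows from standard theory of resolution of curve singularities. Your induction on the length of the continued fraction, tracking the Euclidean algorithm through the blow-up charts, is exactly that standard argument, and your first-block computation (the chart $(u,v)=(st,t)$, the strict transform $t^{\tilde p-\tilde q}=s^{\tilde q}$, the chain $E_1,\dots,E_{m_1}$ with $E_0$ attached to $E_{m_1}$ after $m_1$ blow-ups) is correct. However, two points must be repaired. First, the inductive hypothesis as you state it does not literally apply to the residual branch $t^{r_1}=s^{\tilde q}$: unlike the initial configuration, this branch passes through the \emph{node} $E_0\cap E_{m_1}$ of two transverse divisors rather than through a smooth point of a single divisor $E_0$, so the statement proved by induction must be strengthened to cover a branch through a normal crossing of two axes (or one must argue separately that the first blow-up of each block detaches the third divisor and record where it lands --- this is exactly what attaches $E_0$ to $E_{m_1+1}$ rather than to $E_{m_1}$ in \cref{monomial-resolution} and produces the zig-zag, i.e.\ the actual content of assertion (1)). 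You flag this as bookkeeping, but it is the decisive step, not a routine verification.

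Second, your argument for assertion (2) contains an off-by-one error. The blow-ups centered at points of $E_0$ are: the one at $P$ itself, the $m_1-1$ blow-ups at $E_0\cap E_k$ for $k=1,\dots,m_1-1$, and \emph{one more} at $E_0\cap E_{m_1}$ --- after $m_1$ blow-ups the branch still passes through $E_0\cap E_{m_1}$ (it is now tangent to $E_{m_1}$ there), and in any case that triple point must be blown up before the total transform is simple normal crossing; only after that blow-up does $E_0$ disappear from the relevant charts. That is $m_1+1=\lceil\tilde p/\tilde q\rceil$ blow-ups on $E_0$ when $\tilde q>1$ (and $\tilde p=\lceil\tilde p/\tilde q\rceil$ when $\tilde q=1$), not $\lceil\tilde p/\tilde q\rceil-1$. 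Starting from self-intersection $+1$ (the line at infinity in $\pp^2$, which is how the lemma is applied), the correct count gives the stated value $1-\lceil\tilde p/\tilde q\rceil$, whereas yours gives $2-\lceil\tilde p/\tilde q\rceil$; compare the case $(\tilde p,\tilde q)=(3,1)$, where \cref{fig:simple} shows the line $L$ with weight $-2$ after three blow-ups on it. Your treatment of assertion (3) via the intermediate fractions of the continued fraction is the right mechanism and poses no further difficulty once the induction is set up properly.
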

%

\begin{claim} \label{fractional-claim}
Adopt the notation of \cref{c-fraction-lemma}. Fix $j$, $0 \leq j \leq  N-1$.
\begin{enumerate}
\item \label{even} Assume $j$ is even. Then 
\begin{align}
(\tilde p_{M_j+k} - \tilde q_{M_j+k})/\tilde p_{M_j+k} 
	&< (\tilde p - \tilde q)/\tilde p \label{even-1} \\
 \lfloor (\tilde p_{M_j+k} - \tilde q_{M_j+k})/\tilde p_{M_j+k} \rfloor 
	 &= \lfloor (\tilde p - \tilde q)/\tilde p \rfloor \label{even-2}
	 = 0
\end{align}

\item  \label{odd} Assume $j$ is odd. Then 
\begin{align}
(\tilde p_{M_j+k} - \tilde q_{M_j+k})/\tilde p_{M_j+k} 
	&> (\tilde p - \tilde q)/\tilde p \label{odd-1}
\end{align}
Let $\Gamma$ be the weighted chain (where the weight of a vertex is the self intersection number of the corresponding curve) connecting $E_0$ to $E_{M_j+k}$. If $\Gamma$ is not as in \cref{fig:irrelevant}, then 
\begin{align}
 \lfloor \tilde p_{M_j+k} / (\tilde p_{M_j+k} - \tilde q_{M_j+k})\rfloor 
	 &\geq\lfloor \tilde p / (\tilde p - \tilde q) \rfloor\label{odd-2}
\end{align}
\begin{figure}[htp]
\newcommand{\hchain}[7]{

	\begin{scope}[shift={#1}]
		
		\draw (0 ,0) -- (\edge,0);
		\draw[dashed] ( \edge,0) -- (\edge + \dashededge,0);
		
		\fill[black] (0, 0) circle (\vr);
		\fill[black] (\edge, 0) circle (\vr);
		\fill[black] (\edge + \dashededge, 0) circle (\vr);
		
		\draw (0,0 )  node [below] {#2};
		\draw (\edge,0 )  node [below] {#4};
		\draw  (\edge + \dashededge, 0)   node  [below] {#6};
		
		\draw (0,0 )  node [above] {#3};
		\draw (\edge,0 )  node [above] {#5};
		\draw  (\edge + \dashededge, 0)   node  [above] {#7};
 	\end{scope}
 	}
\begin{center}
\begin{tikzpicture}
 	\pgfmathsetmacro\edge{1.5}
  	\pgfmathsetmacro\dashededge{1.75}	
 	\pgfmathsetmacro\vedge{0.75}
 	\pgfmathsetmacro\dashedvedge{1}
	\pgfmathsetmacro\vr{0.1}
 	
 	\hchain{(0,0)}{$E_0$}{$-1$}{$E_{m_1+1}$}{$-2$}{$E_{M_j+k}$}{$-2$}
 	 	
\end{tikzpicture}
\caption{`Irrelevant' weighted chain}\label{fig:irrelevant}
\end{center}
\end{figure}
\end{enumerate}
\end{claim}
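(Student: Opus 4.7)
The claim compares the continued fraction $[m_1;m_2,\ldots,m_N]$ (which equals $\tilde p/\tilde q$) with its ``semi-convergent'' $[m_1;m_2,\ldots,m_j,k]$ (which equals $\tilde p_{M_j+k}/\tilde q_{M_j+k}$), so my plan is to derive everything from elementary monotonicity of continued fractions together with an inspection of the relevant bounds on the truncated convergents.

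First I will establish \eqref{even-1} and \eqref{odd-1}. Each map $f_i(x)=m_i+1/x$ is strictly decreasing on $(0,\infty)$, so the composition $f_1\circ\cdots\circ f_j$ is strictly increasing when $j$ is even and strictly decreasing when $j$ is odd. Writing $\alpha_{j+1}:=m_{j+1}+1/(m_{j+2}+\cdots+1/m_N)$, the hypothesis $1\leq k\leq m_{j+1}$ gives $k\leq\alpha_{j+1}$, with strict inequality unless $(j,k)=(N-1,m_N)$, in which trivial boundary case the truncation coincides with $\tilde p/\tilde q$. Applying the composition therefore yields $\tilde p_{M_j+k}/\tilde q_{M_j+k}<\tilde p/\tilde q$ for even $j$ and $\tilde p_{M_j+k}/\tilde q_{M_j+k}>\tilde p/\tilde q$ for odd $j$. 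Passing to $(\tilde p-\tilde q)/\tilde p=1-\tilde q/\tilde p$, which is strictly increasing in $\tilde p/\tilde q$, delivers the displayed forms of \eqref{even-1} and \eqref{odd-1}. For \eqref{even-2}, both $(\tilde p-\tilde q)/\tilde p$ and its truncated analogue lie strictly between $0$ and $1$, so both floors equal $0$. For \eqref{odd-2}, the hypothesis $m_1\geq 2$ forces $\tilde p/\tilde q\geq 2$, hence $\tilde p/(\tilde p-\tilde q)\leq 2$, with equality only when $\tilde p=2\tilde q$, i.e.\ $(\tilde p,\tilde q)=(2,1)$. That extreme forces $N=1$, while an odd $j\geq 1$ requires $N\geq 2$; consequently $\tilde p/(\tilde p-\tilde q)\in(1,2)$ has floor $1$. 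The identical argument applied to $[m_1;m_2,\ldots,m_j,k]$, which has at least two CF terms and so strictly exceeds $m_1\geq 2$, yields $\tilde p_{M_j+k}/(\tilde p_{M_j+k}-\tilde q_{M_j+k})\in(1,2)$ with floor $1$, giving equality in \eqref{odd-2}.

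The main obstacle is justifying that the ``irrelevant'' exclusion in \cref{fig:irrelevant} is in fact vacuous under the standing assumption $m_i\geq 2$. The configuration in \cref{fig:irrelevant} requires $E_0^2=-1$, and the formula $E_0^2=1-\lceil\tilde p/\tilde q\rceil$ supplied by \cref{c-fraction-lemma} then forces $\lceil\tilde p/\tilde q\rceil=2$. Combined with $\tilde p/\tilde q\geq m_1\geq 2$ and $\gcd(\tilde p,\tilde q)=1$, this gives $(\tilde p,\tilde q)=(2,1)$ and so $N=1$, contradicting the requirement $j\geq 1$ of the odd case. Thus \eqref{odd-2} holds unconditionally in the stated setting, and the exclusion is a technical proviso that never triggers.
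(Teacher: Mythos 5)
Your continued-fraction monotonicity argument for \eqref{even-1}, \eqref{odd-1} and \eqref{even-2} is fine (it is essentially what the paper treats as immediate from assertion (3) of \cref{c-fraction-lemma}), and your first case for \eqref{odd-2} --- $\tilde p/\tilde q>2$, so the right-hand floor is $1$ and the inequality is automatic --- coincides with the paper's. The gap is your claim that the \cref{fig:irrelevant} exclusion is vacuous; that exclusion is where all the content of part \eqref{odd} actually sits.

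Your argument hinges on $m_1\geq 2$, hence $\tilde p/\tilde q\geq 2$. Although \cref{c-fraction-lemma} does print ``$m_j\geq 2$'', that is a slip in the paper: its own proof of this claim devotes its main case to $\tilde p/\tilde q<2$, i.e.\ $m_1=1$, and that case genuinely arises downstream --- in \cref{m-lemma} one takes $(\tilde p,\tilde q)=(p_1,p_1-q'_1)$, so e.g.\ $(p_1,q'_1)=(5,2)$ gives $\tilde p/\tilde q=5/3$; likewise observation \ref{minimal-observation} (the strict transform of the line at infinity is contracted exactly when $q'_1\leq p_1/2$) is only meaningful because $\lceil\tilde p/\tilde q\rceil=2$ occurs with $N\geq 2$. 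When $m_1=1$ one computes $\lfloor\tilde p/(\tilde p-\tilde q)\rfloor=1+m_2\geq 3$, not $1$, whereas for $j=1$ and $1\leq k\leq m_2$ the truncation gives $(\tilde p_{1+k},\tilde q_{1+k})=(k+1,k)$, so the left side of \eqref{odd-2} equals $k+1$. The inequality therefore \emph{fails} for every $k<m_2$, and those $E_{1+k}$ are precisely the vertices whose chain to $E_0$ is as in \cref{fig:irrelevant} (here $E_0^2=1-\lceil\tilde p/\tilde q\rceil=-1$ and $E_2,\dots,E_{m_2}$ have self-intersection $-2$). So the exclusion is not a dead proviso but the exact list of counterexamples to \eqref{odd-2}, and the paper's proof consists of identifying it. That your reading rendered the hypothesis in the figure vacuous was the warning sign: as written, your proof covers only the regime $\tilde p/\tilde q\geq 2$ and misses the case the claim was formulated to handle.
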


\begin{proof}
Inequalities \eqref{even-1} and \eqref{odd-1} follows immediately from assertion \eqref{curvette-assertion} of \cref{c-fraction-lemma}. Since $\tilde q < \tilde p$, inequality \eqref{even-2} follows from \eqref{even-1}. We now prove \eqref{odd-2}. If $\tilde p/\tilde q > 2$ then
\begin{align*}
\lfloor \tilde p_{M_j+k}  /(\tilde p_{M_j+k} - \tilde q_{M_j+k}) \rfloor 
	\geq 1 
	 = \lfloor \tilde p / (\tilde p - \tilde q) \rfloor
\end{align*}
If $\tilde p /\tilde q = 2$, then the dual graph from \cref{monomial-resolution} is as follows, where we also list the weights (i.e.\ self intersection number of the corresponding curves):
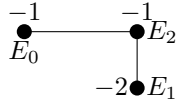
\begin{figure}[htp]
\begin{center}
\begin{tikzpicture}
 	\pgfmathsetmacro\edge{1.5}
  	\pgfmathsetmacro\dashededge{1.75}	
 	\pgfmathsetmacro\vedge{0.75}
 	\pgfmathsetmacro\dashedvedge{1}
	\pgfmathsetmacro\vr{0.1}
 	
 	\draw (0,0) -- (\edge,0);
 	\fill[black] (0, 0) circle (\vr);
 	\draw (0,0)  node [below] {$E_0$};
 	\draw (0,0)  node [above] {$-1$};
 	
 	\fill[black] (\edge, 0) circle (\vr);
 	\draw (\edge,0)  node [right] {$E_2$};
 	\draw (\edge,0)  node [above] {$-1$};
 	
 	\draw (\edge,0) -- (\edge,-\vedge);
	\fill[black] (\edge, -\vedge) circle (\vr);
 	\draw (\edge,-\vedge)  node [right] {$E_1$}; 	
 	\draw (\edge,-\vedge)  node [left] {$-2$};
 	 	
\end{tikzpicture}
\caption{Case $\tilde p/\tilde q = 2$}\label{fig:odd-2-1}
\end{center}
\end{figure}
Therefore \eqref{odd-2} is vacuously true. Now assume $\tilde p/\tilde q < 2$. Then $m_1 = 1$ and $N \geq 2$ in identity \eqref{c-fractions}. In particular, identities \eqref{c-fractions} and \eqref{c'-fractions} imply that 
\begin{align}
\lfloor \tilde p_{1+m_2}  /(\tilde p_{1+m_2} - \tilde q_{1+m_2}) \rfloor  
= \lfloor \tilde p/(\tilde p - \tilde q) \rfloor = 1 + m_2\label{<2}
\end{align}
It is straightforward to see that the weighted chain consisting of $E_0, E_2, E_3, \ldots, E_{m_2}$ is as in \cref{fig:irrelevant}, which proves \eqref{odd-2}. 
\end{proof}

Now we apply the preceding observations to minimal resolution of a primitive compactification $\bar X$ of $\cc^2$. Pick the (unique) primitive key sequence $\vec \omega = (\omega_0, \ldots, \omega_{n+1})$ in normal form and $\vec \theta \in \ntorus$ such that $\bar X \cong \xomegatheta$. As in \cref{primitive-dual-section} let $\tilde \phi(x,\xi) = \sum_{j=1}^s a_j x^{\beta_j} + \xi x^{\beta_{s+1}}$ be the formal descending Puiseux series associated to $\bar X$. Let $\beta_1 = \beta_{j_1} >  \cdots > \beta_{j_{l+1}} = \beta_{s+1}$ be the formal characteristic exponents, and $(q'_1, p_1), \ldots, (q'_{l+1}, p_{l+1})$ be the formal Newton pairs of $\tilde \phi$. Embed $X := \cc^2$ into $\pp^2$ via $(x,y) \mapsto [x:y:1]$. Then $(u,v) := (1/x,y/x)$ are analytic coordinates near $P := [1:0:0] \in \pp^2$; note that $u = 0$ is the equation of the line at infinity on $\pp^2$. Pick a generic $\xi' \in \cc$. Then
\begin{align*}
\tilde \psi(u,\xi') := u \tilde \phi(1/u, \xi)|_{\xi = \xi'} = \sum_{j=1}^s a_j u^{1- \beta_j} + \xi u^{1-\beta_{s+1}}
\end{align*}
is a (finite) Puiseux series in $u$. Let $C$ be the germ at $P$ of the (reduced) union of the line at infinity and the irreducible analytic curve with Puiseux expansion $v = \tilde \psi(u, \xi')$. It turns out (see e.g.\ \cite[Proposition 4.2]{sub2-1} that 
\begin{prooflist}[resume]
\item If  $\pi':\bar X' \to \pp^2$ is the minimal resolution (in the sense of properties \ref{resolution-1}-\ref{resolution-3}) of the singularity at $P$ of $C$, then $\bar X'$ is also a resolution of singularities of $\bar X$. 
\item The dual graph of the resolution $\sigma': \bar X' \to \bar X$ is of the form described in \cref{primitive-resolution-thm}. More precisely, in \cref{primitive-resolution}
\begin{prooflist}
\item the strict transform $E_0$ on $\bar X'$ of the line at infinity on $\pp^2$ is the `left end' of the leftmost chain (with $\Delta$-value $|q'_1| = q'_1$). 
\item \label{corner-observation} If $E$ corresponds to the vertex which is adjacent to both the chain with $\Delta$-value $|q'_i|$ and the chain with $\Delta$-value $p'_i$, $1 \leq i \leq l$, then for a generic $\xi' \in \cc$, the germ at $P$ of the curve with Puiseux expansion 
\begin{align}
v := \sum_{j < j_i} a_j u^{1- \beta_j} + \xi' u^{1-\beta_{j_i}} \label{corner-E}
\end{align}
is an $E$-curvette. 
\item \label{interior-observation} If $E$ corresponds to a vertex on the chain with $\Delta$-value $|q'_i|$ or $p'_i$, $1 \leq i \leq l$, then one of the following holds:
\begin{prooflist}
\item \label{interior-observation-1} there exists $j_*$, $j_{i-1} < j_* < j_i$, such that the germ at $P$ of the curve with Puiseux expansion 
\begin{align}
v := \sum_{j < j_*} a_j u^{1- \beta_j} + \xi' u^{1-\beta_{j_*}} \label{interior-E-1}
\end{align}
is an $E$-curvette for generic $\xi' \in \cc$; 
\item \label{interior-observation-2} or there exist $j_*$, $j_{i-1} < j_* \leq j_i$, and relatively prime positive integers $\tilde p_k, \tilde q_k$ which appear as exponents of curvettes from assertion \eqref{curvette-assertion} of \cref{c-fraction-lemma} with 
\begin{align}
(\tilde p, \tilde q) := 
	\begin{cases}
	(p_1, p_1 - q'_1) & \text{if}\ i = 1,\\
	(p_i, |q'_i|)	&\text{otherwise.}
	\end{cases}
\end{align}
such that the germ at $P$ of the curve with Puiseux expansion 
\begin{align}
v := \sum_{j < j_*} a_j u^{1- \beta_j} + \xi' u^{1-\beta_{j_{i-1}} + \tilde q_k/(p_1 \cdots p_{i-1}\tilde p_k)} \label{interior-E-2}
\end{align}
is an $E$-curvette for generic $\xi' \in \cc$.
\end{prooflist}
\end{prooflist}
\item \label{minimal-observation} The minimal resolution $\bar X^{\min}$ of singularities of $\bar X$ is formed by contracting some of the exceptional curves of $\pi'$, and possibly also the strict transform of the line at infinity. The latter gets contracted if and only if $q'_1 \leq p_1/2$, where $(q'_1, p_1), \ldots, (q'_{l+1}, p_{l+1})$ are formal Newton pairs of $\tilde \phi$. 
\end{prooflist}
Let $E$ be an exceptional curve of $\pi'$. Then $E$ defines a {\em divisorial valuation centered at infinity} on $\cc[x,y]$, and has an associated formal descending Puiseux series $\tilde \phi_E(x, \xi)$. Moreover, 
\begin{prooflist}[resume]
\item \label{descending-puiseux-observation} for each $\xi' \in \cc$, $u\tilde \phi_E(1/u,\xi)|_{\xi = \xi'}$ is precisely the Puiseux series from the right hand side of \eqref{corner-E}, \eqref{interior-E-1} or \eqref{interior-E-2} depending on the position of $E$. 
\end{prooflist}
Let $m_E$ be the integer associated to (the key sequence associated to) $E$ defined as in \eqref{m}, with $\vec \omega$ replaced by the key sequence associated to $E$. Observations \ref{corner-observation}, \ref{interior-observation} and \cite[Observation (i) from the proof of Theorem  5.2]{sub2-2} imply that 
\begin{align}
\momega
	&= \left\lfloor \frac{\ord_x(\tilde \phi)+1}{\deg_x(\tilde \phi)} - 1 \right\rfloor 
	= \left\lfloor\cfrac{p_1}{q'_1}\left (\beta_{s+1} + 1\right)  - 1 \right\rfloor  \label{momega} \\
m_E 
	&= \left\lfloor \frac{\ord_x(\tilde \phi_E)+1}{\deg_x(\tilde \phi_E)} - 1 \right\rfloor \label{m_E} \\
	&=
	\begin{cases}
	\left\lfloor\cfrac{\tilde p_k}{\tilde p_k - \tilde q_k} \right\rfloor  
		& \text{in the scenario of \ref{interior-observation-2} with $i = 1$,}  \\
	\left\lfloor\cfrac{p_1}{q'_1}\left (\beta_{j_{i-1}} - \cfrac{\tilde q_k}{p_1 \cdots p_{i-1}\tilde p_k} + 1\right)  - 1 \right\rfloor  
		& \text{in the scenario of \ref{interior-observation-2} with $i  > 1$,} \\
	\left\lfloor\cfrac{p_1}{q'_1}\left (\beta_{j_*} + 1\right)  - 1 \right\rfloor  
		& \text{in the scenario of \ref{interior-observation-1},} \\
	\left\lfloor\cfrac{p_1}{q'_1}\left (\beta_{j_i} + 1\right)  - 1 \right\rfloor  
		& \text{in the scenario of \ref{corner-observation}.} \\
	\end{cases}
	\notag
\end{align}

\begin{lemma} \label{m-lemma}
Let $E$ be an exceptional curve of the minimal resolution $\sigma: \bar X^{\min} \to \bar X$ of singularities of $\bar X$. Then $m_E \geq m_{\vec \omega}$, where $m_{\vec \omega}$ is as in \eqref{m}.  
\end{lemma}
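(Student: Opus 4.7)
The plan is a case analysis based on the position of $E$ in the weighted dual graph of $\pi'$, using the scenarios \ref{corner-observation} and \ref{interior-observation} together with the formulas for $m_{\vec\omega}$ and $m_E$ in \eqref{momega} and \eqref{m_E}. The central observation is that $p_1/q'_1 = \omega_0/\omega_1 > 0$ by primitivity of $\vec\omega$, so the function $\beta \mapsto \lfloor (p_1/q'_1)(\beta+1) - 1\rfloor$ is non-decreasing. Combined with the identity $\beta_{j_{i-1}} - \beta_{j_i} = |q'_i|/(p_1 \cdots p_i)$ for $i \geq 2$, this reduces the target inequality $m_E \geq m_{\vec\omega}$ to comparing certain rationals.

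In scenario \ref{corner-observation}, the index $i$ satisfies $1 \leq i \leq l$, so $\beta_{j_i} > \beta_{j_{l+1}} = \beta_{s+1}$, and monotonicity immediately gives $m_E \geq m_{\vec\omega}$. In scenario \ref{interior-observation-1}, the exponent $\beta_{j_*}$ with $j_{i-1} < j_* < j_i$ likewise exceeds $\beta_{s+1}$, yielding the same conclusion. In scenario \ref{interior-observation-2} with $i \geq 2$, one must show $\tilde q_k/\tilde p_k \leq |q'_i|/p_i$ so that the Puiseux exponent in \eqref{m_E} stays at least $\beta_{j_i} \geq \beta_{s+1}$. This follows from \cref{fractional-claim} applied with $(\tilde p, \tilde q) := (p_i, |q'_i|)$: matching the two chains in \cref{primitive-resolution} (with $\Delta$-values $|q'_i|$ and $p_i$) to the alternating parities of $j$ in \cref{monomial-resolution}, either the even-case inequality \eqref{even-1} or the odd-case inequality \eqref{odd-2} applies.

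The main obstacle is scenario \ref{interior-observation-2} with $i = 1$, where $m_E = \lfloor \tilde p_k/(\tilde p_k - \tilde q_k)\rfloor$ is of a different shape. Applying \cref{fractional-claim} with $(\tilde p, \tilde q) := (p_1, p_1 - q'_1)$: in the even-$j$ case, \eqref{even-1} gives $\tilde p_k/(\tilde p_k - \tilde q_k) > p_1/q'_1$; in the odd-$j$ case, \eqref{odd-2} gives $\lfloor \tilde p_k/(\tilde p_k - \tilde q_k)\rfloor \geq \lfloor p_1/q'_1\rfloor$, \emph{except} when the weighted chain from $E_0$ to $E$ is as in \cref{fig:irrelevant}. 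That exceptional configuration forces the self-intersection of $L = E_0$ to be $-1$, equivalently $q'_1 \leq p_1/2$, and then by observation \ref{minimal-observation} the curve $L$ is contracted in passing to $\bar X^{\min}$, removing $E$ from the exceptional locus of $\sigma$. The remaining case $E = L$ (exceptional for $\sigma$ only when $q'_1 > p_1/2$) is handled directly: the descending Puiseux series $\tilde\phi_L = \xi x$ gives $m_L = 1$, and $\beta_{s+1} \leq \beta_1 = q'_1/p_1$ implies $m_{\vec\omega} \leq \lfloor p_1/q'_1\rfloor \leq 1$. In all surviving subcases the final comparison $\lfloor p_1/q'_1\rfloor \geq m_{\vec\omega}$ is immediate from monotonicity and $\beta_{s+1} \leq \beta_1$, which completes the proof.
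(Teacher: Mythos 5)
Your case decomposition mirrors the paper's proof (which splits along observations \ref{corner-observation}, \ref{interior-observation-1}, \ref{interior-observation-2} with $i=1$ and $i>1$, and the strict transform of the line at infinity), and your treatment of the corner and \ref{interior-observation-1} cases, of \ref{interior-observation-2} with $i=1$, of the chain of \cref{fig:irrelevant} (where you correctly note that $E_0$ has self-intersection $-1$, forcing $q'_1\le p_1/2$ and hence contraction), and of $E=E_0$ itself all match the paper, in places with more detail than the paper supplies.

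However, there is a genuine gap in your handling of scenario \ref{interior-observation-2} with $i\ge 2$. You reduce the claim to the inequality $\tilde q_k/\tilde p_k\le |q'_i|/p_i$, arguing that the Puiseux exponent $\beta_{j_{i-1}}-\tilde q_k/(p_1\cdots p_{i-1}\tilde p_k)$ then stays at least $\beta_{j_i}$ and monotonicity of $\beta\mapsto\lfloor (p_1/q'_1)(\beta+1)-1\rfloor$ finishes the job. But in the even-$j$ case the inequality you need is \emph{false}: \eqref{even-1} reads $(\tilde p_k-\tilde q_k)/\tilde p_k<(\tilde p-\tilde q)/\tilde p$, i.e.\ $\tilde q_k/\tilde p_k>|q'_i|/p_i$, so the exponent drops strictly below $\beta_{j_i}$ and pure monotonicity gives an upper bound on $m_E$, not a lower bound. (You also cite \eqref{odd-2}, which compares $\lfloor\tilde p_k/(\tilde p_k-\tilde q_k)\rfloor$ with $\lfloor\tilde p/(\tilde p-\tilde q)\rfloor$ and is the right tool only for $i=1$; for the odd case with $i\ge2$ the relevant inequality is \eqref{odd-1}.) The paper's proof closes exactly this gap by a floor/integrality argument rather than monotonicity of the exponent: since $p_1\cdots p_{i-1}\beta_{j_{i-1}}$ is an integer and both $\tilde q_k/\tilde p_k$ and $|q'_i|/p_i$ lie in $(0,1)$ (this is the content of \eqref{even-2} combined with \eqref{odd-1}), one gets
\begin{align*}
\lfloor p_1\cdots p_{i-1}\beta_{j_{i-1}}-\tilde q_k/\tilde p_k\rfloor\ \ge\ \lfloor p_1\cdots p_{i-1}\beta_{j_{i-1}}-|q'_i|/p_i\rfloor=\lfloor p_1\cdots p_{i-1}\beta_{j_i}\rfloor,
\end{align*}
and from this the required inequality between the floors defining $m_E$ and $m_{\vec\omega}$. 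Without this step your proof does not cover the even-$j$ subcase, so the argument as written is incomplete.
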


\begin{proof}
Observation \ref{minimal-observation} implies that either 
\begin{defnlist}
\item \label{case-exceptional} $E$ comes from either an exceptional curve of $\pi': \bar X' \to \pp^2$,
\item \label{case-line} or $E$ is the strict transformation of the line at infinity on $\pp^2$.
\end{defnlist}
At first consider case \ref{case-exceptional}. In the scenario of observations \ref{corner-observation} or \ref{interior-observation-1} Identities \eqref{momega} and \eqref{m_E} immediately imply that $m_E \geq \momega$. Now note that 
\begin{align}
\momega \leq
 \left\lfloor\cfrac{p_1}{q'_1}\left (\beta_1+ 1\right)  - 1 \right\rfloor =  \left\lfloor\cfrac{p_1}{q'_1}\right\rfloor \label{momega-bound-1}
\end{align}
If \ref{interior-observation-2} holds with $i = 1$, then \eqref{even-1}, \eqref{odd-2} and \eqref{momega-bound-1} imply that $m_E \geq \momega$. On the other hand, if \ref{interior-observation-2} holds with $i > 1$, then 
\eqref{even-2} and \eqref{odd-1} imply that 
\begin{align*}
\lfloor (\tilde p_k - \tilde q_k)/ \tilde p_k \rfloor
	&\geq \lfloor (p_i - |q'_i|)/ p_i \rfloor
\end{align*}
Since $p_1 \cdots p_{i-1}\beta_{j_{i-1}}$ is an integer, it follows that 
\begin{align*}
&	\lfloor (p_1 \cdots p_{i-1}\beta_{j_{i-1}}- \tilde q_k/ \tilde p_k \rfloor
		\geq \lfloor p_1 \cdots p_{i-1}\beta_{j_{i-1}} - |q'_i|/ p_i \rfloor
		= \lfloor p_1 \cdots p_{i-1}\beta_{j_i}\rfloor \\
\im		
&	\lfloor p_1 \cdots p_{i-1}(\beta_{j_{i-1}}- \tilde q_k/ (p_1 \cdots p_{i-1}\tilde p_k) + 1 ) \rfloor
		\geq  \lfloor p_1 \cdots p_{i-1}(\beta_{j_i} + 1)\rfloor \\
\im
& \lfloor (\beta_{j_{i-1}}- \tilde q_k/ (p_1 \cdots p_{i-1}\tilde p_k) + 1 )p_1/q'_1 \rfloor
		\geq  \lfloor (\beta_{j_i} + 1)p_1/q'_1\rfloor
		\geq \momega
\end{align*}
as required. \\

Now consider Case \ref{case-line}. Since $E_0$ does {\em not} get contracted, it follows from the arguments in the proof of \cref{fractional-claim} that $p_1/(p_1 - q'_1) > 2$. Identity \eqref{momega-bound-1} then implies that $\momega \leq 1 = m_E$. 
\end{proof}

Adopt the notation of \cref{m-lemma}. Let $\aut_X(\bar X)$ (resp.\ $\aut_X(\bar X^{\min} )$ be the set of automorphisms of $\bar X$ (resp.\ $\bar X^{\min} $) that fix $X$. 

\begin{thm} \label{min-aut}
\mbox{}
\begin{enumerate}
\item \label{lift} Every automorphism $F$ of $\bar X$ lifts to an automorphism of $\bar X^{\min}$ and $F(E) = E$ for every exceptional curve of $E$ of $\sigma$. 
\item \label{descend} Every automorphism of $\bar X^{\min}$ that fixes $X$ descends to an automorphism of $\bar X$.
\item \label{fixed-iso} $\sigma$ induces an isomorphism $\aut_X(\bar X ) \cong \aut_X(\bar X^{\min} )$. 
\end{enumerate}
\end{thm}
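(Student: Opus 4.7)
The plan is to establish parts (1) and (2) separately, with (3) following formally from them.

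For (1), the lift $\tilde F$ of $F \in \aut(\bar X)$ exists by the universal property of the minimal resolution of singularities on surfaces: both $\sigma$ and $F \circ \sigma \colon \bar X^{\min} \to \bar X$ are resolutions of $\bar X$ whose exceptional divisors contain no $(-1)$-curves, so they differ by a unique $\bar X$-isomorphism, giving $\tilde F$; running the construction on $F^{-1}$ and composing shows $\tilde F$ is an automorphism. For $\tilde F(E) = E$, I would argue that $\tilde F$ induces a weighted automorphism of the dual graph of $\sigma$ described in \cref{primitive-resolution-thm}, augmented by $C_\infty^{\min}$ attached at the left end. The chain structure, with generally distinct $\Delta$-values $|q'_1|, \ldots, |q'_{l+1}|, p_1, \ldots, p_l$, combined with the distinguished role of $C_\infty^{\min}$ (the unique non-contracted boundary curve), leaves no room for a nontrivial weighted graph automorphism.

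For (2), given $G \in \aut_X(\bar X^{\min})$, I plan to show that $G|_X \in \aut(\cc^2)$ has the triangular form described in \cref{aut} for elements of $\aut(\bar X)$. Once this is established, $G|_X$ is the restriction of some $F \in \aut(\bar X)$, and the lift $\tilde F$ furnished by (1) agrees with $G$ on the dense open set $X$, so $\tilde F = G$, meaning $G$ descends through $\sigma$ to $F$. The key step is to convert the geometric statement that $G|_X$ extends across each exceptional curve $E$ of $\sigma$ into an algebraic constraint on $G|_X$. For each such $E$, the divisorial valuation $\nu_E$ is captured by the formal descending Puiseux series $\tilde \phi_E$ appearing in observation \ref{descending-puiseux-observation}, and applying the theory of \cite{sub2-2} to the auxiliary primitive compactification of $\cc^2$ whose curve at infinity realises $\nu_E$ gives via \cref{aut} the constraint $\deg f \leq m_E$ on the polynomial $f$ in the triangular form of $G|_X$, where $m_E$ is as in \eqref{m_E}. \Cref{m-lemma} is what makes this work: since $m_E \geq m_{\vec\omega}$ for every exceptional $E$, the strongest constraint comes from $C_\infty^{\min}$ itself, namely $\deg f \leq m_{\vec\omega}$, which is precisely the \cref{aut} form.

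Part (3) then follows formally: the lift map $\aut_X(\bar X) \to \aut_X(\bar X^{\min})$ from (1) is injective because $\sigma$ is surjective (the lift of $\id$ is $\id$) and surjective by (2). The main obstacle I anticipate is making precise in (2) the identification of ``extending $G|_X$ across an exceptional $E$'' with ``extending across $C_\infty$ in an auxiliary primitive compactification with $E$ at infinity'', which requires chasing through the case distinctions of observations \ref{corner-observation}--\ref{interior-observation} and the branches of formula \eqref{m_E}. With this in hand, the rest of (2) reduces to a compact application of \cref{aut} together with \cref{m-lemma}.
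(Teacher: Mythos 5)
Your treatment of part (2) and the formal deduction of part (3) essentially coincide with the paper's: the paper likewise reduces (2) to the statement that an automorphism of $\bar X^{\min}$ fixing $X$ restricts on $X$ to a triangular map whose polynomial part satisfies the degree bound imposed by each boundary valuation, with the binding bound coming from $C_\infty^{\min}$, and then invokes the extension criteria of \cite[Theorems 4.9 and 5.2]{sub2-2}. Your universal-property construction of the lift in part (1) is correct and is genuinely different from (and cleaner than) the paper's route: the paper instead verifies directly, via \cref{m-lemma} and \cite[Theorem 4.9]{sub2-2}, that $F|_X$ extends across every exceptional curve, and must treat $\bar X\cong\pp^2(1,1,p)$ separately because there $F$ need not preserve $X$. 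Note that your approach makes \cref{m-lemma} unnecessary for the existence of the lift, and in part (2) it is likewise not really needed (only the constraint from $C_\infty^{\min}$ is relevant for descending); in the paper the real job of \cref{m-lemma} is to make the lift in (1) exist, a job your universal-property argument does for free.

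The genuine gap is in your argument for $F(E)=E$. ``The chain structure, with generally distinct $\Delta$-values \dots leaves no room for a nontrivial weighted graph automorphism'' is an assertion, not a proof, and it is not automatic: the classification (see \cref{t-table,c-table}) contains configurations with repeated weights --- for instance graphs in which two legs attached to the same central vertex are each a single $(-2)$-vertex, or weight-palindromic chains of $(-2)$-curves --- so the weighted dual graph can a priori admit a nontrivial automorphism; moreover, when $F$ does not preserve $X$ (the $\pp^2(1,1,p)$ case) the lift $\tilde F$ need not fix $C_\infty^{\min}$, so the ``distinguished vertex'' anchoring your argument is unavailable. You would have to either check case by case that no symmetric configuration occurs with $C_\infty^{\min}$ attached at a symmetric position, or replace the combinatorial argument by the paper's valuation-theoretic one: each exceptional curve $E$ determines a divisorial valuation whose associated key sequence is in normal form (observation \ref{descending-puiseux-observation}), these key sequences are pairwise distinct, and the uniqueness of normal forms \cite[Theorem 4.6]{sub2-2} then forces $\tilde F(E)=E$. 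Relatedly, the step you defer in part (2) --- identifying ``$G|_X$ extends across $E$'' with the bound $\deg f\le m_E$ via an auxiliary primitive compactification with $E$ at infinity --- requires knowing that the key sequence of $E$ is primitive and in normal form before \cref{aut} can be applied to it; this is exactly what the paper extracts from observation \ref{descending-puiseux-observation}, so that verification should be made explicit rather than treated as bookkeeping.
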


\begin{proof}
Let $F$ be an automorphism of $\bar X$. If $F(X) = X$, then \cref{m-lemma} and \cite[Theorem 4.9]{sub2-2} imply that assertion \eqref{lift} holds for $F$. 
If $\bar X$ is not isomorphic to a weighted projective space of the form $\pp^2(1,1, p)$, then \cite[Proposition 5.1]{sub2-2} implies that every automorphism of $\bar X$ fixes $X$, so that assertion \eqref{lift} holds for $\bar X$. Now assume $\bar X \cong \pp^2(1,1,p)$. Since $\vec \omega$ is in the normal form, this implies (due to \cite[theorem 5.2]{sub2-2}) that $\vec \omega = (p,1)$. It is then straightforward to see that there is only one irreducible exceptional curve $E$ of $\sigma$ and the order of pole of a polynomial $f$ along $E$ is precisely $\deg_y(f)$. \cite[Theorems 4.9 and 5.2]{sub2-2}) then imply that assertion \eqref{lift} holds for $\bar X$. \\

Since $\vec \omega$ is in the normal form, observation \ref{descending-puiseux-observation} implies that for every irreducible exceptional curve of $E$ of $\sigma$, the `key sequence' of the pole along $E$ is in the `normal form' (in the sense of \cite[section 4]{sub2-2}) with respect to $(x,y)$-coordinates; moreover, the key sequences are distinct for distinct (irreducible) exceptional curve. \cite[Theorem 4.6]{sub2-2} then implies that $F(E) = E$ for every irreducible exceptional curve of $E$ of $\sigma$. Assertion \eqref{descend} then follows from \cite[theorems 4.9 and 5.2]{sub2-2}. Assertion \eqref{fixed-iso} is a consequence of the assertions \eqref{lift} and \eqref{descend}. 
\end{proof}

\bibliographystyle{alpha}
\bibliography{../../../utilities/bibi}


\end{document}